\documentclass{article}

\usepackage[english]{babel}

\usepackage[letterpaper,top=2.5cm,bottom=2.5cm,left=2.12cm,right=2.12cm,marginparwidth=1.75cm]{geometry}

\usepackage{amsmath, amssymb}
\usepackage{amsthm}
\usepackage{graphicx}
\usepackage{xcolor}
\usepackage{booktabs}
\usepackage[most]{tcolorbox}
\usepackage[table]{xcolor} 
\usepackage{colortbl}  
\usepackage{cancel}
\usepackage{float}
\newtheorem{example}{Example}[section]
\newtheorem{theorem}{Theorem}[section]

\newtheorem{lemma}[theorem]{Lemma}
\newtheorem{definition}{Definition}[section]

\newcommand{\Aut}{\operatorname{Aut}}

\usepackage{amsmath}
\usepackage{bbm}
\usepackage{graphicx}
\newtheorem{proposition}{Proposition}[section]
\newtheorem{remark}{Remark}[section]
\usepackage[colorlinks=true, allcolors=blue]{hyperref}
\numberwithin{equation}{section}

\AtBeginEnvironment{theorem}{\setlength{\parindent}{0pt}}
\AtBeginEnvironment{lemma}{\setlength{\parindent}{0pt}}
\AtBeginEnvironment{proposition}{\setlength{\parindent}{0pt}}
\AtBeginEnvironment{definition}{\setlength{\parindent}{0pt}}
\AtBeginEnvironment{remark}{\setlength{\parindent}{0pt}}
\AtBeginEnvironment{proof}{\setlength{\parindent}{0pt}}

\usepackage[most]{tcolorbox}
\tcbuselibrary{theorems}

\newtcbtheorem[number within=section]{framedtheorem}{Theorem}%
{colback=blue!5!white, colframe=blue!50!black, fonttitle=\bfseries}{th}

\title{Dependencies in Multiplex Networks: A Motif Count Approach}

\author{
Karl Sawaya\thanks{Institute of Mathematics, EPFL. Email: first.last@epfl.ch}
\hspace{1cm}
Sofia Olhede\footnotemark[1]
}
\begin{document}
\maketitle

\begin{abstract}
Multiplex networks are a powerful framework for representing systems with multiple types of interactions among a common set of entities. Understanding their structure requires statistical tools capturing higher-order cross-layer correlations. We develop a comprehensive framework for estimating and testing dependence in exchangeable multiplex networks through motif counts. We first propose a moment-based estimation methodology that extends the multi-layer stochastic block model network histogram to arbitrary motif counts. This allows us to estimate the $2^d-1$ graphons defining a $d$-layer multiplex network. We then derive the joint asymptotic distribution of cross-layer motif counts, that is aligned motifs shared across layers. Extending existing results from the unilayer setting, we show that the limiting distribution in the motif-regular case exhibits a covariance structure involving minimum-based distances between graphons. Finally, we construct hypothesis tests to detect inter-layer similarity and dependence. This work provides a rigorous extension of motif-count asymptotics and inference procedures to the multiplex setting, providing new tools to study high-order dependencies in complex networks. 
\end{abstract}

\noindent\textbf{Keywords:} Multiplex Network $\cdot$ Graphon $\cdot$  Graph inference $\cdot$  Motif Counts $\cdot$  Limits for large graphs

\section{Introduction}
Multiplex networks are a data structure used to represent multiple sets of relations between the same objects or nodes, that are assumed to be aligned \cite{Battiston2017, Bianconi2018}. They arise in many domains, including ecology \cite{PilosofPorterPascualKefi2017}, neuroscience \cite{Vuksanovic2025}, social systems, and finance \cite{BardosciaBianconiFerrara2019, XieJiaoLi2022}. As a generalization of simple graphs, multiplex networks allow several types of edges between nodes and can be modelled by using decorated graph limits~\cite{DufourOlhede2024,LovaszSzegedy2010}, see also~\cite{ChandnaJansonOlhede2022,SkejaOlhede2024}, using the multivariate Bernoulli distribution~\cite{Teugels1990}. In this paper, we shall couple the theory of exchangeable arrays with the multivariate Bernoulli distribution. 

By representing systems through multiple layers of interactions, multiplex networks provide a natural framework for studying interdependencies across graphs. Characterizing such dependencies has become an important topic and has motivated the development of diverse modeling approaches \cite{ChandnaJansonOlhede2022, Kolaczyk2009, PamfilHowisonPorter2020, SkejaOlhede2024}.
Here, we focus on capturing higher-order dependencies in multiplex exchangeable networks through a subgraph count-based approach.

 Our contributions are as follows. We shall start this manuscript by recalling the parameterization of a multivariate graph limit, starting from the multivariate graphon specification advocated by~\cite{ChandnaJansonOlhede2022,SkejaOlhede2024} and also used by~\cite{Ganguly2025}. 
This paper will be aimed at estimating the multivariate graph limit and determining the properties of statistics derived from the multiplex network. For this reason we want to extend non-parametric estimation methods for single graphs to the estimation of multiplex networks in a way that allows us to estimate the model parameters of the multivariate graph limit model. We shall first approximate the observed multiplex network as a Multiplex Stochastic Block Model (MSBM), and discuss the properties of such estimation.
Another popular estimation method for network models is method of moments~\cite{BickelChenLevina2011}, namely calculating the prevalence of small subgraphs or motifs. We shall discuss the extension of this method to multiplex networks, and introduce new types of network motifs adapted to multiplex networks based on edges of multiple layers of the network, that we will refer to as cross-layer motifs. We shall derive the asymptotic behaviour of these motif counts when collected in a vector, and generalize the assumption of $F$-regularity to this setting~\cite{Bhattacharya2025}. From the asymptotic distribution, we will be able to design hypothesis tests aimed at uncovering multivariate network structure and assessing dependence across layers.

 Throughout this paper, an exchangeable graph refers to a random network whose distribution is invariant under permutations of its nodes \cite{LauritzenRinaldoSadeghi2018, OrbanzRoy2014}. A network motif denotes a subgraph pattern that may reveal structural properties of the network \cite{Alon2007,RibeiroSilvaKaiser2009}.

 The Aldous--Hoover theorem \cite{Aldous1981, Hoover1982, Kallenberg1989} states that any exchangeable random graph can be generated by sampling independent uniform variables \(\{U_i\}\) on \([0,1]\), and connecting each pair of latent nodes \((U_i, U_j)\) independently with probability \(W(U_i, U_j)\). The symmetric measurable function \(W : [0,1]^2 \to [0,1]\), known as a \emph{graphon}, defines the generative mechanism of the network. Graphons also arise as limits of sequences of dense graphs \cite{DiaconisJanson2008, Lovasz2012, Lovasz2006}. In this framework, edges are Bernoulli random variables whose parameter depend on the graphon. Since multiplex networks consists of several graphs defined on the same vertex set, it is natural to extend this framework by combining graph limit theory \cite{Lovasz2012} with the multivariate Bernoulli distribution \cite{FontanaSemeraro2018, Teugels1990}. A multiplex network with \( d \) layers is characterized by \( 2^d - 1 \) parameters, each corresponding to the joint probability of edge formation across different layer combinations. The collection of these parameters defines the multivariate graph limit of the multiplex network.

There has been considerable interest in graphon estimation \cite{GaoLuZhou2015}. A classical approach constructs a network histogram by fitting a Stochastic Block Model (SBM) \cite{OlhedeWolfe2014} (see also Appendix~\ref{append:SBM} for more details), for which convergence rates have been established \cite{GaoLuZhou2015,KloppTsybakovVerzelen2017}.  Other methods include local estimation with covariates \cite{ChandnaOlhedeWolfe2022}, the stochastic shape model \cite{VerdeymeOlhede2024}, and the graph pencil method \cite{Gunderson2024}. 
For multiplex networks, Pamfil et al. \cite{PamfilHowisonPorter2020} fitted a correlated multiplex stochastic block model to estimate the \(2^d - 1\) parameters. 
However, stochastic block models estimate connectivity matrices using only edge counts, corresponding to first-order subgraph densities. From the perspective of graph limit theory, similarity between dense graphs is naturally measured by comparing all finite subgraph densities, as formalized by the subgraph distance of Bollobás and Riordan \cite{BollobasRiordan2009}. In particular, two graphs that have similar motif counts at all scales are close in subgraph distance, and this notion of convergence is equivalent (up to measure-preserving transformations) to convergence in the cut metric (see \cite[Corollary 2.3] {BollobasRiordan2009}). In the multiplex setting, this motivates the introduction of cross-layer motifs, which are subgraph patterns shared across a subset of multiple layers. We therefore extend the framework of \cite{PamfilHowisonPorter2020} by allowing estimation based on arbitrary cross-layer motif counts and integrate the MSBM with the network method of moments introduced in \cite{BickelChenLevina2011}. Within this framework, we introduce alternate bichromatic motifs that allow estimation of off-diagonal connectivity parameters through moment equations.

In the second part of the paper, we study the distribution of motif counts in multiplex networks. Subgraph counts are powerful statistics for testing network models \cite{MaugisOlhedePriebeWolfe2020,MiloShenOrrItzkovitzKashtanChklovskiiAlon2002,ShenOrr2002}. They act as network moments. Counting motifs within individual layers or shared across layers provides insight into the dependence structure of the multiplex network.

The asymptotic behavior of subgraph counts has been extensively studied. In the Erdős--Rényi model, asymptotic normality was established using various techniques \cite{Janson1994,NowickiWierman1988,Nowicki1989,Rucinski1988}. This was later generalized by \cite{BickelChenLevina2011}, who proved asymptotic normality of subgraph counts in exchangeable graphon-based models with normalization by a factor of \( n \). Other results include Poisson approximations for the number of subgraphs in the stochastic block model that are isomorphic to strictly balanced graphs \cite{CoulsonGauntReinert2016}, and joint Gaussian convergence for collections of homomorphism densities \cite{DelmasDhersinSciauveau2021}.

More recently, \cite{BhattacharyaChatterjeeJanson2023} showed that the Gaussian limit in \cite{BickelChenLevina2011} may become degenerate when the graphon satisfies a regularity condition with respect to the motif. In that case, normalization by \(n^{1/2}\) yields a limit consisting of a Gaussian component plus an independent non-Gaussian term given by an infinite weighted sum of centered chi-squared variables.
These results were extended by \cite{Bhattacharya2024} to the joint distribution of counts of multiple motifs within a single exchangeable graphon-based network. They showed that for subgraphs where the graphon is regular, each marginal distribution consists of two independent components: a Gaussian term and a bivariate stochastic integral. In contrast, for subgraphs where the graphon is irregular, the marginals are purely Gaussian and are expressed as linear stochastic integrals. 

In this paper, we extend the framework of~\cite{Bhattacharya2024} to the study of cross-layer motif counts in exchangeable multiplex networks. We derive the joint asymptotic distribution of such counts. We first establish this result for the multiplex Erdős–Rényi model, showing convergence to a multivariate Gaussian distribution whose covariance depends on minimum-based distances between the parameters. We then extend the result to the exchangeable setting under the assumption that the multiplex network converges in the joint cut metric to a multivariate graphon (see~\cite{Bhattacharya2025}).
We show that mixed regimes may arise, where some parameters satisfy regularity conditions with respect to the motif while others do not. For the irregular case, we show that the marginals converge to linear stochastic integrals. In contrast, in the regular case the limiting distribution consists of two independent components: a multivariate Gaussian term, whose covariance structure involves minimum-based distances between graphons, and a bivariate stochastic integral.
From these asymptotic results, we construct confidence intervals for expected cross-layer motif counts and propose two hypothesis tests. First, we develop a test assessing inter-layer structural similarity by comparing expected subgraph densities across layers. Finally, we provide a test of edge-wise independence between the layers of a MSBM by testing block-by-block edge-wise independence.

This paper is organized as follows. In Section~\ref{sec:Notations}, we introduce the necessary notation and briefly review graph limit theory~\cite{Lovasz2012}. In Section~\ref{sec:subgraphcounts}, we define the fundamental objects of cross-layer motifs. Section~\ref{sec:Graphon estimation} presents the methodology for estimating the multivariate graph limit by combining the MSBM with a network method of moments. In Section~\ref{sec:Asymptotics}, we establish the asymptotic distribution results. Finally, Section~\ref{sec:Hypo} is devoted to the hypothesis tests.

\section{Notations and Preliminaries}
\label{sec:Notations}
We begin by introducing the notation and basic concepts that will be used throughout this paper.

\subsection{Basic Notation}

Let $G_n := (V(G_n), E(G_n))$ a simple graph on $n$ vertices, where the vertex set is \(V(G_n) = [n] := \{1,2,\ldots,n\}\) and the edge set is \(E(G_n)\). The number of vertices and edges of \(G_n\) are denoted by \(|V(G_n)| = n\) and \(|E(G_n)|\), respectively. The adjacency matrix of \(G_n\) is written as \(A(G_n) = (a_{ij}(G_n))_{1 \leq i,j \leq n}\).

\noindent
A simple graph is an undirected, unweighted graph without loops or multiple edges. The complete graph on $n$ vertices, denoted by $K_n$, is the graph where every pair of distinct vertices is connected by an edge. For any subgraph $F \subseteq G,$ let $X_F(G_n)$ denote the number of injective copies of $F$ in the graph $G_n$. 

\noindent
We write $\mathrm{Tr}(\cdot)$ for the operator trace on matrices and $\mathrm{Aut}(\cdot)$ for the automorphism group of a graph. 

\subsection{Multivariate Graphon Framework}

We first state the definition of a multiplex network.  

\begin{definition}[Multiplex Network \cite{Ganguly2025}]
\label{def:multiplexnetwork}
For an integer $d \geq 1$, we denote a $d-$layer multiplex network on $n$ nodes by the collection of simple graphs $\mathbf{G} := (G_n^{(1)}, \ldots, G_n^{(d)})$ sharing the same vertex set $V(\mathbf{G}) =[n]$. Each graph $G_s, ~ s\in [d]$ has edge set $E(G_s)$ and vertex $V(\mathbf{G})$ and we denote by $A^{(s)}$ its adjacency matrix. 
\end{definition}

\begin{remark}[Multiplex network as a decorated graph]
\label{rem:decoratedmultiplex}
A multiplex network can equivalently be represented as a single decorated graph, where each edge is endowed with a multivariate label encoding the presence or absence of that edge across the \(d\) layers. This framework, which we briefly recall here, has been developed by \cite{DufourOlhede2024} and \cite{KunszentiKovacsLovaszSzegedy2022}. 

Let \(\mathcal{K}\) be a compact set. A \(\mathcal{K}\)-decorated graph is a simple graph in which each edge \((i,j)\) is assigned (or decorated with) an element \(x_{ij} \in \mathcal{K}\). The set \(\mathcal{K}\) contains a neutral element \(0_\mathcal{K}\), with edges decorated by \(0_\mathcal{K}\) interpreted as absent.
For example, a $2$-layer multiplex network can be modeled as a \(\mathcal{K}\)-decorated graph with
\[
\mathcal{K} = 
\left\{
\begin{pmatrix} 0 \\ 0 \end{pmatrix}, 
\begin{pmatrix} 1 \\ 0 \end{pmatrix}, 
\begin{pmatrix} 0 \\ 1 \end{pmatrix}, 
\begin{pmatrix} 1 \\ 1 \end{pmatrix}
\right\},
\]
where each vector encodes the exclusive presence of an edge in one or both layers. For instance, an edge \((i,j)\) decorated with \(x_{ij} = \begin{pmatrix} 1 \\ 0 \end{pmatrix}\) is present exclusively in layer 1.

More generally, a multiplex network with \(d > 2\) layers can be modeled as a \(\mathcal{K}\)-decorated graph where
$
\mathcal{K} = \{0,1\}^d
$
is the set of all \(2^d\) binary vectors of dimension \(d\). One can associate each non-zero decoration to a distinct color and thus view a $d$-layer multiplex network as an edge-colored graph with \(2^d - 1\) colors.
\end{remark}

Before defining the model that we study in this paper, that is the multiplex exchangeable network, we first define the basic model, that is the multiplex Erd\H{o}s--R\'enyi framework.  

\begin{definition}[Multiplex Erd\H{o}s--R\'enyi network]
\label{def:MER}
Let $d \geq 1$ and $n \geq 1$ be integers. A $d$-layer multiplex network on $n$ nodes, $
\mathbf{G}_n := \big(G_n^{(1)}, \ldots, G_n^{(d)}\big),
$
is called a multiplex Erd\H{o}s--R\'enyi (MER) network if, for every unordered pair 
$1 \leq i < j \leq n$, the vector of edge indicators
$
\underline{A}_{ij} := \big(A_{ij}^{(1)}, \ldots, A_{ij}^{(d)}\big)
$
is independent across pairs $(i,j)$ and satisfies
\begin{equation}
\label{eq:multialdousER}
\underline{A}_{ij} \;\overset{\mathrm{ind}}{\sim}\; 
\mathrm{MultBernoulli}_d(\mathbf{p}),
\end{equation}
where $\mathrm{MultBernoulli}_d$ denotes the $d$-variate Bernoulli distribution (see Appendix~\ref{append:multibern} for more details) specified by the collection of $2^d - 1$ parameters
\begin{equation}
\label{eq:multiparametersER}
\mathbf{p}
=
\Big(
p^{(1)}, \ldots, p^{(d)}, 
p^{(12)}, \ldots, p^{(d-1\,d)}, 
\ldots,
p^{(1\cdots d)}
\Big)^{\!\top}.
\end{equation}
\end{definition}

In this work, we focus on a more general framework, that is, dense exchangeable multiplex networks. To formalize this, we first recall the notion of exchangeability for random graphs and the Aldous-Hoover theorem. 

\begin{definition}[Joint exchangeability \cite{DiaconisJanson2008}]
A random array \((A_{ij})_{i,j \ge 1}\) is \emph{jointly exchangeable} if for any permutation \(\pi\) of \(\mathbb{N}^*\),
\[
(A_{ij}) \;\overset{d}{=}\; (A_{\pi(i)\pi(j)}),
\]
where \(\overset{d}{=}\) denotes equality in distribution.  
A random graph \(G\) is said to be \emph{exchangeable} if its adjacency matrix forms a jointly exchangeable array.
\end{definition}

Joint exchangeability is a powerful statement since it allows us to define an exchangeable graph via the Aldous-Hoover theorem. 

\begin{theorem}[Aldous-Hoover \cite{Aldous1981}]
\label{thm:aldous}
Let \(A = (A_{ij})\) be a jointly exchangeable random array. Then there exists an i.i.d. sequence \(\xi = (\xi_1, \dots, \xi_n)\) with \(\xi_i \sim \mathbf{U}[0,1]\), a random variable \(\gamma \sim \mathbf{U}[0,1]\) independent of \(\xi\), and a measurable function \(W: [0,1]^3 \to [0,1]\) such that
\[
A_{ij} \mid \xi, \gamma \;\overset{\text{ind}}{\sim}\; \text{Bernoulli}\{ W(\xi_i, \xi_j, \gamma) \}.
\]
If \(A\) is the adjacency matrix of a random graph \(G_n\) and \(G_n\) is \emph{disassociated}, then the dependence on \(\gamma\) can be omitted.
\end{theorem}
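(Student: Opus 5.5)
The plan is to follow the classical route through de Finetti-type arguments for two-dimensional arrays, as in Aldous and Kallenberg. First, I would reduce to a representation by i.i.d. uniforms indexed by subsets of size at most two. Specifically, I would show that any jointly exchangeable array $(A_{ij})$ on $\mathbb{N}$ has the same law as $f(\gamma,\xi_i,\xi_j,\zeta_{\{i,j\}})$, where $\gamma$, $(\xi_i)$ and $(\zeta_{\{i,j\}})$ are independent $\mathbf{U}[0,1]$ and $f$ is measurable (symmetric in $\xi_i,\xi_j$ in the undirected case).

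The key step is this representation. I would obtain it by embedding the array into a larger exchangeable array indexed by $\mathbb{Q}$ or by $\mathbb{N}\times\mathbb{N}$, so that the index set is doubled. Conditioning on the tail and exchangeable $\sigma$-fields then identifies $\gamma$ as a uniform coding of the directing random measure, via the ergodic decomposition. Next, I would use conditional independence arguments (coding lemma, transfer theorem) to peel off, for each vertex $i$, a uniform $\xi_i$ that generates the conditional law of row $i$ given the rest. The residual randomness of each pair is then coded by $\zeta_{\{i,j\}}$.

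Once the representation holds, the Bernoulli form is immediate. Set $W(x,y,u):=\mathbb{P}\big(f(u,x,y,\zeta)=1\big)$, with $\zeta\sim\mathbf{U}[0,1]$. Conditionally on $(\xi,\gamma)$ the variables $f(\gamma,\xi_i,\xi_j,\zeta_{\{i,j\}})$ are independent across pairs, because the $\zeta$'s are independent. Each is Bernoulli with parameter $W(\xi_i,\xi_j,\gamma)$, and $W$ is measurable by Fubini.

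For the disassociated clause, note that disassociation means that restrictions to disjoint vertex sets are independent. I would argue that this forces the law to be extremal, i.e.\ ergodic, among exchangeable laws. A Hewitt–Savage/0-1 argument on the exchangeable $\sigma$-field then shows that the conditional law given $\gamma$ is a.s.\ constant. Concretely, $\gamma\mapsto W(\cdot,\cdot,\gamma)$ is a.s.\ equivalent, up to measure-preserving maps, to a fixed graphon, so one may fix a typical $\gamma_0$ and drop the dependence on $\gamma$.

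I expect the main obstacle to be the representation step. Constructing the vertex variables $\xi_i$ so that the pair variables become conditionally independent needs a delicate martingale and conditional-independence argument. This is the core of Aldous's proof, and in a paper like this it would realistically be cited rather than reproved.
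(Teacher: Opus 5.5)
The paper does not prove this theorem itself; it quotes it from Aldous (1981) and uses it as a black box. Your outline is precisely the standard argument behind that citation, so it is correct: the representation $A_{ij}=f(\gamma,\xi_i,\xi_j,\zeta_{\{i,j\}})$ for the infinite symmetric array, then $W(x,y,u)=\mathbb{P}(f(u,x,y,\zeta)=1)$, then dissociation $\Rightarrow$ ergodicity $\Rightarrow$ an a.s.\ constant conditional law given $\gamma$. In the last step you do not need graphon uniqueness: pick $\gamma_0$ in the full-measure set where the conditional law given $\gamma$ equals the law of $A$, then use the transfer theorem to realize the latent $\xi$ (and $\zeta$) on an extension of the space carrying $A$.
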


We are now ready to define an exchangeable graph. 

\begin{definition}[Exchangeable graph \cite{SkejaOlhede2024}]
\label{def:exchangeablegraph}
Let \(G = (V,E)\) be a random graph with \(|V| = n\) and adjacency matrix \(A = (A_{ij})\). In this context, the latent variable \(\xi_i\) is associated with vertex \(i\), and the function \(W(\xi_i, \xi_j)\) from theorem \label{thm:aldous} is the graphon generating the exchangeable graph \(G(n,W)\). In particular, \(W(\xi_i, \xi_j)\) represents the probability that an edge exists between vertices \(i\) and \(j\).
\end{definition}

\begin{remark}
For dense exchangeable graphs, the parameter \(W\) of Definition \eqref{def:exchangeablegraph} is called a graphon and is the limit object of a sequence of dense graphs.  
This graph limit theory, developed by {\cite{Lovasz2012}}, provides a functional representation for sequences of large graphs converging in the sense of homomorphism densities.
\end{remark}

A \(d\)-layer \emph{exchangeable multiplex network} is defined as a collection of \(d\) exchangeable graphs \((G_n^{(1)}, \ldots, G_n^{(d)})\) sharing the same latent vertex variables.  
The joint layer dependence is captured through the joint distribution of the sequence  $(A_{ij}^{(k)})_{k \in [d]}$ of coupled Bernoulli variables, described by the following multivariate extension of the Aldous–Hoover theorem.

\begin{theorem}[Multivariate Aldous-Hoover \cite{SkejaOlhede2024}]
\label{thm:mutlialdous}
For $d$ exchangeable graphs $G_1(n,W^{(1)}), \dots, G_d(n,W^{(d)})$ on $n$ nodes, with adjacency matrices $A^{(1)}, \dots, A^{(d)}$, define
$$
\underline{A}_{ij} = \begin{bmatrix} A_{ij}^{(1)} & \dots & A_{ij}^{(d)} \end{bmatrix}^T
$$
as the vector of observed edges between nodes $i$ and $j$. Then there exists an i.i.d. sequence \(\xi = (\xi_1, \dots, \xi_n)\) with \(\xi_i \sim \mathbf{U}[0,1]\) such that 
\begin{equation}
\label{eq:multialdous}
\underline{A}_{ij} \mid \xi \;\overset{\text{ind}}{\sim}\; \text{MultBernoulli}_d\big(\underline{W}(\xi_i, \xi_j)\big),
\end{equation}
where $\text{MultBernoulli}_d$ denotes the $d$-variate Bernoulli distribution, specified by $2^d - 1$ parameters that capture all cross-dependencies, collected in
\begin{equation}
\label{eq:multiparameters}
\underline{W}(\cdot) = \begin{bmatrix} W^{(1)}(\cdot) & \dots & W^{(d)}(\cdot) & W^{(12)}(\cdot) & \dots & W^{(d-1\,d)}(\cdot) & \dots & W^{(1 \cdots d)}\end{bmatrix}^T.
\end{equation}
\end{theorem}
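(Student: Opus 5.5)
The plan is to derive Theorem~\ref{thm:mutlialdous} from the univariate Aldous--Hoover theorem (Theorem~\ref{thm:aldous}) applied to a single array with values in the finite set $\mathcal{K}=\{0,1\}^d$. Remark~\ref{rem:decoratedmultiplex} already identifies a multiplex network with such a decorated graph. We implicitly assume that the $d$ graphs are \emph{jointly} exchangeable, meaning that the vector array $(\underline{A}_{ij})$ satisfies $(\underline{A}_{ij})\overset{d}{=}(\underline{A}_{\pi(i)\pi(j)})$ for every permutation $\pi$, with the same $\pi$ acting on all layers. We also assume that the multiplex array is dissociated. Marginal exchangeability of each layer separately would not suffice, so these hypotheses should be stated at the outset.

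\textbf{Step 1: representation.} Encode $\underline{A}_{ij}$ as a $\mathcal{K}$-valued symmetric exchangeable array. Aldous--Hoover for arrays taking values in a Polish space gives a measurable $f$ and i.i.d.\ uniforms $\xi_i$ and $\eta_{ij}=\eta_{ji}$ such that
\[
\underline{A}_{ij}\overset{d}{=}f(\xi_i,\xi_j,\eta_{ij}).
\]
Dissociation removes the global variable $\gamma$, exactly as in the last sentence of Theorem~\ref{thm:aldous}, and $f$ may be taken symmetric in its first two arguments. The statement of Theorem~\ref{thm:aldous} covers only binary arrays. If we want to use it as stated, we reduce the $\mathcal{K}$-valued case to it by a standard coding of $\mathcal{K}$ into $[0,1]$, or we simply cite the general Aldous--Hoover/Kallenberg version for Polish-valued arrays.

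\textbf{Step 2: conditional law.} Conditional on $\xi$, the variables $\eta_{ij}$, $i<j$, are independent. Hence the vectors $\underline{A}_{ij}=f(\xi_i,\xi_j,\eta_{ij})$ are conditionally independent across pairs. Each has a law on $\mathcal{K}$ determined by $(\xi_i,\xi_j)$, namely $q_x(u,v)=\Pr(f(u,v,\eta)=x)$ for $x\in\mathcal{K}$, and this law is a multivariate Bernoulli distribution.

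\textbf{Step 3: parametrization.} Define, for each nonempty $S\subseteq[d]$,
\[
W^{(S)}(u,v)=\Pr\big(A^{(s)}=1\ \forall s\in S\mid \xi_i=u,\xi_j=v\big)=\sum_{x\in\mathcal{K}:\,x_s=1\,\forall s\in S} q_x(u,v).
\]
By M\"obius inversion (inclusion--exclusion over supersets), the $2^d$ cell probabilities $q_x$ are recovered from the $2^d-1$ functions $W^{(S)}$ together with $\sum_x q_x=1$. Therefore the conditional law is exactly $\mathrm{MultBernoulli}_d(\underline{W}(\xi_i,\xi_j))$ with $\underline{W}$ as in \eqref{eq:multiparameters}. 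Each $W^{(S)}$ is measurable and symmetric, and the marginal $W^{(s)}$ is the graphon of layer $s$, consistent with Definition~\ref{def:exchangeablegraph}.

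\textbf{Main obstacle.} The delicate point is Step~1. We must justify applying Aldous--Hoover to vector-valued arrays, or equivalently show that a single latent sequence $\xi$ serves all layers simultaneously. This is exactly where joint exchangeability of the vector array is needed, not merely exchangeability of each layer. My first approach would be to cite Kallenberg's general version for arrays valued in a Borel space. As a fallback, I would apply Theorem~\ref{thm:aldous} to the scalar array $\sum_s 2^{s-1}A^{(s)}_{ij}$, viewed as a finite-valued exchangeable array, and observe that the proof of that theorem does not use the binary range.
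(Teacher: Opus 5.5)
The paper does not prove this statement. It is imported from \cite{SkejaOlhede2024}, and the text only points to the ingredients: the decorated-graph view of Remark~\ref{rem:decoratedmultiplex} and the Teugels representation, Theorem~\ref{thm:teugels}. There is therefore no in-paper argument to compare yours with. Your argument assembles exactly those ingredients, and it is correct:
\begin{itemize}
\item The general Aldous--Hoover representation, for symmetric, jointly exchangeable, dissociated arrays with values in the finite space $\mathcal{K}=\{0,1\}^d$, gives $\underline{A}_{ij}=f(\xi_i,\xi_j,\eta_{ij})$.
\item Conditional independence across pairs is then immediate.
\item Your Step~3 is \eqref{eq:pn-mu} applied pointwise in $(u,v)$, with the $W^{(S)}$ as the moments and $W^{(\emptyset)}\equiv 1$.
\end{itemize}
Your fallback via the scalar coding $\sum_s 2^{s-1}A^{(s)}_{ij}$ is not an independent reduction, because Theorem~\ref{thm:aldous} as printed is just the binary case of the general theorem. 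Citing the Borel-space version directly is the right move.

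You are also right to strengthen the hypotheses, and the strengthening is forced. The conclusion itself makes the vector array jointly exchangeable and dissociated, yet neither property follows from properties of the individual layers. Take both layers to be $G(\infty,p)$ with $p\in(0,1)$:
\begin{itemize}
\item Set $A^{(2)}_{ij}=A^{(1)}_{\sigma(i)\sigma(j)}$ for the transposition $\sigma=(1\,2)$. Then $A^{(1)}_{34}=A^{(2)}_{34}$, while $A^{(1)}_{13}$ is independent of $A^{(2)}_{13}=A^{(1)}_{23}$. Each layer is exchangeable, but the pair is not jointly exchangeable.
\item Let a global fair coin decide whether $A^{(2)}$ equals $A^{(1)}$ or is an independent copy. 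The pair is jointly exchangeable and each layer is dissociated. However, $\{A^{(1)}_{12}\neq A^{(2)}_{12}\}$ and $\{A^{(1)}_{34}\neq A^{(2)}_{34}\}$ are positively correlated, so the pair is not dissociated.
\end{itemize}

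Three points should be made explicit in your write-up.
\begin{enumerate}
\item ``On $n$ nodes'' has to mean the restriction to $[n]$ of an infinite array with these properties. Finite exchangeability is not enough: a single uniformly placed edge on three vertices is exchangeable but has no such representation.
\item Step~1 as you wrote it is an equality in law, whereas Step~2 conditions the observed array on $\xi$. To place $\xi$ alongside $A$, use the almost-sure form of the representation on an extension of the probability space, or a transfer argument.
\item The marginals $W^{(s)}$ you construct are versions of the layer graphons. They equal the prescribed $W^{(s)}$ of the hypothesis only up to measure-preserving rearrangement, and prescribed versions cannot in general be used simultaneously. For example, let $A^{(1)}=A^{(2)}\sim G(\infty,W)$ with $W(x,y)=xy$, and prescribe the equally valid versions $W^{(1)}=W$ and $W^{(2)}(x,y)=(1-x)(1-y)$. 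A common $\xi$ would force $\xi_i\xi_j=(1-\xi_i)(1-\xi_j)$, i.e.\ $\xi_i+\xi_j=1$, almost surely, which is impossible.
\end{enumerate}
So the theorem should be read with $\underline{W}$ produced by the joint representation, which is exactly what your proof delivers.
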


\begin{remark}
The multiplex Erd\H{o}s--R\'enyi model (see Definition~\ref{def:MER}) 
is a particular case of a multiplex exchangeable network in which 
the vector of cross-layer graphons $\underline{W}(\cdot)$ defined in 
\eqref{eq:multiparameters} is constant.
\end{remark}

A classical approach to approximating a graphon is through the 
Stochastic Block Model (SBM); see Appendix~\ref{append:SBM} for details. 
In the multiplex setting, the multivariate graphon defined in  \eqref{eq:multiparameters} can be approximated by a Multiplex Stochastic 
Block Model (MSBM), defined as follows.

\begin{definition}[Multiplex Stochastic Block Model]
\label{def:MSBM}
Let $d \geq 1$ and $K \in \mathbb{N}$. A $d$-layer multiplex stochastic 
block model (MSBM) consists of graphs 
$\mathbf{G}_n = (G_n^{(1)}, \ldots, G_n^{(d)})$ 
sharing the same vertex set, partitioned into $K$ communities. 
Let $z_i \in \{1, \ldots, K\}$ denote the block 
membership of node $i$. For $1 \leq i < j \leq n$, we have
\begin{equation}
\label{eq:MSBMparameters}
\underline{A}_{ij} \mid z_i, z_j 
\;\overset{\mathrm{ind}}{\sim}\; 
\mathrm{MultBernoulli}_d\big(\boldsymbol{\theta}_{z_i z_j}\big),
\end{equation}
where $\mathrm{MultBernoulli}_d$ denotes the $d$-variate Bernoulli 
distribution and for each pair of $(a,b) \in \{1,\ldots,K\}^2$, the parameter 
vector $\boldsymbol{\theta}_{ab}$ collects the $2^d - 1$ marginal and 
cross-layer interaction probabilities,
\begin{equation}
\label{eq:multiparameters}
\boldsymbol{\theta}_{ab}
=
\Big(
\theta^{(1)}_{ab}, \ldots, \theta^{(d)}_{ab},
\theta^{(12)}_{ab}, \ldots,
\theta^{(d-1\,d)}_{ab},
\ldots,
\theta^{(1\cdots d)}_{ab}
\Big)^{\!\top}.
\end{equation}
\end{definition}

\subsection{Graph Limit Theory}
In Appendix \ref{append:GLT}, we provide elements of graph limit theory developed independently by \cite{Kallenberg1989} and {\cite{Lovasz2012}}.
To derive asymptotic results for subgraph counts in dense multiplex networks, it is necessary to recall definitions from \cite{BhattacharyaChatterjeeJanson2023}. We first state the definition of the regularity of a graphon with respect to a motif. This requires the notion of a conditional homomorphism density.

\begin{definition}[1-Point Conditional Homomorphism Density {\cite{BhattacharyaChatterjeeJanson2023}}]
Let $F$ be a motif, $a \in V(F)$, and $v \in V(G_n)$.  
The \emph{1-point conditional homomorphism density} is defined by
\begin{equation}
t_a(v, F, W) :=
\mathbb{E}\left[
\prod_{(b,c) \in E(F)} W(U_b, U_c)
\ \bigg| \ U_a = v
\right].
\label{eq:1point_cond}
\end{equation}
\end{definition}

\medskip

\begin{definition}[\(F\)-Regularity of a Graphon {\cite{BhattacharyaChatterjeeJanson2023}}]
\label{def:reggraphon}
A graphon \(W\) is said to be \(F\)-regular if
\begin{equation}
\label{eq:F-reg}
\bar{t}(x,F,W) := \frac{1}{|V(F)|} \sum_{a=1}^{|V(F)|} t_a(x,F,W) = t(F,W),
\end{equation}
for almost every \(x \in [0,1]\), where \(t(F,W)\) is the homomorphism density of \(F\) in \(W\).

In other words, the homomorphism density of \(F\) in \(W\) conditioned on marking any one vertex is independent of the marking.
\end{definition}

The asymptotic distribution of cross-layer motif counts depends on the two-point conditional homomorphism density, defined as follows.

\begin{definition}[Two-Point Conditional Motif Kernel {\cite{BhattacharyaChatterjeeJanson2023}}]
Given a graph \(F = (V(F), E(F))\) and a graphon \(W\), define the two-point conditional kernel \(W_F: [0,1]^2 \to \mathbb{R}\) by
\begin{equation}
\label{eq:2point_kernel}
W_F(x,y) := \frac{1}{2|\mathrm{Aut}(F)|} \sum_{\substack{1 \leq a \neq b \leq |V(F)|}} t_{a,b}((x,y), F, W),
\end{equation}
where \(t_{a,b}((x,y),F,W)\) is the two-point conditional homomorphism density of \(F\) in \(W\) given vertices \(a,b\).
\end{definition}

We also introduce the \emph{vertex join operation}, which will be useful when computing the asymptotic variances. 

\begin{definition}[Vertex Join Operation {\cite{Bhattacharya2024}}]
\label{def:vertexjoin}
Fix \(r \geq 1\). Let \(H_1\) and \(H_2\) be graphs on vertex set \(\{1,2,\ldots,r\}\) with edge sets \(E(H_1)\) and \(E(H_2)\).
For \(a,b \in \{1,\ldots,r\}\), the \((a,b)\)-vertex join of \(H_1\) and \(H_2\) is obtained by identifying vertex \(a\) of \(H_1\) with vertex \(b\) of \(H_2\), denoted
\[
    H_1 \bigoplus_{a,b} H_2.
 \]

\end{definition}

Finally, in the univariate case, a sequence of dense graphs converges to a graphon if its cut-norm converges to zero. In the multivariate setting, the joint cut metric (recalled below) characterizes the convergence of a sequence of dense multiplex networks to a multivariate graphon. For a detailed treatment of multiplex graph limit theory, we refer to \cite{Ganguly2025}.

\begin{definition}[Joint Cut-Metric \cite{Bhattacharya2025}]
\label{def:jointcutmetric}

Let $d \in \mathbb{N}$. For a sequence of graphons $\mathbf{W} := (W^{(1)}, \ldots, W^{(d)})$. Then, the joint cut-metric between two sequences of graphons $\mathbf{W}$ and $\mathbf{\tilde{W}}$ is defined as: 
\begin{equation}
\label{eq:jointcut}
\delta_{\square}(\mathbf{W}, \mathbf{\tilde{W}})  := \inf_{\phi} \displaystyle \sum_{j=1}^{d} \|{W^{(j)}}^{\phi} - {\tilde{W}}^{(j)} \|_{\square},
\end{equation}
where $\phi: [0,1] \to [0,1]$ is a measure-preserving bijection, $W^{\phi} := W(\phi(x), \phi(y)),$ for $x,y \in [0,1]^2$, and $\|\cdot \|_{\square}$ is the cut distance defined in Appendix \ref{def:cut-distance}. 

We say that a sequence of $d-$layer multiplex networks $\mathbf{G}_n = (G_n^{(1)}, \ldots, G_n^{(d)})$ converges to the multivariate graphon $\mathbf{W}$ if 
\[
\delta_{\square}(\mathbf{W}^{\mathbf{G_n}}, \mathbf{W}) \xrightarrow[n \to \infty]{} 0,
\]
where $\mathbf{W}^{\mathbf{G_n}} = (W^{G_n^{(1)}}, \ldots  W^{G_n^{(d)}})$ is the sequence of empirical graphons (see Definition in the appendix \ref{def:empiricalgraphon}) associated to the multiplex network.

\end{definition}

\medskip

\section{Subgraph Counts in a Multiplex Network}

Subgraph counts are fundamental statistics for characterizing the structural properties of networks.  
In the case of multiplex networks, counting motifs across layers provides valuable insights into their inter-layer dependence structure.  
Since a multiplex network can be viewed as a decorated graph (see Remark~\ref{rem:decoratedmultiplex}), motifs in a multiplex network correspond naturally to motifs in a decorated graph. Moreover, because a decorated graph can be interpreted as an edge-colored graph, it is natural to refer to multiplex subgraphs as \emph{colored motifs}, although the term ``colored motifs'' has been used in distinct contexts, such as in \cite{RibeiroSilvaKaiser2009} and \cite{Rubert2020}.  

Counting motifs in a multiplex network leads to consider two approaches:  (i) counting a subgraph vector, where different motifs are counted separately in each layer (Section~\ref{subsec:vector}); or  (ii) more interestingly, counting the same motif across all layers (Section~\ref{subsec:cross-layer}).  
In both cases, one must decide whether the subgraphs share the same set of nodes (aligned motifs), or whether they can be embedded on disjoint node sets (independent embeddings). 
The goal of this section is to clarify these notions and to provide a unified framework for understanding multiplex subgraph counts and complete the formulations proposed in \cite{Ganguly2025} and \cite{KunszentiKovacsLovaszSzegedy2022}.

\label{sec:subgraphcounts}

\subsection{Vector-Motif Counts in a Multiplex}
\label{subsec:vector}
We begin by introducing the notion of \emph{vector-motif counts}. 
Let $F_1, \dots, F_d$ be deterministic subgraphs and consider a $d$-layer multiplex network.  
The non-aligned vector-motif count across all $d$ layers
\[
X^{\text{indep}}_{F_1,\dots,F_d}\bigl(G_n^{(1)},\dots,G_n^{(d)}\bigr)
\]
represents the number of all combinations of motif occurrences across the \( d \) layers, where in each layer \( l \in [d] \), one independently chooses an embedding of the motif \( F_l \) (indicated by $\text{indep}$).  
Here, an embedding of a motif \( F_l \) into a layer \( G_n^{(l)} \) is an injective map \( s: V(F_l) \to [n] \) such that each edge \((i,j) \in E(F_l)\) is mapped to an edge \((s(i),s(j))\) in \( G_n^{(l)} \). In other words, it corresponds to one occurrence of the motif within the layer.  

In particular, the embeddings ($s_1, \ldots, s_d$) are chosen independently across layers, so the motifs can occur on entirely different vertex subsets of the multiplex. 
Formally, this quantity can be written as
\begin{equation}
\label{eq:vectormotifcount}
\begin{aligned}
X^{\text{indep}}_{F_1,\dots,F_d}\bigl(G_n^{(1)},\dots,G_n^{(d)}\bigr)
&= \frac{1}{\prod_{l=1}^d |\mathrm{Aut}(F_l)|}
\sum_{s_1\in [n]_{|V(F_1)|}} \cdots \sum_{s_d\in [n]_{|V(F_d)|}}
\prod_{l=1}^d \prod_{(i,j)\in E(F_l)} A^{(l)}_{s_l(i)\,s_l(j)} \\
&= \sum_{F'_1 \subseteq K_n} \cdots \sum_{F'_d \subseteq K_n} 
\prod_{l=1}^d \mathbf{1}_{\{F_l \equiv F'_l\}} \, \mathbf{1}_{\{F'_l \subseteq G_n^{(l)}\}} \\
&= \prod_{l=1}^{d} X_{F_l}\bigl(G_n^{(l)}\bigr),
\end{aligned}
\end{equation}
where $[n]_k$ denotes the set of ordered $k$-tuples of distinct elements of $[n]$, and $A^{(l)}$ is the adjacency matrix of the $l$-th layer $G_n^{(l)}$. 
We recall that $K_n$ denotes the complete graph on vertex set $[n]$, and the notation $F_l \equiv F'_l$ means that the graphs $F_l$ and $F'_l$ are isomorphic.

\vspace{0.3em}

This quantity, however, is not particularly informative for understanding the interaction between layers. 
Indeed, it simply counts all possible combinations of motif instances across layers, without checking whether the motifs occur on the same vertex sets. 
As such, it does not capture any structural dependence or alignment between layers. In fact, if we observe for example a large number of shared triangles in two layers of a multiplex, then this will give information on the similarity of the two layers in the sense that they both display similar structures. Yet, a far more informative measure is the number of aligned triangles (i.e. those that share the same vertices) as this directly reflects the extent of dependency between the layers.

\vspace{0.5em}

Hence, a more meaningful measure of cross-layer dependencies is obtained by considering \emph{aligned embeddings}. 
Let 
\[
k := \max_{l \in [d]} |V(F_l)|
\]
be the maximum motif size across layers. 
The \emph{aligned vector-motif count} across all $d$ layers is then defined as
\begin{equation}
\label{eq:alignedvectormotif}
X_{F_1, \ldots, F_d}^{\text{aligned}}\bigl(G_n^{(1)}, \ldots, G_n^{(d)}\bigr)
:= \frac{1}{\prod_{l=1}^d |\mathrm{Aut}(F_l)|}
\sum_{s\in [n]_{k}} 
\prod_{l=1}^d \prod_{(i,j)\in E(F_l)} A^{(l)}_{s(i)\,s(j)}.
\end{equation}

In this definition, each layer is evaluated on the same $k$-tuple of vertices $s=(s(1),\dots,s(k))$. 
Hence, the aligned count measures the number of aligned $k$-tuples of vertices such that the motif $F_1$ is realized in layer~1 on some subset of these vertices, the motif $F_2$ is realized in layer~2 on (possibly another) subset of the same vertices, and so on. For example, in a two-layer multiplex network with $F_1 = K_3$ and $F_2 = K_2$, the quantity $X_{F_1, F_2}^{\text{aligned}}(G_n^{(1)}, G_n^{(2)})$ counts the number of triplets $(v_1, v_2, v_3) \in [n]^3$ such that these three vertices form a triangle in layer~1, and at least one of the three possible vertex pairs among $\{v_1, v_2, v_3\}$ forms an edge in layer~2.  
In contrast to~\eqref{eq:vectormotifcount}, the quantity \eqref{eq:alignedvectormotif} captures the extent to which motifs are spatially aligned across layers, and hence reflect true cross-layer structural dependencies in the multiplex network. The word dependency is here as of yet quite non-specific and will depend on the edge variables involved.

\bigskip 
\noindent
In this paper, we focus on the special case where all layers are tested against the same motif, i.e.\ $F_1=\cdots=F_d=F$.  
This setting is natural when studying dependencies between the layers of a multiplex network: instead of simply counting different motifs within each layer, one is often more interested in how frequently a given motif $F$ occurs simultaneously across different layers, as this directly reflects inter-layer similarity. As discussed above, we consider aligned embeddings, meaning that the same set of vertices is required to form the motif $F$ in all layers. 
Under this assumption, the corresponding \emph{cross-layer motif count} across all $d$ layers is defined as
\begin{equation}
\label{eq:alignedmotifcount}
X_F\bigl(G_n^{(1)},\ldots,G_n^{(d)}\bigr) 
= \frac{1}{|\mathrm{Aut}(F)|} 
  \sum_{s \in [n]_{|V(F)|}} 
    \prod_{(i,j)\in E(F)} \prod_{l=1}^d A^{(l)}_{s_i s_j},
\end{equation}
where $A^{(l)}$ denotes the adjacency matrix of layer $l$. 
Throughout the paper, we shall use the shorthand notation
$
X_F(G_n^{(1)}, \ldots, G_n^{(d)}) := X_F^{\text{aligned}}(G_n^{(1)}, \ldots, G_n^{(d)}).
$

\vspace{0.3em}

In the next section, we focus on this quantity and compute its expectation under two models: the exchangeable (graphon-based) model (see Theorem~\ref{thm:mutlialdous}) and the Erdős–Rényi model.

\subsection{Cross-layer Motif Counts in a Multiplex}
\label{subsec:cross-layer}
Let \(F = (V(F), E(F))\) be a deterministic graph. 
For each \(i \in [d]\), we denote by \(X_F(G_n^{(i)})\) the number of subgraphs of \(G_n^{(i)}\) that are isomorphic to \(F\).
For $l \in \mathbb{N}$  and for distinct indices \(i_1, \ldots, i_l \in [d]\), we denote by
\[
X_F\bigl(G_n^{(i_1)}, G_n^{(i_2)}, \ldots, G_n^{(i_l)}\bigr)
\]
the cross-layer motif count given by \eqref{eq:alignedmotifcount}, i.e. the number of subgraphs that are isomorphic to \(F\) and simultaneously present across the layers \((G_n^{(i_k)})_{k \in [l]}\), in the sense that \(F\) appears on the same vertex set in each of these layers. 

To systematically describe all combinations of layers, we introduce the set
\begin{equation}
\Lambda_d = \{1, 2, \dots, d, (12), \dots, (d-1\,d), \dots, (12\ldots d)\},
\label{eq:lambda_set}
\end{equation}
which contains \(2^d - 1\) elements. 
Each element of \(\Lambda_d\) corresponds to a subset of layers: for each \(k \in [d]\), there are \(\binom{d}{k}\) elements of $\Lambda_d$ that refer to exactly \(k\) layers.  For example, \((12)\) denotes the subset containing the first two layers of the multiplex, while \((247)\) refers to the second, fourth, and seventh layers.
We introduce the following map
\begin{equation}
\label{eq:mapS}
S : \Lambda_d \to \mathcal{P}([d]),
\end{equation}
which assigns to each element of \(\Lambda_d\) the set of indices corresponding to its associated layers. For instance, \(S((12)) = \{1,2\}\) and \(S((123)) = \{1,2,3\}\).  
We also define the complement map
\begin{equation}
\label{eq:barS}
\bar{S} : \Lambda_d \to \mathcal{P}([d]),
\end{equation}
which assigns to each element of \(\Lambda_d\) the complement of its associated layer indices in \([d]\). For example, \(\bar{S}((12)) = \{3,4,\dots,d\}\) and \(\bar{S}((134)) = \{2,5,\dots,d\}\).  

\noindent 
We denote the collection of general motif counts in a $d$-layer multiplex network by
\begin{equation}
    \label{eq:generalmotifsdef}
    \mathbf{X}_F := \bigl( X_F(G_n^{(k)}) \bigr)_{k \in \Lambda_d},
\end{equation}
and for each $k \in \Lambda_d,$ the number of cross-layer motifs $X_F(G_n^{(k)})$ is given by 
\begin{equation}
\label{eq:generalmotif}
X_F(G_n^{(k)}) := \frac{1}{|\mathrm{Aut}(F)|} \sum_{s \in [n]_{|V(F)|}} \prod_{(a,b) \in E(F)} A^{(k)}_{s_a s_b}(G_n),
\end{equation}
where \(A^{(k)}(G_n) = \bigcirc_{i \in S(k)} A^{(i)}\) denotes the adjacency matrix of the intersection graph \(G_n^{(k)}\), $\bigcirc_{i \in S(k)}$ denotes the Hadamard (entry-wise) product of matrices $A^{(i)}$, and \([n]_{|V(F)|}\) is the set of all \( |V(F)| \)-tuples of distinct elements from \([n]\). 

\begin{figure}[H]
    \centering \begin{tikzpicture}[scale=0.7, every node/.style={circle,draw,minimum size=4mm}]

\foreach \i/\x/\y in {
  1/0/0,
  2/2/0,
  3/1/1.5,
  4/3/1.5,
  5/2/3,
  6/0/4.5,
  7/2/4.5,
  8/4/4.5,
  9/5/3,
  10/1/6,
  11/3/6
} {
  \node (n\i) at (\x,\y) {\i};
}

\draw[red,thick] (n1) -- (n2);
\draw[green,thick] (n2) -- (n3);
\draw[red,thick] (n1) -- (n3);
\draw[green,thick] (n3) -- (n5);
\draw[green,thick] (n3) -- (n4);
\draw[red,thick] (n4) -- (n5);
\draw[red,thick] (n5) -- (n11);
\draw[red,thick] (n5) -- (n7);
\draw[red,thick] (n7) -- (n11);
\draw[green,thick] (n6) -- (n7);
\draw[green,thick] (n6) -- (n10);
\draw[green,thick] (n7) -- (n10);
\draw[green,thick] (n11) -- (n8);
\draw[green,thick] (n8) -- (n9);

\node[draw=none, fill=none] at (1.6,-0.8) {\textbf{Layer (1)}};

\begin{scope}[xshift=7cm]

\foreach \i/\x/\y in {
  1/0/0,
  2/2/0,
  3/1/1.5,
  4/3/1.5,
  5/2/3,
  6/0/4.5,
  7/2/4.5,
  8/4/4.5,
  9/5/3,
  10/1/6,
  11/3/6
} {
  \node (m\i) at (\x,\y) {\i};
}

\draw[blue,thick] (m1) -- (m4);
\draw[blue,thick] (m2) -- (m4);
\draw[green,thick] (m3) -- (m4);
\draw[green,thick] (m2) -- (m3);
\draw[green,thick] (m3) -- (m5);
\draw[blue,thick] (m5) -- (m6);
\draw[green,thick] (m11) -- (m8);
\draw[green,thick] (m8) -- (m9);
\draw[blue,thick] (m7) -- (m8);
\draw[blue,thick] (m7) -- (m9);
\draw[green,thick] (m6) -- (m7);
\draw[green,thick] (m7) -- (m10);
\draw[green,thick] (m6) -- (m10);

\node[draw=none, fill=none] at (2,-0.8) {\textbf{Layer (2)}};

\end{scope}

\end{tikzpicture}
    \caption{A $2$-layer multiplex network with $11$ vertices. Edges of layer 1 are colored in red, edges of layer 2 colored in blue and edges present in both layers are colored in green.}
    \label{fig:placeholder}
\end{figure}
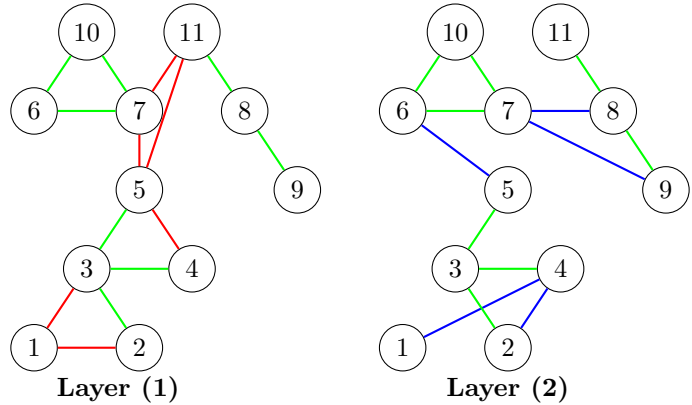

\begin{example}
We provide an example of cross-layer motif counts in a $2$-layer multiplex network in Figure~\ref{fig:placeholder}. Several types of cross-layer motifs can be counted. For instance, the number of shared triangles (i.e., triangles present in both layers) is $1$: the triangle with edge set $\{(6,7), (7,10), (6,10)\}$, whose edges are all colored green. 

The number of triangles that are entirely exclusive to layer 1 (i.e., present only in layer 1 and not in layer 2) is $3$, namely 
$\{(1,2),(2,3),(3,1)\}$, $\{(3,4),(4,5),(3,5)\}$, and $\{(5,7),(7,11),(5,11)\}$. Among these, only one triangle is edge-exclusive to layer 1 (i.e., all three of its edges appear solely in layer~1 and are colored red): $\{(5,7), (7,11), (5,11)\}$.
\end{example}

The objective of the next section (Section \ref{sec:Graphon estimation}) is to construct consistent estimators for the collection of graphons \((W^{(k)})_{k \in \Lambda_d}\) using a motif-counting approach, namely the vector of cross-layer motif counts \(\mathbf{X}_F\) defined in~\eqref{eq:generalmotifsdef}. In particular, when a parameter \(W^{(k)}\) corresponds to a subset of layers \(S(k)\) containing a single element (\(|S(k)| = 1\)), its estimation is based on motif counts in the corresponding single graph \(G^{(i)}\) with \(i \in S(k)\). On the other hand, if \(S(k)\) contains multiple layers (\(|S(k)| > 1\)), the estimation relies on cross-layer motif counts across all graphs \((G^{(i)})_{i \in S(k)}\).

\vspace{0.3em}
Consider a dense exchangeable graph $G_n$ generated by a graphon $W$ according to the Aldous–Hoover theorem \eqref{thm:aldous}. It is easy to see that the expected number of motifs $F$ in $G_n$ is given by
\begin{align} \mathbb{E}[X_F(G_n)]  &= \frac{1}{|\mathrm{Aut}(F)|} \displaystyle \sum_{s \in [n]_{|V(F)|}} \mathbb{E}_{\xi} \left[ \mathbb{E} \left[ \displaystyle \prod_{\{i,j\} \in E(F)} A_{s_i s_j} \biggm| \xi\right] \right] \nonumber \\ &= \frac{1}{|\mathrm{Aut}(F)|} \displaystyle \sum_{s \in [n]_{|V(F)|}} \mathbb{E}_{\xi} \left[ \displaystyle \prod_{\{i,j\} \in E(F)} W(\xi_{s_i}, \xi_{{s_j}})\right] =\frac{\binom{n}{v} v!}{|\mathrm{Aut}(F)|} t(F,W), \label{equ:A} \end{align} 
where $t(F,W)$ denotes the homomorphism density of $F$ in $W$, as defined in Appendix \eqref{eq:homodensdef}.
From Equation \eqref{equ:A}, one can directly use the method of moments in order to estimate the density $t(F,W).$

For a $2$-layer multiplex network, $G_n^{(1)}$ and $G_n^{(2)}$, with parameters $W^{(1)}$,  $W^{(2)}$ and $W^{(12)}$, the expected number of cross-layer motifs shared between both layers (which is equal to the number of motifs in the intersection graph $G_n^{(12)}$  whose expression is given in \eqref{eq:generalmotif}) is
\begin{align} \mathbb{E}[\underbrace{X_F(G_n^{(1)}, G_n^{(2)})}_{=X_F(G_n^{(12)})}]  &= \frac{1}{|\mathrm{Aut}(F)|} \displaystyle \sum_{s \in [n]_{|V(F)|}} \mathbb{E}_{\xi} \left[ \mathbb{E} \left[ \displaystyle \prod_{\{i,j\} \in E(F)} A^{(1)}_{s_is_j}A^{(2)}_{s_is_j} \biggm| \xi\right] \right] = \frac{\binom{n}{v} v!}{|\mathrm{Aut}(F)|} t(F,W^{(12)}), \label{equ:A2} \end{align}
which immediately leads to a straightforward moment-based estimator for $t(F,W^{(12)})$.

We now consider the general case of a multiplex network with $d>2$ layers, characterized by $2^d - 1$ parameters.
Let $k \in \Lambda_d$, and let $S(k) \subseteq [d]$ denote the subset of layers associated with $k$. 
Estimating $W^{(k)}(\cdot)$ requires counting cross-layer motifs that are shared across all graphs $(G_n^{(i)})_{i \in S(k)}$. Under the assumption of aligned nodes in the multiplex framework, the number of such shared motifs coincides with the number of motifs in the intersection graph $G_n^{(S(k))}$, and its expectation is given by
\begin{equation}
\mathbb{E}[X_F(G_n^{(S(k))})] 
= \frac{\binom{n}{|V(F)|} |V(F)|!}{|\mathrm{Aut}(F)|} \, t(F, W^{(k)}),
\end{equation}
where $t(F, W^{(k)})$ denotes the homomorphism density of $F$ in the graphon $W^{(k)}$. Here again, one uses the method of moment to obtain an estimator of $t(F,W^{(k)}).$

\begin{remark}
\label{rem:combination}
Note that while moment-based estimators can be obtained for homomorphism densities, one cannot directly construct a moment-based estimator for the underlying graphons $W^{(k)}$, since $t(F,W^{(k)})$ involves a multiple integral in $W^{(k)}$. To address this, we combine the method of moments with the multiplex stochastic block model framework, as developed in Section~\ref{sec:Graphon estimation}.  
\end{remark}

\begin{remark}

Consider a $d$-layer multiplex Erdős--Rényi model ($d>2$). For $k \in \Lambda_d$, a direct calculation yields
\begin{equation}
\mathbb{E}\Bigl[X_F\big(G_n^{(S(k))}\big)\Bigr]
= \frac{\binom{n}{|V(F)|} \, |V(F)|!}{|\mathrm{Aut}(F)|} \, p_{(k)}^{|E(F)|}.
\label{eq:ERd_expected_shared}
\end{equation}
Hence, in the Erdős--Rényi setting, the method of moments can be applied directly to estimate $p_{(k)}$. This observation will form the backbone of our methodology in Section~\ref{sec:Graphon estimation}, where the multiplex stochastic block model is viewed as a combination of multiplex Erdős--Rényi models.
\end{remark}

\section{Multivariate Graphon Estimation}
\label{sec:Graphon estimation}

In the classical Stochastic Block Model approach (see Appendix \ref{append:SBM}), the estimation of the connectivity matrix entries in \eqref{eq:connect-matrix} and \eqref{eq:connect-matrix-2} relies on counting the number of edges connecting nodes from community $a$ to nodes from community $b$, and dividing by the total number of possible edges between these two communities. Our objective is to extend this edge-counting approach to more complex motifs (such as triangles) spanning multiple communities, thereby combining the multiplex Stochastic Block Model (see Definition \ref{def:MSBM}) with a method-of-moments framework. By allowing connectivity matrices coefficients to be estimated using arbitrary motif counts, the proposed approach provides a more expressive moment-based representation of network structure than edge counts alone. This is not because the underlying graphon is intrinsically sensitive to particular motifs, but rather because collections of motif densities jointly characterize graphons through the equivalence between the subgraph distance and the cut metric \cite{BollobasRiordan2009}.
We do not provide a comparison of estimation quality (in terms of mean squared error rates or minimax upper bounds) between our moment-based approach and the standard SBM (for which such rates have been established in \cite{GaoLuZhou2015}). Nor do we claim that our method outperforms the classical SBM. Instead, our contribution lies in proposing a general framework that incorporates arbitrary motif counts into a moment-based estimation procedure. 

In the following, we present the methodology first for a single-layer network, then for a two-layer multiplex network, and finally for the general case of a 
$d$-layer multiplex network.

\subsection{Case $d =1$}

Let $(G_n)$ be an exchangeable random graph generated by a graphon $W$. Recall the expression for the expected number of motifs in a single network given in \eqref{equ:A}. As noted in Remark \ref{rem:combination}, although the homomorphism density $t(F,W)$ can be estimated using the method of moments \cite{BickelChenLevina2011}, directly estimating the graphon $W$ is considerably more challenging, since $t(F,W)$ involves a multiple integral of $W$ (see Appendix \eqref{eq:homodensdef}).
A natural strategy to bypass this integral is to consider the special case of an Erdős–Rényi graph. Indeed, by \eqref{eq:ERd_expected_shared}, the parameter $p$ can be directly estimated via the method of moments. Building on this idea, one can estimate a general graphon $W$ by leveraging the stochastic block model (SBM) together with the method of moments for Erdős–Rényi graphs. Specifically, an SBM with $K$ communities can be viewed as a mixture of $K$ Erdős–Rényi graphs, each with parameter corresponding to the connectivity matrix entry $\theta_{kk}$ for $k \in {1, \dots, K}$.
Thus, rather than estimating $\theta_{kk}$ simply as the proportion of edges as in Appendix \eqref{eq:connect-matrix}, we treat $\theta_{kk}$ as the parameter of an Erdős–Rényi graph and estimate it using motif counts, as in \eqref{eq:ERd_expected_shared}. 
This approach works well for estimating the diagonal elements of the connectivity matrix $\theta$, that is, $\theta_{kk}$ for $k \in \{1, \dots, K\}$. However, a challenge arises when estimating the off-diagonal elements $\theta_{kl}$ for $k \neq l$, which represent the probability that an edge connects a node from community $k$ to a node from community $l$. Estimating $\theta_{kl}$ requires counting motifs that link nodes from two different Erdős–Rényi graphs. Indeed, $\theta_{kl}$ cannot be viewed as the parameter of a standard Erdős–Rényi graph, but rather as the parameter of a \emph{bipartite Erdős–Rényi graph}, consisting of two disjoint vertex sets corresponding to the two communities. We cannot directly estimate $\theta_{kl}$ using a method of moments based on the same motif $F$ employed for estimating the diagonal elements $\theta_{ii}$. For instance, if $F$ is a triangle, then counting triangles spanning both communities $k$ and $l$ introduces an overlap: the count necessarily includes edges connecting two nodes within the same community, which biases the estimation. By contrast, if $F$ is a $2$-star (i.e., $K_{1,2}$), no such overlap occurs: a $2$-star spanning both communities cannot contain an intra-community edge, due to its bipartite structure. These considerations motivate the introduction of a reduced class of motifs for the estimation of off-diagonal elements of the connectivity matrix. We refer to these as \emph{bichromatic alternate motifs} (see Definition~\ref{def:alternate}).
Note that the estimation of $\theta_{kl}$ is simple in the special case where $F = K_2$, as it reduces to counting the edges connecting nodes in community $k$ to nodes in community $l$, as already done in the classical SBM (see Appendix \eqref{eq:connect-matrix}). To generalize this to arbitrary motifs, we proceed as follows: after partitioning the graph into $K$ communities, we construct $\binom{K}{2}$ graphs $\mathcal{G}_{kl}$, which are node-bicolored, for all $k \neq l \in [K]$. If $n$ is a multiple of $K$, each node-bicolored graph $\mathcal{G}_{kl}$ will contain $h$ nodes from community $k$ colored in red and $h$ nodes from community $l$ colored in blue. Otherwise, the graph will contain $h$ nodes from community $k$ and $h + r$ nodes from community $l$.
The adjacency matrix of $\mathcal{G}_{kl}$ is a block matrix, provided that we reorder the nodes so that the first $h$ nodes are from community $k$ and the remaining nodes are from community $l$. It has the form
\begin{equation}
\tilde{A}^{kl} = \begin{bmatrix} 
\Theta^{kk} & (\Theta^{kl})^T \\ 
\Theta^{kl} & \Theta^{ll} 
\end{bmatrix},
\end{equation}
where 
\begin{align*}
\mathbb{E}[(\Theta^{kk})_{ij}] &= \theta_{kk}, & \Theta^{kk} & \quad \text{is the adjacency matrix of the Erdős–Rényi graph of community $k$},\\
\mathbb{E}[(\Theta^{kl})_{ij}] &= \theta_{kl}, & \Theta^{kl} & \quad \text{is the adjacency matrix of the bipartite Erdős–Rényi graph of communities $k$ and $l$},\\
\mathbb{E}[(\Theta^{ll})_{ij}] &= \theta_{ll}, & \Theta^{ll} & \quad \text{is the adjacency matrix of the Erdős–Rényi graph of community $l$}.
\end{align*}

Let $\hat{z}$ be the MLE estimator of the SBM community membership vector, as defined in Appendix \eqref{eq:MLE}. We can then estimate $\theta_{kk}$ using $\mathbb{E}[X_{F^{\text{red}}}(\mathcal{G}_{kl})]$, $\theta_{ll}$ using $\mathbb{E}[X_{F^{\text{blue}}}(\mathcal{G}_{kl})]$, and $\theta_{kl}$ using $\mathbb{E}[X_{F^{\text{rb alternate}}}(\mathcal{G}_{kl})]$, where 
\begin{itemize}
\item $X_{F^{\text{red}}}(\mathcal{G}_{kl})$ denotes the number of red-node motifs in $\mathcal{G}_{kl}$, that is, the number of motifs $F$ where the nodes belong only to community $k$. It is given by
\begin{equation}
\label{eq:red}
\begin{aligned}
X_{F^{\text{red}}}(\mathcal{G}_{kl}) &= \frac{1}{|\mathrm{Aut}(F)|} \sum_{s \in [h]_{|V(F)|}} \prod_{\{i,j\} \in E(F)} \underbrace{\left[{\tilde{A}}^{kl}_{s_i s_j}\right]_{11}}_{= \Theta^{kk}_{s_i s_j}} \\
&= \frac{1}{|\mathrm{Aut}(F)|} \displaystyle \sum_{s \in [n]_{|V(F)|}} \prod_{(i,j) \in E(F)} A_{s_is_j} \prod_{t \in V(F)} \mathbf{1}\{\hat{z}_{s_t} =k\}.
\end{aligned}
\end{equation}
\item $X_{F^{\text{blue}}}(\mathcal{G}_{kl})$ denotes the number of blue-node motifs in $\mathcal{G}_{kl}$, that is, the number of motifs $F$ where the nodes belong only to community $l$, and is given by
\begin{equation}
\label{eq:blue}
\begin{aligned}
X_{F^{\text{blue}}}(\mathcal{G}_{kl}) & = \frac{1}{|\mathrm{Aut}(F)|} \sum_{s \in [h]_{|V(F)|}} \prod_{\{i,j\} \in E(F)} \underbrace{\left[{\tilde{A}}^{kl}_{s_i s_j}\right]_{22}}_{= \Theta^{ll}_{s_i s_j}}\\
&= \frac{1}{|\mathrm{Aut}(F)|} \displaystyle \sum_{s \in [n]_{|V(F)|}} \prod_{(i,j) \in E(F)} A_{s_is_j} \prod_{t \in V(F)} \mathbf{1}\{\hat{z}_{s_t} =l\}.
\end{aligned}
\end{equation}
\item $X_{F^{\text{rb alternate}}}(\mathcal{G}_{kl})$ denotes the number of alternate bichromatic motifs in $\mathcal{G}_{kl}$ associated to $F$. Alternate motifs are subgraphs in which the nodes belong to both communities $k$ and $l$, but in which there are no adjacent vertices of the same color (see Figure \ref{fig:1} and definition \ref{def:alternate}) . This quantity is given by
\begin{equation}
\label{eq:alternate}
\begin{aligned}
X_{F^{\text{rb alternate}}}(\mathcal{G}_{kl}) &= \frac{1}{|\mathrm{Aut}(F^{\text{alt}})|} \sum_{s \in [h]_{|V(F^{\text{alt}})|}^{\star}} \prod_{\{i,j\} \in E(F^{\text{alt}})} \underbrace{\left[{\tilde{A}}^{kl}_{s_i s_j}\right]_{12}}_{= \Theta^{kl}_{s_i s_j}}\\
&=  \frac{1}{|\mathrm{Aut}(F^{\text{alt}})|} \displaystyle \sum_{s \in [n]_{|V(F^{\text{alt}})|}} \prod_{(i,j) \in E(F^{\text{alt}})} A_{s_is_j} \left( \sum_{c \in \mathcal{C}_{alt}(F^{\text{alt}})} \prod_{t =1}^{|V(F^{\text{alt}})|} \mathbf{1}\{\hat{z}_{s_t} =c_t\}\right),
\end{aligned}
\end{equation}

where 
\begin{itemize}
    \item[-]$F^{\text{alt}}$ is the alternate motif associated to $F$.
    \item[-] $[h]_{|V(F^{\text{alt}})|}^{\star}$ is the set of $|V(F^{\text{alt}})|$-tuples $(s_1, \ldots, s_{|V(F^{\text{alt}})|}) \in [h]^{|V(F^{\text{alt}})|}$, not necessarily with distinct indices,
    \item[-] $\mathcal{C}_{alt}(F^{\text{alt}}) := \{c \in \{k,l\}^{|V(F^{\text{alt}})|} : \forall (i,j) \in E(F^{\text{alt}}), c_i \neq c_j\}.$
\end{itemize}
\end{itemize}

\begin{figure}[ht]
\centering
\caption{Examples of bichromatic alternate motifs.}
\label{fig:1}
\begin{minipage}{0.45\textwidth}
\centering
\begin{tikzpicture}[scale=0.3]
    \node[draw, fill=blue!30, circle] (A) at (0,0) {};
    \node[draw, fill=red!30, circle] (B) at (2,0) {};
    \node[draw, fill=red!30, circle] (C) at (0,2) {};
    \node[draw, fill=blue!30, circle] (D) at (2,2) {};
    \draw[-] (A) -- (B);
    \draw[-] (B) -- (D);
    \draw[-] (D) -- (C);
    \draw[-] (C) -- (A);
\end{tikzpicture}
\end{minipage}
\hfill
\begin{minipage}{0.45\textwidth}
\centering
\begin{tikzpicture}[scale=0.3]
    \node[draw, fill=blue!30, circle] (A) at (0,0) {};
    \node[draw, fill=red!30, circle] (B) at (1,2) {};
    \node[draw, fill=blue!30, circle] (C) at (2,0) {};
    \draw[-] (A) -- (B);
    \draw[-] (B) -- (C);
\end{tikzpicture}
\end{minipage}
\end{figure}

To proceed rigorously, we provide a formal definition of an alternate motif, along with the conditions it must satisfy.  

\begin{definition}[Bichromatic/alternate motif]
\label{def:alternate}
A bichromatic alternate motif $F^{\text{alternate}}$ is a motif $F^{\text{alt}}$ in a node-bicolored graph with colors $x$ and $y$, satisfying the following conditions:
\begin{enumerate}
    \item A subset of nodes $(v_i)_{i \in I} \subseteq V(F^{\text{alt}})$ are colored with color $x$.
    \item The remaining nodes $(v_i)_{i \in V(F^{\text{alt}}) \setminus I}$ are colored with color $y$.
    \item No two nodes of the same color are adjacent, i.e., $v_i \bcancel{\sim} v_j$ for all $i, j \in I$. Equivalently, edges can only exist between nodes of different colors.
\end{enumerate}

Equivalently to the preceding conditions, a motif $F^{\text{alt}}$ is alternate if the set $\mathcal{C}_{alt} (F^{\text{alt}})$, as introduced in \eqref{eq:alternate} is non-empty. 

Moreover, given any motif $F$ in a node-bicolored graph $\mathcal{G}_{kl}$, one can associate a corresponding bichromatic alternate motif $F^{\mathrm{alt}}$ by successively removing all edges connecting vertices of the same color until the above conditions are satisfied. Thus, the alternate motif $F^{\mathrm{alt}}$ associated with a motif $F$ is defined as the largest (in the sense of inclusion) subgraph of $F$ in which no two adjacent vertices belong to the same community. 
\end{definition}

Let us take an example to illustrate the concept of alternate motif. 
\begin{example}
Consider $K = 10$ communities and community size $h = 4$. For two communities $k$ and $l$, the bicolored graph $\mathcal{G}_{kl}$ is illustrated below:

\begin{figure}[ht]
\centering
\begin{tikzpicture}[scale=0.7]
\node[draw, fill=blue!30, circle] (1) at (0,1) {1};
\node[draw, fill=blue!30, circle] (2) at (0,2.5) {2};
\node[draw, fill=blue!30, circle] (3) at (1,0) {3};
\node[draw, fill=blue!30, circle] (4) at (2,1) {4};
\node[draw, fill=red!30, circle] (5) at (6,1) {5};
\node[draw, fill=red!30, circle] (6) at (6,0) {6};
\node[draw, fill=red!30, circle] (7) at (7,1) {7};
\node[draw, fill=red!30, circle] (8) at (7,0) {8};

\draw[-] (1) -- (2);
\draw[-] (2) -- (3);
\draw[-] (1) -- (3);
\draw[-] (2) -- (4);
\draw[-] (3) -- (4);
\draw[-] (4) -- (5);
\draw[-] (4) -- (6);
\draw[-] (5) -- (6);
\draw[-] (5) -- (7);
\draw[-] (7) -- (8);
\draw[-] (5) -- (8);
\draw[-] (3) -- (6);
\end{tikzpicture}
\end{figure}

If the chosen motif is $F = K_3$ (a triangle), then the corresponding alternate motif is the $2$-star $K_{1,2}$. In this case, we have
\[
X_{F^{\text{rb alternate}}}(\mathcal{G}_{kl}) = 2, \quad X_{F^{\text{red}}}(\mathcal{G}_{kl}) = 1, \quad X_{F^{\text{blue}}}(\mathcal{G}_{kl}) = 2.
\]
\vspace{0.1mm}
\begin{remark}
Observe that bichromatic alternate motifs correspond precisely to the motifs in the bipartite subgraph of $\mathcal{G}_{kl}$ induced by the disjoint vertex sets of communities $k$ and $l$.
\end{remark}

\begin{remark}
From the equivalence between the subgraph distance and the cut norm, we know in general that the collection of all subgraph densities characterizes a graphon. Using alternate motifs to estimate the off-diagonal entries of the connectivity matrix does have some limitations, as it does not retain all the information in the subgraph densities. Still, it captures more than just edge counts: it extracts the part of the subgraph information that can propagate across communities and thus remains useful for estimating off-diagonal entries.
\end{remark}

\end{example}

\medskip
\noindent
Having defined the necessary tools and approach, we now present the methodology employed to estimate the graphon $W$ using motif counts. The procedure begins by estimating the community assignment vector $\widehat{z}$ via the maximum likelihood estimator (MLE), as in Appendix \eqref{eq:MLE}. The connectivity matrix $\theta$ is estimated from motif counts as follows:  

\begin{enumerate}  
    \item Construct the $\binom{K}{2}$ bicolored graphs $\mathcal{G}_{kl}$, where nodes from community $k$ are colored red and nodes from community $l$ are colored blue.  
    \item For each bicolored graph $\mathcal{G}_{kl}$, we apply the method of moments to obtain  
    \begin{equation}
    \label{eq:estimates1}
    \widehat{\theta}_{kk} = f^{\text{red}}\left(X_{F^{\text{red}}}(\mathcal{G}_{kl})\right), \quad
    \widehat{\theta}_{ll} = f^{\text{blue}}\left(X_{F^{\text{blue}}}(\mathcal{G}_{kl})\right), \quad
    \widehat{\theta}_{kl} = f^{\text{alt}}(X_{F^{\text{rb alt}}}(\mathcal{G}_{kl})),
    \end{equation}  
    where $X_{F^{\text{red}}}(\mathcal{G}_{kl})$, $X_{F^{\text{blue}}}(\mathcal{G}_{kl})$, and $X_{F^{\text{rb alt}}}(\mathcal{G}_{kl})$ are given in \eqref{eq:red}, \eqref{eq:blue}, and \eqref{eq:alternate}, respectively and $f^{\text{red}}, f^{\text{blue}}$, $f^{\text{alt}}$  are the functions given by the method of moments : for example, $f^{\text{red}}$ is given by inverting $\mathbb{E}\left[X_{F^{\text{red}}}(\mathcal{G}_{kl})\right] = \frac{|[h]_{|V(F)|}|}{|\mathrm{Aut}(F)|} \theta_{kk}^{|E(F)|}$.
    \item This yields estimates for all entries of $\theta$. Note that estimating $\theta_{kk}$ does not require bicolored graphs, since it can be obtained directly from motif counts within each community. Bicolored graphs are only needed for estimating the off-diagonal entries $\theta_{kl}$ with $k \neq l$.  
\end{enumerate}  

Finally, the graphon $W$ is estimated using the network histogram estimator as in Appendix \eqref{eq:networkhistogram}
\begin{equation}  
    \widehat{W}(x, y; h) = \widehat{\theta}_{\min\{\lceil nx/h \rceil, K\}, \, \min\{\lceil ny/h \rceil, K\}},  
    \qquad 0 < x, y < 1.  
\end{equation}

\subsection{Case $d=2$}

Let $(G_n^{(1)}, G_n^{(2)})$ be a $2$-layer multiplex network characterized by graphons $W^{(1)}$, $W^{(2)}$, and $W^{(12)}$. To estimate these graphons, we must estimate three $K \times K$ connectivity matrices: $\theta^{(1)}$, $\theta^{(2)}$, and $\theta^{(12)}$. The two-layer MSBM can be viewed as a mixture of $K$ $2$-layer multiplex Erdős–Rényi networks (recall Definition \ref{def:MER}). Recall from \eqref{eq:ERd_expected_shared} that for two  Erdős–Rényi graphs $G_n^{(1)}$ and $G_n^{(2)}$ with adjacency matrices $A^{(1)}$ and $A^{(2)}$, we have  
\[
\mathbb{E}[X_F(G_n^{(1)}, G_n^{(2)})] 
=  \frac{\binom{n}{|V(F)|} |V(F)|!}{|\mathrm{Aut}(F)|} \, p_{(12)}^{|E(F)|},
\]
where $X_F(G_n^{(1)}, G_n^{(2)})$ denotes the number of cross-layer motifs shared across both layers, and 
$
p_{(12)} = \mathbb{E}(A^{(1)}_{ij} \cdot A^{(2)}_{ij})
$
is the cross-moment of the edge variables in the two graphs.

The estimation strategy extends the single-layer approach developed in the previous subsection. After partitioning the two-layer network into $K$ aligned communities, we form, for each layer, the $\binom{K}{2}$ bicolored graphs $\mathcal{G}_{kl}^{(1)}$ (from layer 1) and $\mathcal{G}_{kl}^{(2)}$ (from layer 2), where each graph contains $h$ nodes from community $k$ (colored red) and $h$ nodes from community $l$ (colored blue).

For each pair $(\mathcal{G}_{kl}^{(1)}, \mathcal{G}_{kl}^{(2)})$ of bicolored graphs across the two layers, we define three block matrices:  
\begin{equation}
{\tilde{A}^{kl}}{}^{(i)} = 
\begin{bmatrix}
    {\Theta^{kk}}^{(i)} & {({\Theta^{kl}}^{(i)})}^T \\
    {\Theta^{kl}}^{(i)} & {\Theta^{ll}}^{(i)}
\end{bmatrix}, 
\qquad i \in \{1,2\},
\end{equation}
and  
\begin{equation}
\begin{aligned}
{\tilde{A}^{kl}}{}^{(12)} 
&= \begin{bmatrix}
    {\Theta^{kk}}^{(12)} & {({\Theta^{kl}}^{(12)})}^T \\
    {\Theta^{kl}}^{(12)} & {\Theta^{ll}}^{(12)}
\end{bmatrix} \\
&= \begin{bmatrix}
    {\Theta^{kk}}^{(1)} \circ {\Theta^{kk}}^{(2)} 
    & {({\Theta^{kl}}^{(1)})}^T \circ {({\Theta^{kl}}^{(2)})}^T  \\
    {\Theta^{kl}}^{(1)} \circ {\Theta^{kl}}^{(2)}  
    & {\Theta^{ll}}^{(1)} \circ {\Theta^{ll}}^{(2)},
\end{bmatrix} 
\end{aligned}
\end{equation}
where $\circ$ denotes the Hadamard product of matrices. The block matrix ${\tilde{A}^{kl}}{}^{(12)}$ encodes cross-layer correlations between the two bicolored graphs. In particular, $
\mathbb{E}\!\left(({\Theta^{kl}}^{(12)})_{ij}\right) = \theta^{(12)}_{kl}.
$

We estimate $\theta^{(i)}$ for $i \in \{1,2\}$ following the procedure described in the single-layer case. For the cross-layer connectivity matrix $\theta^{(12)}$, the estimation proceeds as follows: for each pair $(\mathcal{G}_{kl}^{(1)}, \mathcal{G}_{kl}^{(2)})$ and using the method of moments, we estimate $\theta^{(12)}_{kk}$ using $X_{F^{\text{red}}}(\mathcal{G}^{(1)}_{kl}, \mathcal{G}^{(2)}_{kl})$, the number of cross-layer red-node motifs; $\theta^{(12)}_{ll}$ using $X_{F^{\text{blue}}}(\mathcal{G}^{(1)}_{kl}, \mathcal{G}^{(2)}_{kl})$, the  number of cross-layer blue-node motifs; and $\theta^{(12)}_{kl}$ using $X_{F^{\text{rb alternate}}}(\mathcal{G}^{(1)}_{kl}, \mathcal{G}^{(2)}_{kl})$, the number of cross-layer alternate bichromatic motifs associated to $F$, where 
\begin{align}
X_{F^{\text{red}}}(\mathcal{G}^{(1)}_{kl}, \mathcal{G}^{(2)}_{kl}) &= \frac{1}{|\mathrm{Aut}(F)|} \sum_{s \in [h]_{|V(F)|}} \prod_{\{i,j\} \in E(F)} \underbrace{\left[{\tilde{A}^{kl}}{}^{(12)}_{s_is_j}\right]_{11}}_{= {\Theta^{kk}}^{(12)}_{s_i s_j}}, \\
X_{F^{\text{blue}}}(\mathcal{G}^{(1)}_{kl}, \mathcal{G}^{(2)}_{kl}) &= \frac{1}{|\mathrm{Aut}(F)|} \sum_{s \in [h]_{|V(F)|}} \prod_{\{i,j\} \in E(F)} \underbrace{\left[{\tilde{A}^{kl}}{}^{(12)}_{s_is_j}\right]_{22}}_{= {\Theta^{ll}}^{(12)}_{s_i s_j}}, \\
X_{F^{\text{rb alternate}}}(\mathcal{G}^{(1)}_{kl}, \mathcal{G}^{(2)}_{kl}) &= \frac{1}{|\mathrm{Aut}(F^{\text{alt}})|} \sum_{s \in [h]_{|V(F^{\text{alt}})|}^{\star}} \prod_{\{i,j\} \in E(F^{\text{alt}})} \underbrace{\left[{\tilde{A}^{kl}}{}^{(12)}_{s_is_j}\right]_{12}}_{= {\Theta^{kl}}^{(12)}_{s_i s_j}}.
\end{align}

As in the single-layer setting, estimating the diagonal entries $\theta_{kk}^{(1)}$, $\theta_{kk}^{(2)}$, and $\theta_{kk}^{(12)}$ for $1 \leq k \leq K$ does not require constructing bicolored graphs. In particular, for a given $k \in \{1,\dots,K\}$, $\theta_{kk}^{(12)}$ can be estimated directly from the number of cross-layer motifs $F$ between community $k$ in layer 1 and community $k$ in layer 2, regardless of node colors. 

\medskip
\noindent 
To illustrate how to count cross-layer red, blue and alternate motifs, we present a concrete example.

\begin{example}
Consider a $2$-layer multiplex network with $10$ communities and community size $h=4$.

\begin{center}
\begin{tikzpicture}[scale=0.75]
    \fill[yellow!10, rounded corners=5pt, draw=black, thick]
        (-1,-1) rectangle (8,3.5);
    \fill[green!10, rounded corners=5pt, draw=black, thick]
        (-1,-4.5) rectangle (8,-0.5);

    \node[anchor=west, text=yellow!70!black, font=\Large\bfseries] at (8,1.5) {Layer 1};
    \node[anchor=west, text=green!70!black, font=\Large\bfseries] at (8,-2.5) {Layer 2};

    \begin{scope}
        \node[draw, fill=blue!30, circle] (1) at (0,1) {1};
        \node[draw, fill=blue!30, circle] (2) at (0,2.5) {2};
        \node[draw, fill=blue!30, circle] (3) at (1,0) {3};
        \node[draw, fill=blue!30, circle] (4) at (2,1) {4};
        \node[draw, fill=red!30, circle] (5) at (6,1) {5};
        \node[draw, fill=red!30, circle] (6) at (6,0) {6};
        \node[draw, fill=red!30, circle] (7) at (7,1) {7};
        \node[draw, fill=red!30, circle] (8) at (7,0) {8};

        \draw[-] (1) -- (2);
        \draw[-] (2) -- (3);
        \draw[-] (1) -- (3);
        \draw[-] (2) -- (4);
        \draw[-] (3) -- (4);
        \draw[-] (4) -- (5);
        \draw[-] (4) -- (6);
        \draw[-] (5) -- (6);
        \draw[-] (5) -- (7);
        \draw[-] (7) -- (8);
        \draw[-] (5) -- (8);
        \draw[-] (3) -- (6);
    \end{scope}

    \begin{scope}[yshift=-4cm]
        \node[draw, fill=blue!30, circle] (1) at (0,1) {1};
        \node[draw, fill=blue!30, circle] (2) at (0,2.5) {2};
        \node[draw, fill=blue!30, circle] (3) at (1,0) {3};
        \node[draw, fill=blue!30, circle] (4) at (2,1) {4};
        \node[draw, fill=red!30, circle] (5) at (6,1) {5};
        \node[draw, fill=red!30, circle] (6) at (6,0) {6};
        \node[draw, fill=red!30, circle] (7) at (7,1) {7};
        \node[draw, fill=red!30, circle] (8) at (7,0) {8};

        \draw[-] (1) -- (2);
        \draw[-] (2) -- (3);
        \draw[-] (1) -- (3);
        \draw[-] (2) -- (4);
        \draw[-] (4) -- (5);
        \draw[-] (4) -- (6);
        \draw[-] (5) -- (6);
        \draw[-] (5) -- (7);
        \draw[-] (7) -- (8);
        \draw[-] (5) -- (8);
        \draw[-] (3) -- (6);
    \end{scope}

\end{tikzpicture}
\end{center}

Let $F = K_3$, i.e., the triangle motif. The corresponding alternate motif associated with $F$ is the $2$-star $K_{1,2}$, whose two leaves are either both red or both blue. Then we have
\[
X_{F^{\text{rb alternate}}}(\mathcal{G}^{(1)}_{kl}, \mathcal{G}^{(2)}_{kl}) = 2, \quad
X_{F^{\text{red}}}(\mathcal{G}^{(1)}_{kl}, \mathcal{G}^{(2)}_{kl}) = 1, \quad
X_{F^{\text{blue}}}(\mathcal{G}^{(1)}_{kl}, \mathcal{G}^{(2)}_{kl}) = 1.
\]
\end{example}

Having defined the necessary tools and approach, we now present the methodology employed to estimate the graphons $W^{(i)}, i \in \{1,2,(12)\}$ using motif counts. 

\begin{enumerate}
    \item Estimate the community assignment vector $\widehat{z}$ using the profile MLE, as in Appendix \eqref{eq:profileMLE}.  
    \item Estimate the connectivity matrices $\theta^{(i)}$, for $i \in \{1,2,(12)\}$, as follows:  
    \begin{enumerate}
        \item Form $\binom{K}{2}$ pairs of bicolored graphs $(\mathcal{G}^{(1)}_{kl}, \mathcal{G}^{(2)}_{kl})$, where nodes of community $k$ are red and nodes of $l$ are blue.
        \item For each pair, provide moment estimators of :
        \begin{itemize}
            \item $\theta^{(i)}_{kk}, \theta^{(i)}_{kl}, \theta^{(i)}_{ll}$ for $i \in \{1,2\}$ with the same procedure as in the case $d=1$,
            \item $\theta^{(12)}_{kk}$ using $X_{F^{\text{red}}}(\mathcal{G}^{(1)}_{kl}, \mathcal{G}^{(2)}_{kl})$, 
            \item $\theta^{(12)}_{ll}$ using $X_{F^{\text{blue}}}(\mathcal{G}^{(1)}_{kl}, \mathcal{G}^{(2)}_{kl})$,
            \item $\theta^{(12)}_{kl}$ using $X_{F^{\text{rb alternate}}}(\mathcal{G}^{(1)}_{kl}, \mathcal{G}^{(2)}_{kl})$.
        \end{itemize}
    \end{enumerate}
    \item Finally, this provides estimations for all entries of $\theta^{(1)}, \theta^{(2)}, \theta^{(12)}$, and the corresponding graphons are estimated using the network histogram 
    \begin{equation}
\widehat{W}^{(i)}(x,y;h) = \widehat{\theta}^{(i)}_{\min\{\lceil nx/h \rceil, K\}, \min\{\lceil ny/h \rceil, K\}}, \quad 0 < x,y < 1,
\end{equation}
for $i \in \{1,2,(12)\}$.

\end{enumerate}

\subsection{Case $d>2$}

We now extend the methodology to the case of a $d$-layer multiplex network. First, construct $\binom{K}{2}$ collections of bicolored graphs across the $d$ layers.  
For each pair of communities $(k,l)$ with $k \neq l$, the corresponding collection is denoted by $(\mathcal{G}^{(i)}_{kl})_{i \in [d]}$, where nodes of community $k$ are colored red and nodes of community $l$ are colored blue.  

For each $u \in \Lambda_d$, consider the set of bicolored graphs $(\mathcal{G}^{(S(u))}_{kl})$, where $S$ is the map defined in \eqref{eq:mapS}. We then estimate:  
\begin{itemize}
    \item $\theta^{(i)}_{kk}, \; \theta^{(i)}_{kl}, \; \theta^{(i)}_{ll}$ for $i \in [d]$, using the same procedure as in the single-layer case,  
    \item $\theta^{(u)}_{kk}$ via $X_{F^{\text{red}}}\!\left((\mathcal{G}_{kl}^{(S(u))})\right)$, the expected number of cross-layer red-node motifs shared across the $S(u)$ layers,
    \item $\theta^{(u)}_{ll}$ via $X_{F^{\text{blue}}}\!\left((\mathcal{G}_{kl}^{(S(u))})\right)$, the expected number of cross-layer blue-node motifs shared across the $S(u)$ layers,
    \item $\theta^{(u)}_{kl}$ via $X_{F^{\text{rb alt}}}\!\left((\mathcal{G}_{kl}^{(S(u))})\right)$, the expected number of cross-layer alternate bichromatic motifs associated to $F$ that are shared across the $S(u)$ layers. 
\end{itemize}

This yields estimates for all entries of the connectivity matrices $\theta^{(u)}$, for every $u \in \Lambda_d$. Finally, the corresponding network histogram estimator is given by  \begin{equation}
\label{eq:MOMestimator}
\widehat{W}^{(u)}(x,y;h) 
= \widehat{\theta}^{(u)}_{\min\{\lceil nx/h \rceil, K\}, \, \min\{\lceil ny/h \rceil, K\}}, 
\quad 0 < x,y < 1,
\end{equation}
for $u \in \Lambda_d$.

\section{Asymptotic Joint Fluctuation of Multiplex Moments}
\label{sec:Asymptotics}

In this section, we establish the joint asymptotic distribution of motif counts in multiplex networks. Existing results primarily concern the case of a single exchangeable network generated by a graphon $W$. Indeed, \cite{BickelChenLevina2011} proved that subgraph counts have an asymptotically Gaussian distribution. Subsequently, \cite{BhattacharyaChatterjeeJanson2023} refined this result, showing that the Gaussian limit, which they expressed as a linear stochastic integral, may be degenerate when the graphon $W$ is regular with respect to the subgraph (see Definition~\ref{def:reggraphon}). In such cases, the normalized motif count $X_F(G_n)$, with normalization by $n^{1/2}$ rather than $n$, converges in distribution to the sum of a Gaussian component and an independent, non-Gaussian component given by an infinite weighted sum of centered chi-squared variables.  
After that, \cite{Bhattacharya2024} extended these results to the joint distribution of  
$
\left(X_{F_1}(G_n),\ X_{F_2}(G_n),\ \ldots,\ X_{F_r}(G_n)\right),$ 
that is, the counts of multiple motifs within a single graph generated by a graphon $W$. They showed that for motifs where $W$ is regular, each marginal distribution decomposes into two independent components: a Gaussian term and a bivariate stochastic integral. In contrast, for motifs where $W$ is irregular, the marginals remain purely Gaussian, represented as linear stochastic integrals.  

Here, we extend these results to the multiplex setting. We begin by deriving the joint asymptotic distribution of cross-layer motif counts in a $d$- layer multiplex Erdős--Rényi model (see Definition \ref{def:MER}). We then generalize this result to the case of an exchangeable $d$-layer multiplex network. 

Throughout, we recall that the set  
\[
\Lambda_d = \{1,\,2,\,(12),\,3,\,\ldots,\,(12\ldots d)\}
\]  
defined in \eqref{eq:lambda_set} represents the $2^d-1$ indices corresponding to groups of layers in the multiplex. We also recall the definition of $F-$regularity of a graphon given in Definition \ref{def:reggraphon}.

\subsection{Asymptotic Distribution of Multiplex Erdős–Rényi Moments}

Let $d>2$. Assume \((G_n^{(1)}, G_n^{(2)}, \ldots, G_n^{(d)})\) is a sequence of $d-$layer Erdős–Rényi multiplexes with parameters $\mathbf{p} = (p_{(k)})_{k \in \Lambda_d}$. We define the normalized cross-layer motif count vector \(\mathbf{Z}_F\) by
\begin{equation}
\mathbf{Z}_F := \left[Z_F(G_n^{(k)}) \right]_{k \in \Lambda_d}
\label{eq:zf-def}
\end{equation}
where, for each \(k \in \Lambda_d\),
\[
Z_F(G_n^{(k)}) = \frac{X_F(G_n^{(k)}) - \mathbb{E}(X_F(G_n^{(k)}))}{n^{|V(F)| -1}}.
\]
Then we have the following result. 

\begin{theorem}[\textbf{Asymptotic distribution of multiplex Erdős–Rényi moments}]
\label{thm:main}

The vector \(\mathbf{Z}_F\) defined in Equation~\eqref{eq:zf-def} converges in distribution as \(n \to \infty\):
\[
\mathbf{Z}_F\xrightarrow{D} (N_1, N_2, N_{(12)}, \ldots, N_{(12\ldots d)}),
\]
where \(N = (N_k)_{k \in \Lambda_d} \sim \mathcal{N}(\mathbf{0}, \Sigma)\), and the covariance matrix \(\Sigma = (\sigma_{ij})_{i,j \in \Lambda_d}\) is given by
\begin{equation}
\label{eq:sigmaER}
\sigma_{ij} = \frac{(p_{(i)} p_{(j)})^{|E(F)| - 1}}{2|\Aut(F)|^2} \left[p_{(i)} \wedge p_{(j)} - p_{(i)} p_{(j)}\right].
\end{equation}
\end{theorem}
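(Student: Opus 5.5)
The plan is to show that the centered count $X_F(G_n^{(k)})-\mathbb{E}X_F(G_n^{(k)})$ is, to first order, a sum over edges of independent centered variables, and then to apply a multivariate CLT to those sums. Write $v=|V(F)|$, $e=|E(F)|$, and for an unordered pair $\{u,w\}$ set $B^{(k)}_{uw}:=A^{(k)}_{uw}=\prod_{i\in S(k)}A^{(i)}_{uw}$, which is Bernoulli$(p_{(k)})$. By Definition~\ref{def:MER}, the vectors $(B^{(k)}_{uw})_{k\in\Lambda_d}$ are i.i.d.\ across pairs. Substituting $B^{(k)}_{uw}=p_{(k)}+(B^{(k)}_{uw}-p_{(k)})$ into \eqref{eq:generalmotif} and expanding the product over $E(F)$ gives a Hoeffding-type decomposition:
\[
X_F(G_n^{(k)})-\mathbb{E}X_F(G_n^{(k)})=\sum_{r=1}^{e} L_r^{(k)},
\]
where $L_r^{(k)}$ collects the terms containing exactly $r$ centered factors.

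First I will isolate the linear term. Fix an edge $(a,b)\in E(F)$, one of its two orientations, and a pair $\{u,w\}$; there are $(n-2)_{v-2}$ injective maps $s$ with $\{s_a,s_b\}=\{u,w\}$. Hence
\[
L_1^{(k)}=c_{F,n}\,p_{(k)}^{e-1}\sum_{u<w}\bigl(B^{(k)}_{uw}-p_{(k)}\bigr),\qquad c_{F,n}\asymp \frac{2e\,n^{v-2}}{|\mathrm{Aut}(F)|}.
\]
Second, I will show that the terms with $r\ge 2$ are negligible after dividing by $n^{v-1}$. Two summands in $L_r^{(k)}$, $r\ge2$, are correlated only if their sets of centered edges coincide, because the centered factors are independent and have mean zero. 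The union of two such copies therefore has at most $2v-3$ vertices, and counting configurations gives $\mathrm{Var}(L_r^{(k)})=O(n^{2v-3})$. Dividing by $n^{2v-2}$ shows $L_r^{(k)}/n^{v-1}\to0$ in $L^2$, uniformly over the finitely many $k\in\Lambda_d$.

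Third, the vector $\bigl(n^{-1}\sum_{u<w}(B^{(k)}_{uw}-p_{(k)})\bigr)_{k\in\Lambda_d}$ is a normalized sum of $\binom n2$ i.i.d.\ bounded vectors in $\mathbb{R}^{2^d-1}$. The multivariate (Cramér--Wold plus Lindeberg) CLT then gives a Gaussian limit with covariance $\tfrac12\mathrm{Cov}(B^{(i)}_{uw},B^{(j)}_{uw})$. Combining this with $c_{F,n}/n^{v-2}$ and Slutsky's lemma gives joint convergence of $\mathbf{Z}_F$. The covariance has the form $\mathrm{const}\cdot(p_{(i)}p_{(j)})^{e-1}\bigl(\mathbb{E}[B^{(i)}B^{(j)}]-p_{(i)}p_{(j)}\bigr)$. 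The combinatorial constant is fixed by the orientation/automorphism bookkeeping above. I will carry out that counting carefully, with the convention for $X_F$ in \eqref{eq:generalmotif}, to match the prefactor $1/(2|\Aut(F)|^2)$ stated in \eqref{eq:sigmaER}; as a first check, my sketch gives a prefactor of the form $2e^2/|\mathrm{Aut}(F)|^2$, so any $e$-dependence must be reconciled with \eqref{eq:sigmaER}.

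The main obstacle is identifying the mixed moment $\mathbb{E}[B^{(i)}_{uw}B^{(j)}_{uw}]$. Since $B^{(i)}B^{(j)}=\prod_{l\in S(i)\cup S(j)}A^{(l)}_{uw}$, this moment equals $p_{(S(i)\cup S(j))}$. It coincides with $p_{(i)}\wedge p_{(j)}$ exactly when the layer sets are nested, say $S(i)\subseteq S(j)$: then $B^{(j)}\le B^{(i)}$, so the moment is $p_{(j)}\le p_{(i)}$, and the diagonal case gives $p_{(i)}-p_{(i)}^2$ as expected. For non-nested pairs I would interpret $p_{(i)}\wedge p_{(j)}$ as the joint inclusion probability $p_{(S(i)\cup S(j))}$. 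At this point I will state the convention explicitly, or restrict the formula to comparable indices, rather than claim the literal minimum in general.
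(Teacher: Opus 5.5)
Your argument is correct. It follows the paper's route in substance, but in a more elementary form. The paper writes $X_F(G_n^{(k)})$ as a generalized $U$-statistic in uniforms $Y_{ab}$. It then uses the Janson--Nowicki orthogonal decomposition to show the principal degree is $2$; the one-vertex and empty-graph components vanish because there are no latent vertex variables. Finally it applies Janson--Nowicki's joint-normality lemma to the $K_2$ components via Cram\'er--Wold. Your expansion of $\prod_{E(F)}\bigl(p_{(k)}+(B^{(k)}-p_{(k)})\bigr)$ is that decomposition written out by hand for the ER case:
\begin{itemize}
\item $L_1^{(k)}$ is the $K_2$ component;
\item your $O(n^{2|V(F)|-3})$ variance count replaces the principal-degree lemma;
\item the classical CLT for the i.i.d.\ bounded vectors $(B^{(k)}_{uw})_{k\in\Lambda_d}$ replaces the joint-normality lemma.
\end{itemize}
The paper's machinery is what extends to Theorem~\ref{thm:main2}, where latent vertex variables give nonzero lower-degree components. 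Your version is self-contained, and because it uses the true joint law of the per-pair vector, it produces the correct cross-covariances.

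Both discrepancies you flag are errors in the statement, not gaps in your proof, so do not try to reconcile them.

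\textbf{The constant.} The paper uses $|\mathcal{I}_{F,\{1,2\}}|=(|V(F)|-2)!/|\Aut(F)|$, which equals $1/2$ for $F=K_2$. The actual number of copies of $F$ in $K_{|V(F)|}$ containing the edge $\{1,2\}$ is $2|E(F)|(|V(F)|-2)!/|\Aut(F)|$. With this count, the paper's own Proposition~\ref{prop:joint-clt} returns your prefactor $2|E(F)|^2/|\Aut(F)|^2$. As a check, for $F=K_2$ the count $X_F$ is the number of edges and $\mathrm{Var}(Z_F)\to p(1-p)/2$, whereas the stated formula gives $p(1-p)/8$.

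\textbf{The minimum.} The paper sets $A^{(k)}_{ab}=\mathbf{1}\{Y_{ab}\le p_{(k)}\}$ for every $k\in\Lambda_d$ with one common uniform $Y_{ab}$. This is a comonotone coupling. It is consistent with $A^{(k)}=\prod_{l\in S(k)}A^{(l)}$ only if $p_{(k)}=\min_{l\in S(k)}p_{(l)}$ for all $k$, which is not a general MER. For instance, with mutually independent layers, $Z_F(G_n^{(1)})$ and $Z_F(G_n^{(2)})$ are independent. Yet the stated $\sigma_{12}$ is strictly positive whenever $p_{(1)},p_{(2)}\in(0,1)$.

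So state what you actually prove:
\[
\sigma_{ij}=\frac{2|E(F)|^2}{|\Aut(F)|^2}\,\bigl(p_{(i)}p_{(j)}\bigr)^{|E(F)|-1}\bigl(p_{(i\vee j)}-p_{(i)}p_{(j)}\bigr),
\]
where $i\vee j\in\Lambda_d$ is the index with $S(i\vee j)=S(i)\cup S(j)$. This agrees with the minimum form (up to the corrected constant) whenever $S(i)$ and $S(j)$ are nested. It agrees for all pairs only under the comonotone model.

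Within the paper's framework, the repair would be to generate the whole vector $\underline{A}_{ab}$ from a single $Y_{ab}$. One splits $[0,1]$ into $2^d$ intervals whose lengths are the multivariate Bernoulli cell probabilities. The $K_2$-component covariance then involves $p_{(i\vee j)}$ rather than $p_{(i)}\wedge p_{(j)}$.
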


The proof of Theorem \eqref{thm:main} is given in Appendix \ref{proof:main}. 

\subsection{Asymptotic Distribution of Exchangeable Multiplex Moments}

We now turn to the exchangeable framework. Let $d>2$. Consider a sequence of exchangeable $d$-layer multiplex networks  
$
(G_n^{(1)}, G_n^{(2)}, \ldots, G_n^{(d)})
$
that converges in the joint cut metric in the multivariate graphon (see Definition \ref{def:jointcutmetric}). 
Define the vector of centered and scaled cross-layer subgraph counts by 
\begin{equation}
\mathbf{Z}_F:= \left[Z_F(G_n^{(1)}), Z_F(G_n^{(2)}), \underbrace{Z_F(G_n^{(1)}, G_n^{(2)})}_{=Z_F(G_n^{(12)})}, \ldots, \underbrace{Z_F(G_n^{(1)}, G_n^{(2)}, \ldots, G_n^{(d)})}_{=Z_F(G_n^{(12\ldots d)})} \right], \label{eq:ZFvector}
\end{equation}
which we also denote more compactly as
\begin{equation}
\mathbf{Z}_F= \left[Z_F(G_n^{(k)}) \right]_{k \in \Lambda_d}. \label{eq:ZFcompact}
\end{equation}
We denote by $\Lambda_d'$ the subset of $\Lambda_d$ for which the graphons associated to the indices of $\Lambda'_d$ are $F$-irregular. For each $k \in \Lambda_d$, the standardized cross-layer subgraph count is defined by
\begin{equation}
Z_F(G_n^{(k)}) = 
\begin{cases}
\displaystyle \frac{X_F(G_n^{(k)}) - \mathbb{E}(X_F(G_n^{(k)}))}{n^{|V(F)| - 1}} & \text{if $k \notin \Lambda_d'$}, \\[1em]
\displaystyle \frac{X_F(G_n^{(k)}) - \mathbb{E}(X_F(G_n^{(k)}))}{n^{|V(F)| - \frac{1}{2}}} & \text{if if $k \in \Lambda_d'$}.
\end{cases}
\label{eq:ZFdefinition}
\end{equation}

\noindent
Let \(\mathcal{I}_{F, \{1,2\}}\) be the set of subgraphs $F'$ of \(K_{|V(F)|}\) isomorphic to \(F\), such that  $(1,2) \in E(F')$. Define the covariance matrix $\Sigma := (\sigma_{ij})_{i,j \in \Lambda_d \setminus \Lambda_d'}$ by : 
\begin{equation}
\sigma_{ij} = \frac{1}{2(|V(F)| - 2)!^2} \, \Omega_{ij}, \label{equ:5}
\end{equation}
where
\begin{equation}
\begin{aligned}
\Omega_{ij} &= \sum_{\substack{F'_1 \in \mathcal{I}_{F,\{1,2\}} \\ F'_2 \in \mathcal{I}_{F,\{1,2\}}}} \mathbb{E} \left[ C_{ij}(U_1,U_2) \cdot \phi\left(W_{(i)}(U_1,U_2), W_{(j)}(U_1,U_2) \right) \right] \\
&= \sum_{\substack{F'_1 \in \mathcal{I}_{F,\{1,2\}} \\ F'_2 \in \mathcal{I}_{F,\{1,2\}}}} \displaystyle \int_{[0,1]^2}  C_{ij}(x,y) \cdot \phi\left(W_{(i)}(x,y), W_{(j)}(x,y) \right) \, \mathrm{d}x \,\mathrm{d}y\label{eq:Omega}
\end{aligned}
\end{equation}
The auxiliary terms used in this definition are as follows. For any $F'_1, F'_2 \in \mathcal{I}_{F,\{1,2\}}$ and graphon $W_{(k)}$, define
\begin{equation}
t^{-}_{1,2}(U_1,U_2,F'_1,W) := \mathbb{E} \left[ \prod_{(a,b) \in E(F'_1) \setminus \{(1,2)\}} W(U_a, U_b) \,\middle|\, U_1, U_2 \right], \label{eq:tminus}
\end{equation}
\begin{equation}
C_{ij}(U_1,U_2) := t^{-}_{1,2}(U_1,U_2,F'_1,W_{(i)}) \cdot t^{-}_{1,2}(U_1,U_2,F'_2,W_{(j)}), \label{eq:Cij}
\end{equation}
\begin{equation}
\phi(a,b) := \min(a,b) - ab, \quad \text{for } a,b \in [0,1]. \label{eq:phi}
\end{equation}

We are now ready to state the following result. 

\begin{theorem}[\textbf{Asymptotic distribution of General Multiplex Moments}]
\label{thm:main2}

Let $\mathbf{Z}_F$ be defined as in equations \eqref{eq:ZFvector} and \eqref{eq:ZFcompact}. Then we have the following convergence in distribution:
\begin{equation}
\mathbf{Z}_F\xrightarrow{\text{D}} \mathbf{Z} := (Z_1, Z_2, Z_{(12)}, \ldots , Z_{(12\ldots d)}). \label{eq:Zlimit}
\end{equation}
Each limit variable $Z_k$, indexed by $k \in \Lambda_d$, depends on whether the corresponding graphons are $F$-regular or $F$-irregular.

If $k \in \Lambda_d'$ (i.e., the $W_{(k)}$ are $F$-irregular), then the limiting random variable $Z_k$ is given by:
\begin{equation}
Z_k := \int_0^1 \left\{ \frac{1}{|\mathrm{Aut}(F)|} \sum_{a=1}^{|V(F)|} t_a(x, F, W_{(k)}) - \frac{|V(F)|}{|\mathrm{Aut}(F)|} t(F, W_{(k)}) \right\} \, dB_x. \label{equ:6}
\end{equation}

On the other hand, if $k \notin \Lambda_d'$ (i.e. the $W_{(k)}$ are $F$-regular), then the limiting random variable $Z_k$ is given by :
\begin{equation}
Z_k := N_k + \int_0^1 \int_0^1 \left\{ W_F^{(k)}(x,y) - \frac{|V(F)|(|V(F)|-1)}{2|\mathrm{Aut}(F)|} t(F,W_{(k)}) \right\} \, dB_x \, dB_y, \label{equ:7}
\end{equation}
where $\mathbf{N} := (N_k)_{k \in \Lambda_d \setminus \Lambda_d'}$ is a jointly Gaussian vector with zero mean and covariance matrix $\Sigma$ as defined in equation \eqref{equ:5}, and $\mathbf{N}$ is independent of the Brownian $\{B_{t}\}_{t \in [0,1]}$.
\end{theorem}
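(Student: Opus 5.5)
The plan is to follow the conditioning strategy of \cite{BhattacharyaChatterjeeJanson2023,Bhattacharya2024}, treating all $2^d-1$ coordinates at once. Throughout, write $v:=|V(F)|$. Conditionally on the latent vector $U=(U_1,\ldots,U_n)$, I split each coordinate as
\[
X_F(G_n^{(k)})-\mathbb{E}X_F(G_n^{(k)}) = \underbrace{\bigl[X_F(G_n^{(k)})-\mathbb{E}(X_F(G_n^{(k)})\mid U)\bigr]}_{=:R_k} + \underbrace{\bigl[\mathbb{E}(X_F(G_n^{(k)})\mid U)-\mathbb{E}X_F(G_n^{(k)})\bigr]}_{=:T_k}.
\]
The term $T_k$ is a $U$-statistic in the i.i.d.\ uniforms $U_i$. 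Its kernel is $\frac{1}{|\mathrm{Aut}(F)|}\sum \prod_{(a,b)\in E(F)} W_{(k)}(U_{s_a},U_{s_b})$. The term $R_k$ is, given $U$, a polynomial in the centered independent edge vectors $\underline{A}_{ij}-\mathbb{E}(\underline{A}_{ij}\mid U)$, which are independent across pairs by Theorem~\ref{thm:mutlialdous}.

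For $T_k$ I will use the Hoeffding decomposition. The first-order projection equals $n^{v-1}\sum_i \bigl(\frac{1}{|\mathrm{Aut}(F)|}\sum_a t_a(U_i,F,W_{(k)})-\frac{v}{|\mathrm{Aut}(F)|}t(F,W_{(k)})\bigr)$, which lives on the scale $n^{v-1/2}$.

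\emph{Irregular case.} If $W_{(k)}$ is $F$-irregular, I normalize by $n^{v-1/2}$. Then every higher-order projection and the whole of $R_k$ are $O_P(n^{v-1})$, hence negligible. The empirical process $n^{-1/2}\sum_i(\cdot)$ converges, jointly over $k$ and driven by the same uniforms $U_i$, to the linear Wiener integral \eqref{equ:6} against a single Brownian motion $B$.

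\emph{Regular case.} If $W_{(k)}$ is $F$-regular, the first projection vanishes almost surely by \eqref{eq:F-reg}, and I normalize by $n^{v-1}$. The second-order projection is a degenerate $U$-statistic with kernel $W_F^{(k)}(x,y)-\frac{v(v-1)}{2|\mathrm{Aut}(F)|}t(F,W_{(k)})$. By the classical limit theorem for degenerate $U$-statistics (Janson's multiple Wiener integral representation, in the joint form of \cite{Bhattacharya2024}), it converges to the double integral in \eqref{equ:7}. The joint convergence of these linear and double integrals, all with the same $B$, is exactly the multi-motif result of \cite{Bhattacharya2024}, applied to the collection of graphons $(W_{(k)})_{k\in\Lambda_d}$ evaluated on common latent variables.

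For $R_k$, I expand the product over $E(F)$ in the centered edge variables. Terms of degree $\ge 2$ have conditional variance $O(n^{2v-3})$, so they are negligible at scale $n^{v-1}$. The degree-one term is
\[
n^{v-2}\sum_{i<j}\bigl(A^{(k)}_{ij}-W_{(k)}(U_i,U_j)\bigr)\,c_k(U_i,U_j)+o_P(n^{v-1}),
\]
where $c_k(U_i,U_j)$ is obtained by integrating out the other $v-2$ vertices, which yields the $t^-_{1,2}$ factors in \eqref{eq:tminus}. The combinatorial prefactor $1/(v-2)!$ for each placement $F'\in\mathcal{I}_{F,\{1,2\}}$ produces \eqref{equ:5}. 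Given $U$, this is a sum of independent mean-zero vectors, so a conditional Lindeberg CLT (Cramér--Wold over $k$) applies. The conditional covariance is
\[
n^{-2}\sum_{i<j}c_i c_j\,\mathrm{Cov}\bigl(A^{(i)}_{ij},A^{(j)}_{ij}\mid U\bigr).
\]
Here $\mathrm{Cov}(A^{(i)}_{ij},A^{(j)}_{ij}\mid U)=W_{S(i)\cup S(j)}-W_{(i)}W_{(j)}$, and this must be identified with $\phi(W_{(i)},W_{(j)})$ as in \eqref{eq:phi}. The identification is immediate when $S(i)$ and $S(j)$ are nested, mirroring the computation behind Theorem~\ref{thm:main}. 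By the law of large numbers for $U$-statistics, this conditional covariance converges almost surely to $\Omega/(2(v-2)!^2)$.

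The conditional limit $N$ is Gaussian with a deterministic covariance. I will combine the pieces through conditional characteristic functions: $\mathbb{E}[e^{i\langle s,R\rangle}e^{i\langle t,T\rangle}] = \mathbb{E}[\mathbb{E}(e^{i\langle s,R\rangle}\mid U)e^{i\langle t,T\rangle}]$. Replacing the inner expectation by $e^{-s^\top\Sigma s/2}+o_P(1)$ gives the joint limit, with $N$ independent of $B$ (stable convergence). Joint cut-metric convergence (Definition~\ref{def:jointcutmetric}) is used only to ensure that the limiting objects are the graphons $W_{(k)}$.

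I expect the main obstacle to be this final joint step across mixed regimes. One must check that the cross-covariances between $R_k$ for regular $k$ and the $n^{v-1/2}$-scaled irregular coordinates vanish. One must also verify that the conditional Lindeberg condition and the covariance convergence hold uniformly enough in $U$ to pass to the limit inside the expectation. The identification $\mathrm{Cov}(A^{(i)},A^{(j)}\mid U)=\phi(W_{(i)},W_{(j)})$ for non-nested layer sets also needs care; there I would fall back on the union graphon $W_{S(i)\cup S(j)}$.
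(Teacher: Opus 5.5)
Your architecture is sound and is in substance the paper's own. The paper writes each count as a generalized $U$-statistic in $(U_i,Y_{ij})$ with kernel \eqref{eq:f_kernel} and applies the Janson--Nowicki orthogonal decomposition. Its edge-free components are the Hoeffding projections of your $T_k$, which the paper handles by citing the multi-motif theorem of Bhattacharya et al. Its $K_{\{1,2\}}$ component $\mathbb{E}[f^{(k)}\mid U_1,U_2,Y_{12}]-\mathbb{E}[f^{(k)}\mid U_1,U_2]$ is exactly your degree-one edge-noise term. The paper gets independence of $\mathbf N$ from $B$ from the orthogonality of these components (independent $\xi_s$, $\tilde\xi_s$ in Proposition~\ref{prop:2}); your conditional characteristic-function argument is an equally valid way to get it.

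The step that cannot be completed is the one you flagged, and it fails because the stated $\Sigma$ is wrong there, not because of your method. Your conditional covariance $W_{S(i)\cup S(j)}-W_{(i)}W_{(j)}$ is the correct one. The paper obtains $\phi$ only because \eqref{eq:f_kernel} generates every intersection graph from one shared uniform, $A^{(k)}_{ab}=\mathbf 1\{Y_{ab}\le W_{(k)}(U_a,U_b)\}$. This comonotone coupling gives each $G_n^{(k)}$ its correct marginal law, but it forces $\mathbb E[A^{(i)}_{ab}A^{(j)}_{ab}\mid U]=\min(W_{(i)},W_{(j)})$, i.e.\ $W_{S(i)\cup S(j)}=\min(W_{(i)},W_{(j)})$. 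The multiplex model implies this only when $S(i)$ and $S(j)$ are nested; Remark~\ref{rem:jointregularity} itself uses a $W^{(12)}$ that violates it.

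Here is a concrete counterexample to the statement. Take a multiplex Erd\H{o}s--R\'enyi network with mutually independent layers and all parameters in $(0,1)$, so $p_{(12)}=p_{(1)}p_{(2)}$, and any $F$ with an edge. Every index is $F$-regular, and the double integrals vanish because $W^{(k)}_F$ is constant, so $Z_1=N_1$ and $Z_2=N_2$. Since $X_F(G_n^{(1)})$ and $X_F(G_n^{(2)})$ are independent for every $n$, the limit must have $\mathrm{Cov}(N_1,N_2)=0$. But \eqref{eq:Omega} with $\phi(p_{(1)},p_{(2)})=\min(p_{(1)},p_{(2)})-p_{(1)}p_{(2)}>0$ gives $\sigma_{12}>0$. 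The same coupling underlies \eqref{eq:sigmaER}, so you cannot invoke Theorem~\ref{thm:main} for the non-nested case either.

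So do not try to force the identification. Your argument proves the theorem with $\phi(W_{(i)},W_{(j)})$ in \eqref{eq:Omega} replaced by $W_{S(i)\cup S(j)}-W_{(i)}W_{(j)}$. This agrees with the stated $\Sigma$ on the diagonal and for nested pairs. Otherwise it agrees only if $W_{S(i)\cup S(j)}=\min(W_{(i)},W_{(j)})$ almost everywhere on the set where the weights $C_{ij}$ do not all vanish.

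The other obstacles you list are not real:
\begin{itemize}
\item For irregular $k$ you have $R_k=O_P(n^{|V(F)|-1})=o_P(n^{|V(F)|-1/2})$ while $T_k$ is $U$-measurable. These coordinates therefore enter your characteristic-function argument only through $T$, and no mixed-regime cross-covariance needs checking.
\item Each normalized summand of the degree-one edge term is $O(1/n)$ uniformly in $U$, so the conditional Lindeberg condition holds automatically.
\item The conditional characteristic function converges in probability and is bounded by one, so bounded convergence lets you pass to the limit under the outer expectation.
\end{itemize}
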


The proof of Theorem \eqref{thm:main2} is given in Appendix \ref{append:B}.

\begin{remark}[Erdős–Rényi is a particular case of $W-$F regularity]

Note that the multiplex Erdős–Rényi is a particular case of the general multiplex network where all the parameters $(W^{(k)})_{k \in \Lambda_d}$ satisfy the $F-$regularity condition, for all motifs $F$, since the parameters are in this case constants and not functions.   
\end{remark}
\begin{remark}[Notes on regularity]
\label{rem:jointregularity}

Consider a $2$-layer multiplex network. It is straightforward to verify that the $F$-regularity of the individual graphons $W^{(1)}$ and $W^{(2)}$ does not necessarily imply the $F$-regularity of the joint graphon $W^{(12)}$. 
We illustrate this with a counterexample. Consider a $2$-layer multiplex network where both layers are Erdős–Rényi graphs with parameters 
$
W^{(1)} = W^{(2)} = \frac{1}{2},
$
and the joint graphon is given by 
$
W^{(12)}(x,y) = \frac{1}{2} \, \mathbf{1}\{x < \tfrac{1}{2},\, y < \tfrac{1}{2}\}.
$
Note that this is not a multiplex Erdős–Rényi model, since the joint graphon is not constant.  
Since $W^{(1)}$ and $W^{(2)}$ are constant, they are both $F$-regular.  
Taking $F = K_2$, we have
$
t(K_2, W^{(12)}) = \frac{1}{8}, ~\text{and}~
t_1(x, K_2, W^{(12)}) = t_2(x, K_2, W^{(12)}) = \frac{1}{4}\,\mathbf{1}\{x < \tfrac{1}{2}\}.
$
Hence,
$
\bar{t}(x, K_2, W^{(12)}) \neq t(K_2, W^{(12)}),
$
which shows that $W^{(12)}$ is not $K_2$-regular.  
More concretely, suppose that layer~1 represents friendship relations, while layer~2 represents professional relations. Then, within the subgroup corresponding to $(x,y) \in [0,\tfrac{1}{2})^2$ (say for example at EPFL), individuals have a probability of $0.5$ of being friends and colleagues, whereas outside this subgroup the probability is zero.  
This example illustrates that the $F$-regularity of the joint graphon may differ from that of the marginal graphons. Consequently, when studying the asymptotic distribution of $\bigl(Z_F(G_n^{(k)})\bigr)_{k \in \Lambda_2}$, one may, without loss of generality, assume that some of the graphons are $F$-regular while others are not.

\end{remark}

\begin{remark}[Limiting covariance in mixed regimes]
Consider the limiting covariance between $Z_F(G_n^{(i)})$ and $Z_F(G_n^{(j)})$ in the case where $i \in \Lambda_d'$ and $j \in \Lambda_d \setminus \Lambda_d'$, that is, when the two cross-layer motif counts belong to different regimes. It is straightforward to see that, even if we consider the same normalization, the limiting covariance is equal to $0$. Indeed, the limiting Gaussian component $N_j$ of $Z_F(G_n^{(j)})$ is independent of the Brownian motion and the covariance between multiple stochastic integrals of different orders is zero (see \cite[Chapter~1, Section~1.1.2]{Nualart2006}). Thus, the limiting covariance in mixed regimes always vanishes, and therefore does not require further consideration.
\end{remark}

\section{Asymptotic Hypothesis Tests}
\label{sec:Hypo}

Having derived the asymptotic fluctuations of cross-layer motif counts, we are now ready to design hypothesis tests. Fix an integer $d \in \mathbb{N}$. We derive a first test \eqref{eq:test1} for structural similarity in the exchangeable multiplex model, and a second test \eqref{eq:test4} for conditional edge-wise independence in the multiplex Stochastic Block Model (MSBM) (see Definition \ref{def:MSBM}). For simplicity and interpretability, we first consider the case $d=2$ layers. These tests can be extended to the general case with $d>2$ layers (see Remarks \ref{rem:similarsub} and \ref{rem:indepsubset}).
We introduce the following notations, which are relevant for the second test \eqref{eq:test4}. Let $K \geq 2$ denote the number of blocks in the MSBM. For $a,b \in [K]$ and $i \in \{1,2\}$, let $\mathcal{G}_{ab}^{(i)}$ denote the bipartite Erdős–Rényi graph of layer $i$ with nodes from community $a$ and nodes from community $b$. Denote by $n_{ab}$ its number of vertices. When $a=b$, $\mathcal{G}_{ab}^{(i)}$ is no longer bipartite.  
Furthermore, let $(\theta_{ab}^{(i)})_{i \in \Lambda_2}$ be the parameters of the multiplex Erdős–Rényi (MER) network $(\mathcal{G}_{ab}^{(1)}, \mathcal{G}_{ab}^{(2)})$ (recall Definition~\ref{def:MER} for the definition of the MER). 
Finally, note that when $a \neq b$, the subgraph count $X_F(\mathcal{G}_{ab}^{(i)})$ equals zero if the motif $F$ is not bichromatic alternate (see Definition \ref{def:alternate}). Consequently, for the first test \eqref{eq:test1}, any motif structure $F$ can be used, while the second test \eqref{eq:test4} requires the use of a bichromatic alternate motif.

We introduce the following two hypothesis tests (of course, one can imagine all possible tests using cross-layer motif counts) :

\begin{align}
& \underline{\text{Similarity}} ~ H_0 : \mathbb{E}[X_F(G_n^{(1)})] = \mathbb{E}[X_F(G_n^{(2)})] \quad \text{vs} \quad H_1 : \mathbb{E}[X_F(G_n^{(1)})] \neq \mathbb{E}[X_F(G_n^{(2)})], \label{eq:test1}\\[6pt]
& \underline{\text{Edge-wise independence}} ~ H_0 : P \equiv \Bigl( \forall (a,b) \in [K]^2, ~ \theta_{ab}^{(12)} = \theta_{ab}^{(1)} \cdot \theta_{ab}^{(2)} \Bigr)  
\quad \text{vs} \quad 
H_1 : \neg P. \label{eq:test4} 
\end{align}

\subsection{Testing structural similarity}

The first test \eqref{eq:test1} determines whether the expected number of motifs in layer 1 equals that in layer 2 in the multiplex exchangeable framework. This can be interpreted as the the network structure of layer $1$ is the same as that of layer $2$. An example would be when the layers vis-a-vis each other satisfies some distributional invariance. We mention as possible examples that of stationarity (see \cite{SuvegesOlhede2023}) or exchangeability (see \cite{ChandnaMaugis2026}).

\medskip
In order to test structural similarity \eqref{eq:test1} in the multiplex echangeable model, one needs to construct a joint confidence interval for the vector entries of
\begin{equation}
\mathbf{t}_F := \bigl( t(F, W_{(k)}) \bigr)_{k \in \Lambda_d} = \left( \frac{|\mathrm{Aut}(F)|}{\binom{n}{|V(F)|}|V(F)|!} \mathbb{E} \left( X_F(G_n^{(k)})\right) \right)_{k \in \Lambda_d} .
\label{eq:homdens}
\end{equation}

We follow the approach of \cite{Bhattacharya2024} and extended it to the multiplex setting.
To construct joint confidence intervals for the expected number of cross-layer motifs, it is necessary to approximate the quantiles of their joint asymptotic distribution using a sampling procedure, since the asymptotic distribution depends on multivariate graphon $(W_{(k)})_{k \in \Lambda_d}$ through the 1-point conditional homomorphism density
\(
t_a(x, F, W_{(k)})
\), the homomorphism density $t(F,W_{(k)})$ and
the 2-point conditional motif kernel
\(
{W_{(k)}}_{F}(x, y)
\) as established in Theorem \ref{thm:main2}.
In practice, the multivariate graphon is unknown, and we only observe a finite sample of networks
$
\mathbf{G_n} := \left( G_n^{(k)}\right)_{k \in \Lambda_d}.
$
More precisely, we observe only the finite collection \((G_n^{(1)}, \ldots, G_n^{(d)})\) and for a given \(k \in \Lambda_d\) such that \(|S(k)| \geq 2\), one constructs the graph \(G_n^{(k)}\) as the intersection of the graphs \(\{G_n^{(i)}\}, i \in S(k)\); that is, its adjacency matrix is given by the Hadamard product of the corresponding matrices \(\{A_n^{(i)}\}, i \in S(k)\). 

\noindent
For every $k \in \Lambda_d$, the empirical counterparts of $t_a(x, F, W_{(k)})$ and ${W_{(k)}}_{F}(x, y)$ based on the observed sample $\mathbf{G}_n$ are defined as follows. 
The estimator $\tilde{t}_a(v, F, G_n^{(k)})$ (see Definition~\ref{def:1pt} in Appendix \ref{append:E}) is given by the number of injective 1-point conditional homomorphisms of $F$ in the intersection graph $G_n^{(k)}$. 
Similarly, $\widehat{W}_{F}((u,v), G_n^{(k)})$ (see Definition~\ref{def:2pt} in Appendix \ref{append:E}) is defined as an average of the number of injective 2-point conditional homomorphisms of $F$ in the intersection graph $G_n^{(k)}$. 
These estimators are obtained by adapting the constructions of \cite{Bhattacharya2024} to the multiplex framework.

\noindent
Based on these quantities, we construct a sampling statistic vector $\widehat{\mathbf{Z}}_F := \left( \widehat{Z}_F(k) \right)_{k \in \Lambda_d}$, as defined in \eqref{eq:bootstrap-stat} in Appendix \ref{append:E}. By Proposition~\ref{thm:multiplier-bootstrap} in Appendix \ref{append:E}, this vector converges in distribution to the vector $\mathbf{Z} := \left(Z_k \right)_{k \in \Lambda_d}$ of the asymptotic distributions of cross-layer motif counts. The full details of the sampling procedure are provided in Appendix~\ref{append:E}.

\medskip
Now the sampling statistic $\widehat{\mathbf{Z}}_F$ defined in \eqref{eq:bootstrap-stat} depends on whether the $F$-regularity of $W_{(k)}$ holds, i.e. $k \notin \Lambda_d'$, or fails, i.e. $k \in \Lambda_d'$. In practice, it is not known a priori for which indices $k \in \Lambda_d$ the $F$-regularity of $W_{(k)}$ is satisfied. Hence, we cannot directly rely on the quantiles of the sampling vector $\widehat{\mathbf{Z}}_F$. We need then to test $W_{(k)}-F$-regularity for every $k \in \Lambda_d$. We follow and extend the approach of \cite{Bhattacharya2024}. 
Given the observed sequence of networks $\mathbf{G}_n$ defining the multiplex, we compute a set $S$ of indices $k \in \Lambda_d$ for which the $W_{(k)}-F$-regularity condition is rejected. This set acts as the decision rule of $2^d - 1$ hypothesis tests, each testing whether a given $k$ belongs to $\Lambda_d'$ or not. Once this set of indices $S$ is identified from the observed networks $\mathbf{G}_n$, corresponding to the cases where $F$-irregularity is rejected, we refine the sampling vector in \eqref{eq:bootstrap-stat} by replacing the condition $k \in \Lambda_d'$ with $k \in S$.

\smallskip
\noindent
To proceed, we remind the reader of the definition of $W^{(k)}-F$-regularity (Definition~\ref{def:reggraphon}). While Definition~\ref{def:reggraphon} provides a sufficient condition for $F$-regularity of $W_{(k)}$, a necessary and sufficient condition is that the variance of $Z_k$, as defined in \eqref{equ:6} for $k \in \Lambda_d'$, vanishes. Indeed, in the case of a single network, \cite{BhattacharyaChatterjeeJanson2023} showed that the Gaussian limit of subgraph counts---expressed as a linear stochastic integral---may degenerate whenever the underlying graphon $W$ is regular with respect to the subgraph. The same reasoning applies to cross-layer motif counts, so that the $F$-regularity condition of $W_{(k)}$ is equivalent to the degeneracy of the Gaussian limit $Z_k$ for $k \in \Lambda_d'$.  We have the following lemma: 

\begin{lemma}
\label{prop:SIGMA}
The variance of $Z_k$, as defined in \eqref{equ:6}, is denoted by $\sigma^2(k)$ and is given by: 
\begin{equation}
\sigma^2(k) = \frac{1}{|\mathrm{Aut}(F)|^2}  \left[ \sum_{1 \leq a,b \leq |V(F)|} t\bigl(F \bigoplus_{a,b} F, W_{(k)}\bigr) - |V(F)|^2 \cdot t(F,W_{(k)})^2 \right].
\label{equ:vargen}
\end{equation}
where we recall that
\begin{align*}
t\bigl(F \bigoplus_{a,b} F, W_{(k)} \bigr) &= \mathbb{E}\left[ t_a(U_a,F,W_{(k)}) \cdot t_b(U_b,F,W_{(k)}) \right],
\end{align*}
with $(U_a)$ and $(U_b)$ independent uniform random variables on $[0,1]$.
\end{lemma}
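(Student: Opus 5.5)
The plan is to compute the variance of the linear Wiener integral \eqref{equ:6} with the It\^o isometry, and then identify each resulting cross term as a homomorphism density of a vertex join. Write
\[
f_k(x) := \frac{1}{|\mathrm{Aut}(F)|}\sum_{a=1}^{|V(F)|} t_a(x,F,W_{(k)}) - \frac{|V(F)|}{|\mathrm{Aut}(F)|}\, t(F,W_{(k)}).
\]
Since $0\le W_{(k)}\le 1$, each $t_a(\cdot,F,W_{(k)})$ is bounded and measurable, so $f_k\in L^2[0,1]$. Then $Z_k=\int_0^1 f_k(x)\,dB_x$ is a centered Gaussian with $\sigma^2(k)=\int_0^1 f_k(x)^2\,dx$.

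First, I record the mean identity $\int_0^1 t_a(x,F,W_{(k)})\,dx = t(F,W_{(k)})$ for every $a\in V(F)$. This follows from the tower property, because $t_a(x,F,W)$ is the conditional expectation \eqref{eq:1point_cond} given $U_a=x$ and $U_a$ is uniform. Write $g(x)=\sum_a t_a(x,F,W_{(k)})$ and $c=|V(F)|\,t(F,W_{(k)})$. The mean identity gives $\int g = c$, so $\int (g-c)^2 = \int g^2 - c^2$. Hence
\[
\sigma^2(k)=\frac{1}{|\mathrm{Aut}(F)|^2}\Big[\sum_{a,b}\int_0^1 t_a(x,F,W_{(k)})\,t_b(x,F,W_{(k)})\,dx - |V(F)|^2 t(F,W_{(k)})^2\Big].
\]

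Next, I identify $\int_0^1 t_a(x)t_b(x)\,dx$ with $t(F\bigoplus_{a,b}F,W_{(k)})$. Take two copies of $F$ with independent uniform labels, except that vertex $a$ of the first copy and vertex $b$ of the second copy share the same label $x$. Conditionally on that common label, the edge products of the two copies depend on disjoint families of independent uniforms, so the conditional expectation factorizes as $t_a(x)\,t_b(x)$. Integrating over $x$ gives exactly the homomorphism density of the graph obtained by identifying $a$ with $b$ (Definition~\ref{def:vertexjoin}). This is the expression recalled in the statement, read with $U_a=U_b$ as the identified vertex variable. Substituting into the display above yields \eqref{equ:vargen}.

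The main point needing care is this second step: the factorization under conditioning, together with stating precisely that the expectation recalled in the lemma is taken with the glued vertex sharing a single uniform variable. Read literally with independent $U_a,U_b$, the cross term would collapse to $t(F,W)^2$ and the variance would be zero. The rest is routine. Multiplicities from injective versus homomorphism counting do not arise, since everything is stated at the level of densities $t$ and the $1/|\mathrm{Aut}(F)|$ normalization is inherited directly from \eqref{equ:6}.
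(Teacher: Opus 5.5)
Your proof is correct and follows the same route as the paper: It\^o isometry for the deterministic integrand, expansion of the square using $\int_0^1 t_a(x,F,W_{(k)})\,dx = t(F,W_{(k)})$, and identification of $\int_0^1 t_a(x,F,W_{(k)})\,t_b(x,F,W_{(k)})\,dx$ with the vertex-join density $t(F\bigoplus_{a,b}F,W_{(k)})$. Your conditioning argument for that last step is more careful than the paper, which writes the pointwise identity $t_a(x)\,t_b(x) = t(F\bigoplus_{a,b}F,W_{(k)})$ without integrating over $x$. So is your remark that the recalled formula must be read with the glued vertex carrying a single uniform variable: with independent $U_a,U_b$ the cross terms collapse to $t(F,W_{(k)})^2$ and the variance would vanish identically.
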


The proof of Lemma \ref{prop:SIGMA} is given in Appendix \ref{append:F}.

\smallskip
\noindent
Hence, for a given $k \in \Lambda_d$, the $W_{(k)}-F$-regularity condition is equivalent to $\sigma^2(k) = 0$, where $\sigma^2(k)$ is defined in \eqref{equ:vargen}. 
We perform a system of $2^d - 1$ hypothesis tests for $W_{(k)}$-$F$-regularity.  
For a given $k \in \Lambda_d$, the $W_{(k)}$-$F$-regularity testing problem can be formulated as
\begin{equation}
H_0 : k \in \Lambda_d \setminus \Lambda'_d \quad (\equiv \sigma^2(k) = 0) 
\quad \text{versus} \quad 
H_1 : k \in \Lambda'_d \quad (\equiv \sigma^2(k) \neq 0).
\label{eq:hypothesis-test}
\end{equation}
For each $k \in \Lambda_d$, the variance $\sigma^2(k)$ in \eqref{equ:vargen} can be consistently estimated from the observed multiplex  
\begin{equation}
\mathbf{G}_n = \big( G_n^{(k)} \big)_{k \in \Lambda_d},
\label{eq:graph-collection}
\end{equation}
by its empirical analogue:

\begin{equation}
\begin{aligned}
\sigma^2(k, \mathbf{G}_n) 
&:= \frac{1}{|\mathrm{Aut}(F)|^2}  
\Bigg[ \sum_{1 \leq a,b \leq |V(F)|} 
\widehat{t}\!\left(F \bigoplus_{a,b} F, G_n^{(k)}\right) \\
&\hspace{3.5cm} - |V(F)|^2 \, \widehat{t}(F, G_n^{(k)}) ^2 \Bigg],
\end{aligned}
    \label{eq:empirical-varicance-gen}   
    \end{equation}
where for $k \in \Lambda_d$, 
\begin{equation}
\begin{aligned}
\widehat{t}\!\left(F \bigoplus_{a,b} F, G_n^{(k)}\right) 
&= \frac{1}{n} \sum_{v=1}^n 
\tilde{t}_a(v, F, G_n^{(k)}) \cdot \tilde{t}_b(v, F, G_n^{(k)}) = \widehat{t}\left(H, G_n^{(k)}\right)
\end{aligned}
\label{eq:empirical-ta-tb}
\end{equation}
with $\tilde{t}_a(v,F,G_n^{(k)})$ defined in \eqref{eq:empirical_1point_sum}, $H = F \displaystyle\bigoplus_{a,b} F$ is the vertex join graph (see Definition \ref{def:vertexjoin}) and for every subgraph $F$, the estimator $\widehat{t}\left(F, G_n^{(k)}\right)$ is defined as the number of injective homomorphisms of $F$ in the intersection graph $G_n^{(k)}$ i.e. :
\begin{equation}
\widehat{t}(F, G_n^{(k)}) 
= \frac{1}{|[n]_{|V(F)|}|} 
\sum_{s \in [n]_{|V(F)|}} 
\prod_{(a,b) \in E(F)} A^{(k)}_{s_a s_b}.
\label{eq:empirical-t}
\end{equation}

\begin{remark}
    Note that instead of using the estimator $\widehat{t}(F, G_n^{(k)})$ as given in \eqref{eq:empirical-t}, one can use the plug-in estimator $t(F, \widehat{W})$, where $\widehat{W}$ is estimated using the MSBM method-of-moment approach as definded in \eqref{eq:MOMestimator} in Section \ref{sec:Graphon estimation}. 
\end{remark}

Having defined the empirical counterpart of the variance $\sigma^2(k)$, we define the set (full details are provided in Appendix \ref{append:G}) : 
\begin{equation}
    S(F,\mathbf{G}_n) := \big\{ k \in \Lambda_d: \sqrt{n}\,\sigma^2(k, \mathbf{G}_n) > 1 \big\}
    \label{eq:rejection-set-gen1}
\end{equation}
of indices for which the hypothesis of $W_{(k)}-F$
-regularity is rejected. 

\medskip
\noindent
Then, the sampling vector, given by \eqref{eq:bootstrap-stat} is refined as
    \begin{equation}
\widehat{Z}_F(k) =
\begin{cases}
\frac{1}{\sqrt{n}} \displaystyle \sum_{v=1}^n \big(\widehat{t}(v,F,G_n^{(k)}) - \bar{t}(F,G_n^{(k)})\big) Z_v, & \text{if } k \in S(F,\mathbf{G}_n), \\[1em]
\frac{1}{n} \displaystyle \sum_{1 \leq u,v \leq n} \big(\widehat{W}_{F}((u,v), G_n^{(k)}) - \overline{W}_F(G_n^{(k)}\big)\big(Z_u Z_v - \delta_{u,v}\big), & \text{if } k \notin S(F,\mathbf{G}_n).
\end{cases}
\label{eq:Zhatgen}
\end{equation}
Let $\hat{q}_{1-\alpha, F, \mathbf{G}_n}$ denote the $(1-\alpha)$-quantile of the distribution of $\|\widehat{\mathbf{Z}}_F\|_2 \,\big|\, \mathbf{G}_n$, where $\widehat{\mathbf{Z}}_F:= \left(\widehat{Z}_F(k) \right)_{k \in \Lambda_d}.$

\medskip 

For $k \in \Lambda_d,$ define 
\begin{equation}
Z_F(G_n^{(k)}) =
\begin{cases}
\displaystyle \frac{X_F(G_n^{(k)}) - \mathbb{E}\big(X_F(G_n^{(k)})\big)}{n^{|V(F)| - \frac{1}{2}}}, & \text{if } k \in S(F,\mathbf{G}_n), \\[1em]
\displaystyle \frac{X_F(G_n^{(k)}) - \mathbb{E}\big(X_F(G_n^{(k)})\big)}{n^{|V(F)| - 1}}, & \text{if } k \notin S(F,\mathbf{G}_n).
\end{cases}
\label{eq:ZFgen}
\end{equation}

Let \begin{equation}
\mathbf{Z}_F := \left( Z_F(G_n^{(k)})\right)_{k \in \Lambda_d}.
\end{equation}

Then, the joint confidence set for $\mathbf{t}_F$, as defined in \eqref{eq:homdens}, is given by 
\begin{equation}
\mathcal{C}(F,\mathbf{G}_n) := \big\{\mathbf{t}_F : \|\mathbf{Z}_F\|_2 \leq \hat{q}_{1-\alpha, F, \mathbf{G}_n} \big\}.
\label{eq:confsetgen}
\end{equation}

\medskip
In practice, test \eqref{eq:test1} is implemented as follows. From the observed multiplex network $\mathbf{G}_n$, and for each $k \in \Lambda_d$, compute the empirical variance $\sigma^2(k, \mathbf{G}_n)$ defined defined in \eqref{eq:empirical-varicance-gen}. Then form the index set
$
S(F, \mathbf{G}_n) ,
$ as in \eqref{eq:rejection-set-gen1}. 

\noindent
Next, compute the sampling vector
\[ 
\widehat{\mathbf{Z}}_F = \bigl(\widehat{Z}_F(k)\bigr)_{k \in \Lambda_d},
\]
according to \eqref{eq:Zhatgen}. Then calculate its Euclidean norm $\|\widehat{\mathbf{Z}}_F\|_2$. Repeating this procedure $B$ times yields the samples
\[
\bigl\{ \|\widehat{\mathbf{Z}}_F^{(b)}\|_2 : b = 1, \ldots, B \bigr\}, 
\]
which in turn allow us to compute the empirical $(1-\alpha)$-quantile $\hat{q}_{1-\alpha, F, \mathbf{G}_n}$.

\noindent
Finally, for each $k \in \Lambda_d$, define the scaling factors
\begin{equation}
r_k = 
\begin{cases}
\displaystyle \frac{1}{n^{|V(F)| - \tfrac{1}{2}}}, & k \in S(F, \mathbf{G}_n), \\[10pt]
\displaystyle \frac{1}{n^{|V(F)| - 1}}, & k \notin S(F, \mathbf{G}_n),
\end{cases}
\label{eq:scaling_factors}
\end{equation}
and set
\begin{equation}
\label{eq:Zgen1}
\mathbf{Z}_F = \bigl(Z_F(G_n^{(1)}),\, Z_F(G_n^{(2)}),\, Z_F(G_n^{(12)})\bigr), 
\end{equation}
where  
\begin{align*}
Z_F(G_n^{(1)}) &= r_1\bigl(X_F(G_n^{(1)}) - t_1\bigr), \\  
Z_F(G_n^{(2)}) &= r_2\bigl(X_F(G_n^{(2)}) - t_2\bigr), \\  
Z_F(G_n^{(1)}, G_n^{(2)}) &= r_{12}\bigl(X_F(G_n^{(1)}, G_n^{(2)}) - t_{12}\bigr).
\end{align*}

\noindent
We now have all the ingredients required to compute in practice the joint confidence set $\mathcal{C}(F,\mathbf{G}_n)$ defined in \eqref{eq:confsetgen}. We are thus ready to test structural similarity (test \eqref{eq:test1}). Consider the hyperplane
\begin{equation}
\label{equ:hyper1}
\mathcal{H}_0 := \{ t \in \mathbf{t}_F : t_1 -t_2 = 0 \}.
\end{equation}
We have the following:

\begin{lemma}
\label{lem:firsttest}
Let $\mathbf{Z}_F$ be as defined in \eqref{eq:Zgen1}. The distance from $\mathbf{Z}_F$ to the hyperplane $\mathcal{H}_0$ defined in \eqref{equ:hyper1} is 
    \begin{equation}
    \label{equ:dist1}
    \mathcal{D} = \frac{r_1 r_2 |X_F(G_n^{(1)}) - X_F(G_n^{(2)})|}{\sqrt{r_1^2 + r_2^2}}.
    \end{equation} 
\end{lemma}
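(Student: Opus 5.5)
The plan is to read $\mathcal{D}$ as the smallest value of $\|\mathbf{Z}_F\|_2$ compatible with $H_0$. Here $\mathbf{Z}_F$ in \eqref{eq:Zgen1} is an affine function of the unknown parameter vector $t=(t_1,t_2,t_{12})$ with the observed counts $X_F(\cdot)$ held fixed, so
\[
\mathcal{D} \;=\; \min_{t \in \mathcal{H}_0} \|\mathbf{Z}_F(t)\|_2 .
\]
This matches how the confidence set \eqref{eq:confsetgen} is used: $H_0$ is rejected exactly when $\mathcal{H}_0$ misses $\mathcal{C}(F,\mathbf{G}_n)$, i.e.\ when this minimum exceeds $\hat q_{1-\alpha,F,\mathbf{G}_n}$. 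I would state this interpretation first, because the lemma's phrase ``distance from $\mathbf{Z}_F$ to $\mathcal{H}_0$'' mixes parameter space and statistic space.

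\textbf{Step 1.} Push $\mathcal{H}_0$ into $\mathbf{Z}$-space. Write $z=(z_1,z_2,z_{12})$ with $z_i=r_i(X_i-t_i)$, where $X_i$ abbreviates $X_F(G_n^{(i)})$. Since $r_i>0$, the map $t\mapsto z$ is an affine bijection, and $t_i = X_i - z_i/r_i$. The constraint $t_1=t_2$ therefore becomes
\[
\frac{z_1}{r_1}-\frac{z_2}{r_2} = X_1 - X_2 ,
\]
with $z_{12}$ unconstrained. So the image of $\mathcal{H}_0$ is an affine hyperplane $\{z:\langle a,z\rangle = c\}$, where $a=(1/r_1,-1/r_2,0)$ and $c=X_1-X_2$.

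\textbf{Step 2.} Compute the distance from the origin to this hyperplane. This is the standard point-to-hyperplane formula $|c|/\|a\|_2$, which one can also get by a one-line Lagrange multiplier or Cauchy--Schwarz argument: the minimizer has $z_{12}=0$ and $(z_1,z_2)$ parallel to $a$. This gives
\[
\mathcal{D}=\frac{|X_1-X_2|}{\sqrt{r_1^{-2}+r_2^{-2}}}=\frac{r_1r_2\,|X_F(G_n^{(1)})-X_F(G_n^{(2)})|}{\sqrt{r_1^2+r_2^2}},
\]
which is \eqref{equ:dist1}.

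The only real obstacle is the interpretive one in the opening paragraph: making precise which space the distance lives in, and noting that the $t_{12}$ coordinate drops out because $\mathcal{H}_0$ puts no constraint on it. Once that is fixed, Steps 1 and 2 are elementary linear algebra, and the scaling factors $r_1,r_2$ from \eqref{eq:scaling_factors} enter only through the normal vector $a$.
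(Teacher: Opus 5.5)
Your proof is correct and follows the paper's route. The paper also sets $\mathcal{D}=\inf_{t\in\mathcal{H}_0}\|\mathbf{Z}_F\|_2$, lets the free coordinate $t_{12}$ absorb the third term, and then minimizes the remaining weighted quadratic subject to $t_1=t_2$.

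The only difference is in the last computation. The paper substitutes $t_1=t_2=t$, differentiates to get the weighted average $\bigl(r_1^2X_F(G_n^{(1)})+r_2^2X_F(G_n^{(2)})\bigr)/(r_1^2+r_2^2)$, and plugs it back in. You instead change variables to $z$ and read off the answer from the point-to-hyperplane formula. Your version also makes it clear that the normalizing constant the paper places in front of each $t_i$ in its proof has no effect on the result.
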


The proof of Lemma \ref{lem:firsttest} is given in Appendix \ref{append:hypo1}. 

\medskip
Let $\mathcal{D}$ be as defined in \eqref{equ:dist1}. Then, the decision rule of test \eqref{eq:test1}, based on the observed values $X_F(G_n^{(1)})$ and $X_F(G_n^{(2)})$ is
\begin{equation}
\begin{cases}
\text{Fail to reject } H_0 & \text{if } \mathcal{D} \leq \hat{q}_{1-\alpha, F, \mathbf{G}_n} \big|_{1,2}, \\[6pt]
\text{Reject } H_0 & \text{if } \mathcal{D} > \hat{q}_{1-\alpha, F, \mathbf{G}_n} \big|_{1,2},
\end{cases}
\label{eq:decision_rule_test1}
\end{equation}
where $\hat{q}_{1-\alpha, F, \mathbf{G}_n} \big|_{1,2}$ denotes the restricted $(1-\alpha)$-quantile of the distribution of 
\[
\left\| \big(\widehat{Z}_F(1), \widehat{Z}_F(2) \big) \right\|_2 \bigg| \mathbf{G}_n.
\]

\begin{remark}
\label{rem:similarsub}
For a $d$-layer multiplex exchangeable network, suppose we want to test structural similarity between the subset of layers $(G_n^{(i)})_{i \in I}$ and the subset  of layers $(G_n^{(j)})_{j \in J}$, where 
$
I = \{i_1,\ldots,i_n\} \subseteq [d], 
J = \{j_1,\ldots,j_m\} \subseteq [d],  I \cap J = \varnothing.
$
Then the null hypothesis can be formulated as
\begin{align}
\label{eq:subset-test}H_0 : P \equiv \Bigl( \mathbb{E}\left[X_F(G_n^{(i_1)}, \ldots, G_n^{(i_n)} )\right] =  \mathbb{E}\left[X_F(G_n^{(j_1)}, \ldots, G_n^{(j_m)} )\right]\Bigr) \nonumber 
\quad \text{vs} \quad
H_1 : \neg P.
\end{align}
\end{remark}

\subsection{Testing edge-wise independence}
The purpose of the second test \eqref{eq:test4} is to determine whether layers 1 and 2 of a MSBM with $K$ blocks are conditionally edge-wise independent. The most direct approach is to verify whether the entries of the adjacency matrix \(A^{(1)}\) are conditionally independent (given the latent variables) from those of \(A^{(2)}\).

One may wonder why we investigate conditional edge-wise independence within the MSBM framework rather than in the more general multivariate graphon setting. In the multivariate graphon model, conditional edge-wise independence, that is 
$A_{ij}^{(1)} \perp A_{ij}^{(2)} \mid \xi \quad \text{for every } (i,j),$ implies that $t(F, W^{(12)}) = t(F, W^{(1)} \cdot W^{(2)}).$
A natural way to test this is to check whether 
\[
\mathbb{E}\left[X_F(G_n^{(1)}, G_n^{(2)})\right] = t(F, \widehat{W}^{(1)} \cdot \widehat{W}^{(2)}),
\]
where \(\widehat{W}^{(i)}\) denotes the method-of-moments estimator defined in \eqref{eq:MOMestimator} in Section \ref{sec:Graphon estimation}. However, in practice, determining a value for \(t(F, \widehat{W}^{(1)} \cdot \widehat{W}^{(2)})\) is technically challenging, as it requires computing motif densities in the product of two estimated graphons. 
By contrast, the MSBM admits a much simpler structure. It can be viewed as a collection of 
$K^2$ multiplex Erdős–Rényi networks (MER). In particular, there are $K$ MER, each corresponding to one community, and $K^2 - K$ bipartite MER, each corresponding to a pair of distinct communities. Hence, testing egde-wise independence at the graphon-level is reduced to testing edge-wise independence in the multiplex Erdős–Rényi model, which is simpler because the joint parameter $p_{(12)}$ factorizes directly into the product $p_{(1)}\cdot p_{(2)}$ unlike the homomorphism density $t(F, W^{(1)} \cdot W^{(2)})$ which does not admit such a simple decomposition. Furthermore, this approach allows us to use the collections of multiplex Erdős–Rényi moments whose joint asymptotic distribution is obtained in Theorem \ref{thm:main}. In this way, conditional edge-wise independence between the layers in the MSBM can be determined through block-by-block edge-wise independence. 

\medskip
The following equivalence holds in the MSBM model: for every $(i,j)$,
\begin{equation}
\label{eq:can}
A_{ij}^{(1)} \perp A_{ij}^{(2)} \mid \xi 
\quad \Longleftrightarrow \quad 
\forall (a,b) \in [K]^2,~\theta_{ab}^{(12)} = \theta_{ab}^{(1)} \cdot \theta_{ab}^{(2)}.
\end{equation}
If the right-hand side of \eqref{eq:can} holds, then all edge pairs \(A_{ij}^{(1)}\) and \(A_{ij}^{(2)}\) are conditionally independent. Rejecting the null hypothesis in test \eqref{eq:test4} corresponds to rejecting such independence, meaning that there exists (conditional) dependence between at least one pair of edges \(A_{ij}^{(1)}\) and \(A_{ij}^{(2)}\) for some \(i,j \in [N]\). 
 
\medskip
In order to test conditional edge-wise independence (test \eqref{eq:test4}), one needs to construct, for every $a,b \in [K]^2$, a joint confidence interval
$\mathcal{C}^{\mathrm{ER}}_{ab}(F, \mathbf{\mathcal{G}_{ab}})$ for the vector entries of
\begin{equation}
\label{eq:tFER}
\mathbf{t}_F^{ab} := \big(\theta_{ab}^{(k)} \big)_{k \in \Lambda_d} = \left(\left(\frac{|\mathrm{Aut}(F)|}{\binom{n_{ab}}{|V(F)|}|V(F)|!}\mathbb{E}[X_F(\mathcal{G}_{ab}^{(k)})]\right)^{\frac{1}{|E(F)|}}\right)_{k \in \Lambda_d}.
\end{equation}
We use the same procedure as in the exchangeable model, with the key difference that we no longer need to worry about testing $F$-regularity, since the Erdős–Rényi model corresponds to an $F$-regular setting for all parameter values.
For given $a,b \in [K]$, consider the vector of normalized motif counts
$\mathbf{Z}_F^{ab} := 
\Big(Z_F(\mathcal{G}_{ab}^{(1)}),
 Z_F(\mathcal{G}_{ab}^{(2)}),\allowbreak
 Z_F(\mathcal{G}_{ab}^{(1)}, \mathcal{G}_{ab}^{(2)}) \Big)$
with the same normalization as in \eqref{eq:zf-def}. As established in Theorem \ref{thm:main}, the vector $\mathbf{Z}_F^{ab}$ converges in distribution to a multivariate Gaussian with covariance matrix $\Sigma_{ab}$, given by \eqref{eq:sigmaER} in terms of the parameters $\theta_{ab}^{(i)}$, for $i \in \Lambda_2$.
Since the parameters $\theta_{ab}^{(i)}$ are unknown and in practice we only observe a finite sample of networks
$
\mathbf{\mathcal{G}_{ab}} := (\mathcal{G}_{ab}^{(k)})_{k \in \Lambda_2},
$
where $\mathcal{G}_{ab}^{(12)}$ is constructed as the intersection graph of $\mathcal{G}_{ab}^{(1)}$ and $\mathcal{G}_{ab}^{(2)}$, it is necessary to provide an empirical estimator of the covariance matrix $\Sigma_{ab}$. 
We consider the plug-in estimator $\widehat{\Sigma}_{ab}$ obtained by replacing the parameters $\theta_{ab}^{(i)}$ with their method-of-moments estimators $\widehat{\theta}_{ab}$ defined in \eqref{eq:estimates1} in Section \ref{sec:Graphon estimation}.
It is straightforward to see that $\widehat{\Sigma}_{ab}$ is a consistent estimator of $\Sigma_{ab}$, since the method-of-moments estimators $\widehat{\theta}_{ab}$ are consistent (by the law of large numbers, the normalized cross-layer motif counts converge almost surely to their expectation). Thus, by Slutsky's theorem, we obtain that $\mathbf{Z}_F^{ab}$ converges in distribution to a multivariate Gaussian $\mathbf{\widehat{N}}_{ab}$ with covariance $\widehat{\Sigma}_{ab} = (\widehat{\sigma}_{ab}(i,j))_{i,j \in \Lambda_2}$, where 
\[
\widehat{\sigma}_{ab}(i,j) = \frac{(\widehat{\theta}^{(i)}_{ab} \widehat{\theta}^{(j)}_{ab})^{|E(F)| - 1}}{2|\Aut(F)|^2} \left[\widehat{\theta}^{(i)}_{ab} \wedge \widehat{\theta}^{(j)}_{ab} - \widehat{\theta}^{(i)}_{ab}\widehat{\theta}^{(j)}_{ab}\right].
\]
Let $\hat{q}_{{{1-\alpha}, F, \mathbf{\mathcal{G}_{ab}}}}
$ denote the $(1-\alpha)$-empirical quantile
of the conditional distribution of $\|\mathbf{\widehat{N}}_{ab}\|_2 \big| \mathbf{\mathcal{G}_{ab}}$. 
Then, the joint confidence set for $\mathbf{t}_F^{ab}$, as defined in \eqref{eq:tFER}, is given by
\begin{equation}
\label{eq:CIER}
\mathcal{C}^{\mathrm{ER}}_{ab}(F, \mathbf{\mathcal{G}_{ab}}) := \{ \mathbf{t}_F^{ab}: \|\mathbf{Z}_F^{ab} \|_2 \big | \leq \hat{q}_{{1 -\alpha}, F, \mathbf{\mathcal{G}_{ab}}}\}.
\end{equation}

\medskip
\noindent
We are now ready to test edgewise independence (test \eqref{eq:test4}). Consider the hyperplane
\begin{equation}
\mathcal{H}_0 := \bigcap_{(a,b) \in [K]^2} \{t \in \mathbf{t}_F^{ab} : {\theta}^{(12)}_{ab} = {\theta}^{(1)}_{ab}{\theta}^{(2)}_{ab}\}.
\label{equ:hyper4}
\end{equation}
We have the following:
\begin{lemma}
\label{lem:fourthtest}
For every $a,b \in [K]^2$, set $c_{ab} := =
\frac{\binom{n_{ab}}{|V(F)|}|V(F)|!}{|\mathrm{Aut}(F)|}.$ The distance from the collection $\{\mathbf{Z}^{ab}_F : (a,b)\in[K]^2\}$ to the set $\mathcal{H}_0$ defined in \eqref{equ:hyper4} is 
\begin{equation}
\label{equ:dist4}
\mathcal D
=
\sqrt{
\sum_{(a,b)\in[K]^2}
f(\theta_{ab}^{(1)*}, \theta_{ab}^{(2)*})
},
\end{equation}
\end{lemma}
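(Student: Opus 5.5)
The plan is to exploit the product structure of $\mathcal{H}_0$ in \eqref{equ:hyper4}. The constraint $\theta^{(12)}_{ab}=\theta^{(1)}_{ab}\theta^{(2)}_{ab}$ for a given pair $(a,b)$ involves only the coordinates of the block vector $\mathbf{t}_F^{ab}$. Hence $\mathcal{H}_0=\prod_{(a,b)}\mathcal{H}_0^{ab}$, where $\mathcal{H}_0^{ab}:=\{\theta^{(12)}_{ab}=\theta^{(1)}_{ab}\theta^{(2)}_{ab}\}$. The Euclidean distance on the concatenated vector $(\mathbf{Z}^{ab}_F)_{(a,b)\in[K]^2}$ is the $\ell_2$-sum of blockwise distances, and the infimum over a product set splits into a sum of independent infima. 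This gives $\mathcal{D}^2=\sum_{(a,b)}\mathcal{D}_{ab}^2$, where $\mathcal{D}_{ab}$ is the distance from $\mathbf{Z}^{ab}_F$ to (the image of) $\mathcal{H}_0^{ab}$. This reduces the lemma to a single multiplex Erd\H{o}s--R\'enyi block, exactly as in Lemma~\ref{lem:firsttest}. The only difference is that the constraint is now a surface rather than a hyperplane.

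Next I would make the single-block distance explicit in the normalized coordinates. Following \eqref{eq:tFER} and the normalization \eqref{eq:zf-def}, a candidate parameter $\boldsymbol{\theta}=(\theta^{(1)},\theta^{(2)},\theta^{(12)})$ corresponds to the centered point
\[
Z_k(\boldsymbol\theta)=\frac{X_F(\mathcal{G}^{(k)}_{ab})-c_{ab}(\theta^{(k)})^{|E(F)|}}{n_{ab}^{|V(F)|-1}},\qquad k\in\Lambda_2 .
\]
Imposing $\theta^{(12)}=\theta^{(1)}\theta^{(2)}$ leaves two free variables $(\theta^{(1)},\theta^{(2)})$. I define
\[
f(\theta^{(1)},\theta^{(2)}) := n_{ab}^{-2(|V(F)|-1)}\sum_{k\in\{1,2\}}\bigl(X_F(\mathcal{G}^{(k)}_{ab})-c_{ab}(\theta^{(k)})^{|E(F)|}\bigr)^2 + n_{ab}^{-2(|V(F)|-1)}\bigl(X_F(\mathcal{G}^{(12)}_{ab})-c_{ab}(\theta^{(1)}\theta^{(2)})^{|E(F)|}\bigr)^2 .
\]
Then $\mathcal{D}_{ab}^2=\min_{[0,1]^2} f=f(\theta^{(1)*}_{ab},\theta^{(2)*}_{ab})$, where $(\theta^{(1)*}_{ab},\theta^{(2)*}_{ab})$ is a minimizer. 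Existence of the minimizer follows from continuity of $f$ on the compact square $[0,1]^2$. Summing over $(a,b)$ gives \eqref{equ:dist4}.

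To characterize $\theta^{*}$, I would write the first-order conditions $\partial_{\theta^{(1)}}f=\partial_{\theta^{(2)}}f=0$, handling boundary points separately. Equivalently, I would use Lagrange multipliers on the constraint surface, which shows that the residual vector is normal to the surface at the projection point. It also helps to reparametrize by $u_k=(\theta^{(k)})^{|E(F)|}$. The constraint then becomes the smooth surface $u_{12}=u_1u_2$, a hyperbolic paraboloid, which makes the normal-vector condition polynomial.

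The main obstacle is that, unlike Lemma~\ref{lem:firsttest}, the null set is not affine, so no closed form for the projection is available. The argument must therefore rest on existence and the variational characterization rather than an explicit formula. It must also justify that the minimum is attained at interior points, or otherwise handle the boundary of $[0,1]^2$. I would settle for that characterization and state $\theta^*$ as the (possibly non-unique) minimizer.
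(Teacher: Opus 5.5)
Your proposal is correct and follows the paper's proof. Like the paper, you write $\mathcal{H}_0$ as a Cartesian product of the blockwise sets $\mathcal{H}_0^{ab}$ so that $\mathcal{D}^2$ splits into a sum of blockwise infima, substitute $\theta^{(12)}_{ab}=\theta^{(1)}_{ab}\theta^{(2)}_{ab}$ into the squared norm to obtain $f$, and take $(\theta^{(1)*}_{ab},\theta^{(2)*}_{ab})$ as its minimizer via first-order conditions. Your extra care is worthwhile: existence by compactness of $[0,1]^2$, handling the boundary, and defining $\theta^*$ as a genuine minimizer rather than as any solution of $\partial f=0$, helped by the substitution $u_k=(\theta^{(k)})^{|E(F)|}$. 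This matters because the paper's stationarity system in $\theta$-coordinates is always solved by the spurious point $\theta^{(1)*}_{ab}=\theta^{(2)*}_{ab}=0$ once $|E(F)|\ge 2$.
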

where $\theta_{ab}^{(1)*}, \theta_{ab}^{(2)*}$ are the solutions of the non-linear system 
\begin{equation}
\label{eq:system}
\begin{cases}
-\dfrac{2c_{ab}|E(F)|}{n_{ab}^{2(|V(F)|-1)}}(\theta_{ab}^{(1)*})^{|E(F)|-1}
\Big(X_F(\mathcal G_{ab}^{(1)})-c_{ab}(\theta_{ab}^{(1)*})^{|E(F)|}\Big)
\\ \qquad
-2c_{ab}|E(F)|(\theta_{ab}^{(1)*}\theta_{ab}^{(2)*})^{|E(F)|-1}\theta_{ab}^{(2)*}
\Big(X_F(\mathcal G_{ab}^{(1)},\mathcal G_{ab}^{(2)})
- c_{ab}(\theta_{ab}^{(1)*}\theta_{ab}^{(2)*})^{|E(F)|}\Big)
=0,
\\[10pt]
-2c_{ab}|E(F)|(\theta_{ab}^{(2)*})^{|E(F)|-1}
\Big(X_F(\mathcal G_{ab}^{(2)})-c_{ab}(\theta_{ab}^{(2)*})^{|E(F)|}\Big)
\\ \qquad
-2c_{ab}|E(F)|(\theta_{ab}^{(1)*}\theta_{ab}^{(2)*})^{|E(F)|-1}\theta_{ab}^{(1)*}
\Big(X_F(\mathcal G_{ab}^{(1)},\mathcal G_{ab}^{(2)})
- c_{ab}(\theta_{ab}^{(1)*}\theta_{ab}^{(2)*})^{|E(F)|}\Big)
=0,
\end{cases}
\end{equation}
and $f$ is the function defined by 
\begin{equation}
\begin{aligned}
f(x,y)
=
\frac{1}{n_{ab}^{2(|V(F)|-1)}} \Big[
\big(&X_F(\mathcal G_{ab}^{(1)})-c_{ab}(x)^{|E(F)|}\big)^2
+
\big(X_F(\mathcal G_{ab}^{(2)})-c_{ab}(y)^{|E(F)|}\big)^2
\\& +
\big(X_F(\mathcal G_{ab}^{(1)},\mathcal G_{ab}^{(2)})
- c_{ab}(xy)^{|E(F)|}\big)^2\Big].
\end{aligned}
\end{equation}

The proof of Lemma \ref{lem:fourthtest} is given in Appendix \ref{append:hypo2}.

\medskip
Let $\mathcal{D}$ be as defined in \eqref{equ:dist4}. Then, the decision rule of test \eqref{eq:test4}, based on the observed values $X_F(\mathcal{G}_{ab}^{(1)})$, $X_F(\mathcal{G}_{ab}^{(2)})$ and $X_F(\mathcal{G}_{ab}^{(1)},\mathcal{G}_{ab}^{(2)})$ is
\begin{equation}
\label{eq:composite_decision_rule}
\begin{cases}
\text{Fail to reject } H_0 & \text{if } \mathcal{D} \leq \hat{q}_{{{1-\alpha}, F, \mathbf{\mathcal{G}_{ab}}}}, \\
\text{Reject } H_0 & \text{otherwise}.
\end{cases}
\end{equation}

\begin{remark}
\label{rem:indepsubset}
For a $d$-layer MSBM, suppose we want to test whether the subset of layers $(G_n^{(i)})_{i \in I}$ is jointly  edge-wise independent of the subset $(G_n^{(j)})_{j \in J}$ conditionally on the latent variables, where 
$
I = \{i_1,\ldots,i_n\} \subseteq [d], 
J = \{j_1,\ldots,j_m\} \subseteq [d],  I \cap J = \varnothing.
$
Then the null hypothesis can be formulated as
\begin{align}
\label{eq:subset-test}H_0 : P \equiv \Bigl( \forall (a,b) \in [K]^2, ~ \theta_{ab}^{(i_1, \ldots, j_m)} = \theta_{ab}^{(i_1, \ldots, i_n)} \theta_{ab}^{(j_1, \ldots, j_m)} \Bigr) \nonumber 
\quad \text{vs} \quad
H_1 : \neg P.
\end{align}

\end{remark}

\section{Discussion}

In this paper, we developed a framework to study higher-order dependencies in multiplex networks through the introduction of cross-layer motifs. Our approach extends classical motif-based statistics to account for the interactions between layers, providing a characterization of the moments of a multiplex network. By capturing how subgraph patterns are shared across multiple layers, these statistics enable a more nuanced understanding of inter-layer dependence and allow for the estimation of the multivariate graph limit describing a $d$-layer network.

Several difficulties arise when moving from the unilayer to the multiplex exchangeable setting, both for estimation and for inference. On the estimation side, we introduced the multiplex stochastic block model as an approximation of the multivariate graph limit. This allows one to go beyond classical approaches based only on edge densities, and instead use arbitrary cross-layer motif counts. This required setting up a moment-based methodology to express the connectivity matrix coefficients in terms of these higher-order statistics. 

On an other hand, deriving the joint distribution of cross-layer motif counts is also more involved. In particular, defining a multivariate version of the $F$-regularity condition is nontrivial, and we have shown that multiplex networks may exhibit mixed regimes where some parameters satisfy a regularity condition while others do not. This leads to different types of limiting behavior coexisting within the same model. Another issue is testing edge-wise conditional independence across layers. Unlike in the Erdős–Rényi case, homomorphism densities in the exchangeable model do not factorize in a way that makes such tests straightforward. We bypassed this difficulty by working within the multiplex stochastic block model and performing block-by-block tests based on multiplex Erdős–Rényi moments. Overall, this paper addresses the fundamental question of how to define, estimate, and interpret meaningful statistics for multiplex networks that capture interactions across layers.

\appendix

\newpage

\section{Proof of Theorem \ref{thm:main}}
\label{proof:main}

\begin{proof}
Let $k \in \Lambda_d$. We begin by expressing the number of isomorphic copies of the graph \(F\) in \(G_n^{(k)}\) as a generalized U-statistic. Let \( \{Y_{ij} : 1 \leq i < j \leq n\} \) be an i.i.d. sequence of \( U[0,1] \) random variables.  Then for all $k \in \Lambda_d$,
\begin{equation}
\begin{aligned}
\label{eq:u-stat}
X_F(G_n^{(k)}) 
&= \sum_{1 \leq i_1 < \ldots < i_{|V(F)|} \leq n } f^{(k)}\left(Y_{i_1i_2}, \ldots, Y_{i_{|V(F)|-1}i_{|V(F)|}} \right) \\
&:= S_{n,|V(F)|}(f^{(k)}),
\end{aligned}
\end{equation}

where \(f^{(k)}\) is a symmetric kernel function depending on the edge indicators of the graph.

More precisely, the kernel \(f^{(k)}\) is defined as
\begin{equation}
f^{(k)}\left(Y_{12}, \ldots, Y_{|V(F)|-1\,|V(F)|} \right) = \sum_{F' \in \mathcal{I}_F} \prod_{(a,b) \in E(F')} \mathbf{1}\{Y_{ab} \leq p_{(k)}\},
\label{eq:kernel}
\end{equation}
where \(\mathcal{I}_F\) denotes the set of injective homomorphic copies of \(F\), and \(\mathbf{1}\{\cdot\}\) is the indicator function.

Next, we introduce an orthogonal series expansion of the kernel \(f^{(k)}\), which will in turn induce a corresponding expansion for the U-statistic \(S_{n,|V(F)|}(f^{(k)})\). This expansion allows us to isolate the leading term that governs the asymptotic distribution.

In the remainder of the proof, we will show that \(S_{n,|V(F)|}(f^{(k)})\) has the same limiting distribution as the first non-vanishing term in its orthogonal decomposition. We refer the reader to the framework of the orthogonal decomposition of generalized U-statistics, presented in detail in the supplementary material \ref{append:Ortho}.

\medskip
\noindent
To begin, we first recall a lemma from \cite{JansonNowicki1991} that connects the asymptotic behavior of the full U-statistic with its projection onto the component of minimal degree.

\begin{lemma}[\cite{JansonNowicki1991} ]\label{lem:principal-term}
Let $k \in \Lambda_d$. Let \( l \) be the principal degree of \( f^{(k)} \). Then
\begin{equation}
n^{l/2 - |V(F)|} S_{n,|V(F)|}(f^{(k)}) \approx n^{l/2 - |V(F)|} S_{n,|V(F)|}(f^{(k)}_{(l)}),
\label{eq:approximation}
\end{equation}
where the approximation is in the sense of convergence in distribution. Consequently, if
\begin{equation}
n^{l/2 - |V(F)|} S_{n,|V(F)|}(f^{(k)}_{(l)}) \xrightarrow{D} \xi,
\label{eq:limit-principal}
\end{equation}
for some non-degenerate random variable \(\xi\), then
\begin{equation}
n^{l/2 - |V(F)|} S_{n,|V(F)|}(f^{(k)}) \xrightarrow{D} \xi.
\label{eq:limit-full}
\end{equation}
\end{lemma}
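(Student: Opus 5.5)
The plan is to follow the Hoeffding-type orthogonal decomposition of generalized $U$-statistics recalled in Appendix~\ref{append:Ortho}, and to show that every component of degree strictly larger than the principal degree $l$ is negligible after the scaling $n^{l/2-|V(F)|}$. Write $v:=|V(F)|$. The kernel $f^{(k)}$ in \eqref{eq:kernel} is a square-integrable function of the edge variables $(Y_{ab})_{1\le a<b\le v}$, since it is bounded by $|\mathcal{I}_F|$. We expand it in $L^2$ as $f^{(k)}=\sum_{H} f^{(k)}_H$. Here $H$ ranges over subgraphs of $K_v$ (sets of edges, together with the vertices they span). Each $f^{(k)}_H$ depends only on $(Y_{ab})_{(a,b)\in E(H)}$ and is completely degenerate: its conditional expectation given any strict subfamily of these variables vanishes. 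We then group the components by degree, $f^{(k)}_{(m)}:=\sum_{H:\,|V(H)|=m} f^{(k)}_H$. By linearity,
\[
S_{n,v}(f^{(k)})=\sum_{m\ge l} S_{n,v}\bigl(f^{(k)}_{(m)}\bigr),
\]
where $l$ is the smallest $m$ for which $f^{(k)}_{(m)}\neq 0$.

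The key step is a variance bound for each degree-$m$ piece:
\[
\operatorname{Var}\, S_{n,v}\bigl(f^{(k)}_{(m)}\bigr)\le C\, n^{2v-m}.
\]
To get it, expand the second moment as a double sum over pairs of increasing $v$-tuples $\mathbf i,\mathbf j\subset[n]$ and pairs of components $H,H'$ of degree $m$. The expectation of the product $f_H(Y_{\mathbf i(H)})\,f_{H'}(Y_{\mathbf j(H')})$ vanishes unless the two embedded edge sets $\mathbf i(E(H))$ and $\mathbf j(E(H'))$ coincide. Indeed, if some edge variable appeared in only one factor, conditioning on all the other variables and using complete degeneracy would give zero. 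Coincidence of the edge sets forces the two embedded vertex sets $\mathbf i(V(H))$ and $\mathbf j(V(H'))$ to be the same set of $m$ indices. The number of such pairs $(\mathbf i,\mathbf j)$ is $O(n^{m}\cdot n^{v-m}\cdot n^{v-m})=O(n^{2v-m})$. Each summand is bounded by $\|f_H\|_2\|f_{H'}\|_2$ by Cauchy--Schwarz, and there are finitely many $H,H'$, which yields the bound. The same argument shows that components of different degrees are orthogonal. Hence
\[
\operatorname{Var}\Bigl(S_{n,v}(f^{(k)})-S_{n,v}\bigl(f^{(k)}_{(l)}\bigr)\Bigr)=\sum_{m>l}\operatorname{Var}\, S_{n,v}\bigl(f^{(k)}_{(m)}\bigr)=O\bigl(n^{2v-l-1}\bigr).
\]

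Multiplying this difference by $n^{l/2-v}$, its variance is $O(n^{l-2v}\cdot n^{2v-l-1})=O(n^{-1})\to 0$. The difference has mean zero: all components except the constant $f_\emptyset$ have zero mean, and the constant term, if present, has degree $0\le l$. So Chebyshev's inequality shows that the difference tends to $0$ in probability. This gives the approximation \eqref{eq:approximation}. If moreover \eqref{eq:limit-principal} holds, Slutsky's lemma immediately gives \eqref{eq:limit-full}.

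I expect the main obstacle to be the combinatorial orthogonality step. One must check carefully that nonzero covariance forces the two embedded copies to share exactly the $m$ vertices spanned by $H$, and that no vertex-level variables create extra correlations. In the multiplex Erdős--Rényi setting there are no latent vertex variables, only the i.i.d.\ $Y_{ij}$, so complete degeneracy with respect to edge variables suffices. One should also note that $H$ spans no isolated vertices, so the degree is exactly the number of vertices carried by the edges of $H$.
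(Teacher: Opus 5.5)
The paper gives no proof of this lemma; it only cites Janson--Nowicki. Your argument is correct and is essentially the standard proof behind that citation. Only pairs of index tuples whose embedded copies of $H$ and $H'$ coincide contribute to the second moment, so the degree-$m$ part has variance $O(n^{2v-m})$ with $v=|V(F)|$. Every component above the principal degree is therefore $O(n^{-1/2})$ in $L^2$ after scaling by $n^{l/2-v}$, and Chebyshev plus Slutsky finish. You in fact get the stronger $L^2$ approximation, and the non-degeneracy of $\xi$ is not needed.

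Two points should be made explicit.

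First, the centering. With the paper's convention in \eqref{eq:principal-degree}, the principal degree is the least $s\ge 1$ with $f^{(k)}_{(s)}\neq 0$. With that convention and the uncentered kernel \eqref{eq:kernel}, the lemma as literally written is false. The difference then contains the deterministic term $n^{l/2-v}\binom{n}{v}\mathbb{E}f^{(k)}$, which diverges, so your claim that the difference has mean zero fails. You avoid this by putting the constant into a degree-$0$ component, so that $l=0$ whenever $\mathbb{E}f^{(k)}\neq 0$. That removes the contradiction, but it makes the statement vacuous in exactly the case of interest, since the limit is then deterministic. The meaningful version, and the one Proposition~\ref{prop:joint-clt} actually uses, comes from running your argument on $f^{(k)}-\mathbb{E}f^{(k)}$, i.e.\ on $S_{n,v}(f^{(k)})-\binom{n}{v}\mathbb{E}f^{(k)}$, where $l\ge 1$ as in the paper. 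You should state the lemma in that form.

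Second, you work with edge variables only. This suffices for the multiplex Erd\H{o}s--R\'enyi kernel, and it agrees with the paper's computation that the components indexed by $K_{\{a\}}$ and $E_{\{a,b\}}$ vanish. The framework of Appendix~\ref{append:Ortho} also carries vertex variables $U_i$, but your proof extends verbatim there. You only need to require the two embedded copies to coincide as graphs, vertices as well as edges. A vertex of one copy that is missing from the other can be integrated out by degeneracy, exactly like an unmatched edge.
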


We now state the main asymptotic result for the vector of centered and normalized U-statistics, which will allow us to complete the proof of Theorem~\ref{thm:main}.

\begin{proposition}\label{prop:joint-clt}
Let $k \in \Lambda_d$. Suppose that \( f^{(k)} \in L^2(K_{|V(F)|}) \) is symmetric and has principal degree \( l \). Let \(\Gamma_l\) denote the collection of non-isomorphic graphs with \(l\) vertices. Define
\begin{equation}
\mathbf{S} := \left[\underbrace{n^{l/2 - |V(F)|} \left(S_{n,|V(F)|}(f^{(k)}) - \binom{n}{|V(F)|} \mathbb{E}(f^{(k)})\right)}_{=: S_k}\right]_{k \in \Lambda_d}.
\label{eq:Sk-def}
\end{equation}
Then, as \( n \to \infty \),
\begin{equation}
\mathbf{S} \xrightarrow{D} \mathcal{N}(\mathbf{0}, \Sigma),
\label{eq:Sk-convergence}
\end{equation}
where the covariance matrix \(\Sigma = (\sigma_{ij})_{i,j \in \Lambda_d}\) is given by
\begin{equation}
\sigma_{ij} = \frac{1}{[(|V(F)| - l)!]^2} \sum_{G \in \Gamma_l} \frac{1}{|\mathrm{Aut}(G)|} \mathbb{E}\left(f_G^{(i)} \cdot f_G^{(j)}\right).
\label{eq:covariance-sigma}
\end{equation}
\end{proposition}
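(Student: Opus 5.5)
We prove Proposition~\ref{prop:joint-clt} by reducing the joint statement to univariate limits via the Cramér--Wold device and then invoking the univariate theory of \cite{JansonNowicki1991} for a single generalized U-statistic. The crucial observation is that all the kernels $f^{(k)}$, $k\in\Lambda_d$, are functions of the \emph{same} i.i.d. family $\{Y_{ij}\}$. Concretely, in the multiplex Erdős--Rényi coupling we realise the multivariate Bernoulli edge vector by a single uniform $Y_{ij}$ per pair, with the intersection indicator for $k$ given by $\mathbf 1\{Y_{ij}\le p_{(k)}\}$. This coupling is consistent when the events are nested, and in general we replace it by a common $U[0,1]$-coding of the vector $\underline A_{ij}$. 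For fixed $\lambda=(\lambda_k)_{k\in\Lambda_d}$ the linear combination
\[
\sum_k\lambda_k S_k=n^{l/2-|V(F)|}\Big(S_{n,|V(F)|}(f_\lambda)-\tbinom{n}{|V(F)|}\mathbb E f_\lambda\Big),\qquad f_\lambda:=\sum_k\lambda_k f^{(k)},
\]
is therefore again a centered generalized U-statistic with a symmetric $L^2$ kernel $f_\lambda$. It then suffices to show that $\sum_k\lambda_k S_k\to\mathcal N(0,\lambda^\top\Sigma\lambda)$ for every $\lambda$.

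\textbf{Step 1 (orthogonal decomposition).} Expand each $f^{(k)}$ in the orthogonal decomposition of Appendix~\ref{append:Ortho}, $f^{(k)}=\sum_{G\subseteq K_{|V(F)|}} f^{(k)}_G$, where $f_G^{(k)}$ depends only on the edge variables of $G$ and is orthogonal to all functions of fewer edges. This decomposition is linear, so $(f_\lambda)_G=\sum_k\lambda_kf^{(k)}_G$. All components of degree $<l$ vanish for every $k$, hence for $f_\lambda$. Consequently the principal degree of $f_\lambda$ is at least $l$, and its degree-$l$ part is $\sum_k\lambda_k f^{(k)}_{(l)}$.

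\textbf{Step 2 (reduction to the principal term).} Apply Lemma~\ref{lem:principal-term} to $f_\lambda$, or more precisely its variance form from \cite{JansonNowicki1991}. The components of degree $>l$ contribute a variance of order $n^{2|V(F)|-l'}$ with $l'>l$, so after multiplying by $n^{l/2-|V(F)|}$ they are $o_P(1)$. This reduces the problem to $n^{l/2-|V(F)|}S_{n,|V(F)|}((f_\lambda)_{(l)})$.

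\textbf{Step 3 (CLT for the principal term).} Regroup $S_{n,|V(F)|}((f_\lambda)_{(l)})$ as a sum over copies $G'$ of graphs $G\in\Gamma_l$ embedded in $K_n$. Each such copy appears in $\binom{n-l}{|V(F)|-l}\sim n^{|V(F)|-l}/(|V(F)|-l)!$ vertex sets. The resulting sum $\sum_{G'}(f_\lambda)_{G}(Y_{G'})$ is a degenerate, completely orthogonal sum of roughly $n^l/|\mathrm{Aut}(G)|$ terms. Janson--Nowicki's CLT for such sums gives Gaussianity, with variance
\[
\frac{1}{[(|V(F)|-l)!]^2}\sum_{G\in\Gamma_l}\frac{1}{|\mathrm{Aut}(G)|}\mathbb E\big[(f_\lambda)_G^2\big].
\]
Expanding the square by bilinearity yields exactly $\lambda^\top\Sigma\lambda$ with $\sigma_{ij}$ as in \eqref{eq:covariance-sigma}. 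Cramér--Wold then finishes the proof.

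\textbf{Main obstacle.} The hardest step is Step 3, specifically justifying the Gaussian limit for the degree-$l$ term when $l>2$. There the summands are products over several edges, and one needs the fourth-moment (de Jong type) or method-of-moments argument of \cite{JansonNowicki1991}. Degeneracy is also a risk: if $\lambda^\top\Sigma\lambda=0$, the limit is simply $0$, which is still consistent. I would cite their theorem directly for the kernel $f_\lambda$, verifying only symmetry, square integrability and the principal degree. In the application to Theorem~\ref{thm:main}, $l=2$, and the degree-$2$ components reduce to functions of a single edge variable, so there the statement is a classical Lindeberg CLT.
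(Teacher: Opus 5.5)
Your route is essentially the paper's: Cram\'er--Wold, linearity of the orthogonal decomposition so that $(f_\lambda)_G=\sum_k\lambda_k f^{(k)}_G$, reduction to the degree-$l$ part, and a Janson--Nowicki limit theorem for that part. The paper reduces each $S_k$ first and then combines. Forming $f_\lambda$ first and removing the higher-degree remainder in $L^2$, as you do, is the cleaner way to obtain a \emph{joint} approximation, and it handles a degenerate direction $\lambda^\top\Sigma\lambda=0$ automatically.

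\textbf{Gap in Step 3.} The paper's proof has the same gap, through the lemma it quotes. Gaussianity of the principal term does not follow from symmetry, square integrability and principal degree $l$ alone. Your variance formula is correct, but a normalized orthogonal sum of degenerate terms need not be asymptotically normal.

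A first counterexample uses vertex variables. Take $|V(F)|=2$ and $f(U_1,U_2,Y_{12})=h(U_1)h(U_2)$ with $\mathbb{E}h=0$ and $\mathbb{E}h^2=1$. Then $l=2$ and the principal part is supported on $E_{\{1,2\}}$. Moreover $n^{-1}S_{n,2}(f)=\tfrac12\big(n^{-1/2}\sum_i h(U_i)\big)^2-\tfrac{1}{2n}\sum_i h(U_i)^2\to\tfrac12(\chi^2_1-1)$. This limit has the predicted variance $1/2$ but is not normal. It is exactly the mechanism behind the double stochastic integral in Theorem~\ref{thm:main2}.

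Vertex variables are not essential. Take the pure-edge kernel $f=\epsilon_{12}\epsilon_{34}+\epsilon_{13}\epsilon_{24}+\epsilon_{14}\epsilon_{23}$ on $K_4$, with $\epsilon_{ab}=\mathbf{1}\{Y_{ab}\le p\}-p$. Here $l=4$, the principal part lives on $2K_2$, and $n^{-2}S_{n,4}(f)\to\tfrac{p(1-p)}{4}(\chi^2_1-1)$. In general, a principal component supported on a disconnected $G$ contributes a Wick-type polynomial in the Gaussian limits attached to the components of $G$, not a Gaussian. So citing Janson--Nowicki ``verifying only symmetry, square integrability and the principal degree'' does not give the claimed limit.

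\textbf{How to repair it.} Add the hypothesis that $f^{(k)}_G=0$ for every disconnected $G\in\Gamma_l$ and every $k$. Then, for connected $G$, $n^{l/2-|V(F)|}S^{*}_{n,|V(F)|}(g_G)$ is asymptotically $\mathcal{N}(0,|\mathrm{Aut}(G)|\,\mathbb{E}g_G^2)$, jointly independent over non-isomorphic $G$. Applying this with $g_G=\sum_k\lambda_k f^{(k)}_G$ gives your expression for $\lambda^\top\Sigma\lambda$.

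In the only place the proposition is used, $l=2$ and the kernels depend on the $Y_{ij}$ alone. Hence $f^{(k)}_{E_{\{1,2\}}}=0$ and only $K_{\{1,2\}}$ survives, so there your Lindeberg remark is a complete proof.

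Your coupling remark is also correct. Using $\mathbf{1}\{Y_{ab}\le p_{(k)}\}$ for every $k$ represents the multiplex law only when the layer events are nested. This affects the explicit $\min$-covariance in Theorem~\ref{thm:main}, but not this abstract statement.
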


\begin{proof}[Proof of Proposition~\ref{prop:joint-clt}]
Let \(f^{(k)} \in L^2(K_{|V(F)|})\) be a symmetric kernel. We first define a fully symmetrized version of the U-statistic:
\begin{equation}
S_{n,|V(F)|}^{*}(f^{(k)}) := \sum_{1 \leq i_1 \ne i_2 \ne \ldots \ne i_{|V(F)|} \leq n} f^{(k)}\left(Y_{i_1i_2}, \ldots, Y_{i_{|V(F)|-1}i_{|V(F)|}}\right).
\label{eq:symmetric-U}
\end{equation}
Due to the symmetry of \(f^{(k)}\) and from (\ref{eq:u-decomp}), we can relate this to the standard U-statistic as follows:
\begin{equation}
S_{n,|V(F)|}(f^{(k)}) = \frac{1}{|V(F)|!} S_{n,|V(F)|}^{*}(f^{(k)}) = \sum_{G \subseteq K_{|V(F)|}} \frac{1}{|V(F)|!} S_{n,|V(F)|}^{*}(f_G^{(k)}).
\label{eq:decomp-symmetric}
\end{equation}

The symmetry of \(f^{(k)}\) implies that if \(G_1\) and \(G_2\) are isomorphic subgraphs of \(K_{|V(F)|}\), then their contributions to the symmetric U-statistic are equal:
\[
S^{*}_{n,|V(F)|}(f^{(k)}_{G_1}) = S^{*}_{n,|V(F)|}(f^{(k)}_{G_2}).
\] Moreover, each graph \(G \in \Gamma_t\) is isomorphic to exactly \(\frac{|V(F)|!}{(|V(F)| - t)! |\mathrm{Aut}(G)|}\) subgraphs of \(K_{|V(F)|}\). Using this, we deduce
\begin{equation}
S_{n,|V(F)|}(f^{(k)}) = \sum_{t=0}^{|V(F)|} \sum_{G \in \Gamma_t} \frac{1}{(|V(F)| - t)! |\mathrm{Aut}(G)|} S_{n,|V(F)|}^{*}(f_G^{(k)}).
\label{eq:sum-over-Gamma}
\end{equation}

Now, restricting to the term corresponding to the principal degree \(l\), we get
\begin{equation}
S_{n,|V(F)|}(f_{(l)}^{(k)}) = \sum_{G \in \Gamma_l} \frac{1}{(|V(F)| - l)! |\mathrm{Aut}(G)|} S_{n,|V(F)|}^{*}(f_G^{(k)}).
\label{eq:principal-component}
\end{equation}

Applying Lemma~\ref{lem:principal-term} together with Equation~\eqref{eq:principal-component}, we obtain the following approximation for \(S_k\) defined in Equation~\eqref{eq:Sk-def}:
\begin{equation}
S_k \approx n^{l/2 - |V(F)|} \sum_{G \in \Gamma_l} \frac{1}{(|V(F)| - l)! |\mathrm{Aut}(G)|} S_{n,|V(F)|}^{*}(f_G^{(k)}).
\label{eq:Sk-approximation}
\end{equation}

To study the asymptotic behavior of the vector \(\mathbf{S} = [S_k]_{k \in \Lambda_d}\), we apply the Cramér-Wold device. It suffices to analyze the limiting distribution of linear combinations of the form
\begin{equation}
\sum_{k \in \Lambda_d} \alpha_k S_k,
\label{eq:cw-linear}
\end{equation}
for arbitrary constants \(\{\alpha_k\}_{k \in \Lambda_d}\).

Using Equation~\eqref{eq:Sk-approximation}, we approximate this linear combination as follows:
\begin{align}
\sum_{k \in \Lambda_d} \alpha_k S_k 
&\approx \sum_{k \in \Lambda_d} \alpha_k n^{l/2 - |V(F)|} \sum_{G \in \Gamma_l} \frac{1}{(|V(F)| - l)! \cdot |\mathrm{Aut}(G)|} S^{*}_{n,|V(F)|}(f^{(k)}_G) \notag \\
&= n^{l/2 - |V(F)|} \sum_{G \in \Gamma_l} \frac{1}{(|V(F)| - l)! \cdot |\mathrm{Aut}(G)|} \sum_{k \in \Lambda_d} \alpha_k S^{*}_{n,|V(F)|}(f^{(k)}_G) \notag \\
&= n^{l/2 - |V(F)|} \sum_{G \in \Gamma_l} \frac{1}{(|V(F)| - l)! \cdot |\mathrm{Aut}(G)|} S^{*}_{n,|V(F)|} \left( \sum_{k \in \Lambda_d} \alpha_k f^{(k)}_G \right).
\label{eq:lin-comb-expansion}
\end{align}

To conclude the convergence of this expression, we use the following lemma (see \cite{JansonNowicki1991}):

\begin{lemma} [\cite{JansonNowicki1991}]\label{lem:asymptotic-normal}
Suppose each \(f^{(k)}_G\) is invariant under permutations of the vertex labels, i.e. $f^{(k)} = f^{(k)} \circ \pi$ for any permutation $\pi$ of $\{1,2,\ldots, |V(F)|\}$. Then the family
\[
n^{l/2 - |V(F)|} S^{*}_{n,|V(F)|}(f^{(k)}_G), \quad k \in \Lambda_d,
\]
converges jointly in distribution to a family \(\{\xi_k\}_{k \in \Lambda_d}\) of independent Gaussian random variables with
\[
\xi_k \sim \mathcal{N}\left(0, |\mathrm{Aut}(G)| \cdot \mathbb{E}[(f^{(k)}_G)^2] \right).
\]
\end{lemma}

By linearity and independence, and applying Lemma~\ref{lem:asymptotic-normal} to Equation~\eqref{eq:lin-comb-expansion}, we conclude that
\begin{equation}
\sum_{k \in \Lambda_d} \alpha_k S_k \xrightarrow{\mathcal{D}} \mathcal{N} \left(0, \sum_{i,j \in \Lambda_d} \alpha_i \alpha_j \sigma_{ij} \right),
\end{equation}
where the covariance matrix \((\sigma_{ij})\) is given by
\[
\sigma_{ij} = \frac{1}{[(|V(F)| - l)!]^2} \sum_{G \in \Gamma_l} \frac{1}{|\mathrm{Aut}(G)|} \mathbb{E}[f^{(i)}_G \cdot f^{(j)}_G].
\]

This completes the proof of Proposition~\ref{prop:joint-clt}.

\end{proof}

We now prove that the principal degree of \( f^{(k)} \) is equal to 2 in the Erdős–Rényi case. Note that the set \( \Gamma_2 \) of isomorphism types of graphs on 2 vertices consists of two elements:
\[
\Gamma_2 = \left\{ E_{\{1,2\}},\ K_{\{1,2\}} \right\},
\]
where \( E_{\{1,2\}} \) is the empty graph with two isolated vertices \( \{1\} \) and \( \{2\} \), and \( K_{\{1,2\}} \) is the complete graph on two vertices.

We first show that the first-degree component \( f^{(k)}_{(1)} \) vanishes for all \( k \in \Lambda_d \), while the second-degree component \( f^{(k)}_{(2)} \) does not. From the definition of $f^{(k)}_{(s)}$ in (\ref{eq:principal-degree}), we have that:
\begin{equation}
f^{(k)}_{(1)} = \sum_{a=1}^{|V(F)|} f^{(k)}_{K_{\{a\}}}, \quad \text{where } f^{(k)}_{K_{\{a\}}} = P_{M_{K_{\{a\}}}} f^{(k)}.
\end{equation}
Now, using equations~\eqref{eq:cond-expectation} and~\eqref{eq:proj-sum}, we have:
\begin{align}
f^{(k)}_{K_{\{a\}}} &= P_{M_{K_{\{a\}}}} f^{(k)} = P_{L^2(K_{\{a\}})} f^{(k)} - P_{M_{\emptyset}} f^{(k)} \\
&= \underbrace{\mathbb{E}[f^{(k)} \mid U_a]}_{=\mathbb{E}\left[f^{(k)}\right]} - \mathbb{E}[f^{(k)}] = 0,
\end{align}
since in an Erdős–Rényi (ER) model, the function \( f^{(k)} \) depends only on the \( Y_{ij} \), and not on the latent variables \( U_i \). Therefore,
\begin{equation}
f^{(k)}_{(1)} = 0, \quad \text{for all } k \in \Lambda_d.
\end{equation}

We now verify that the second-degree component \( f^{(k)}_{(2)} \) is non-zero. This component includes contributions from both \( f^{(k)}_{E_{\{a,b\}}} \) and \( f^{(k)}_{K_{\{a,b\}}} \) for \( 1 \leq a < b \leq |V(F)| \). For the empty graph \( E_{\{a,b\}} \), we have:
\begin{align}
f^{(k)}_{E_{\{a,b\}}} &= P_{L^2(E_{\{a,b\}})} f^{(k)} - P_{M_{K_{\{a\}}}} f^{(k)} - P_{M_{K_{\{b\}}}} f^{(k)} - P_{M_{\emptyset}} f^{(k)} \\
&= \mathbb{E}(f^{(k)} \mid U_a, U_b) - \mathbb{E}(f^{(k)} \mid U_a) - \mathbb{E}(f^{(k)} \mid U_b) + \mathbb{E}(f^{(k)})
\\&= \mathbb{E}(f^{(k)}) - \mathbb{E}(f^{(k)}) -\mathbb{E}(f^{(k)}) + \mathbb{E}(f^{(k)})\\
    &=0,
\end{align}
again due to the structure in the ER model. Hence,
\begin{equation}
\mathbb{E}\left[f^{(i)}_{E_{\{1,2\}}} \cdot f^{(j)}_{E_{\{1,2\}}}\right] = 0, \quad \text{for all } i,j \in \Lambda.
\end{equation}

Now consider the component \( f^{(k)}_{K_{\{a,b\}}} \), associated to the complete graph on two vertices. We compute:
\begin{align}
f^{(k)}_{K_{\{a,b\}}} &= P_{L^2(K_{\{a,b\}})} f^{(k)} - P_{M_{E_{\{a,b\}}}} f^{(k)} - P_{M_{K_{\{a\}}}} f^{(k)} - P_{M_{K_{\{b\}}}} f^{(k)} - P_{M_{\emptyset}} f^{(k)} \\
    &= P_{L^2({K_{\{a, b\}}})}f^{(k)} - P_{L^2(E_{\{a,b\}})}f^{(k)}\\
    &= \mathbb{E}(f^{(k)} \mid U_a, U_b, Y_{ab}) - \mathbb{E}(f^{(k)} \mid U_a, U_b)\\
    &= \mathbb{E}(f^{(k)} \mid Y_{ab}) - \mathbb{E}(f^{(k)})\\
    & \neq 0,
\end{align}
so we conclude that:
\begin{equation}
f^{(k)}_{(2)} \neq 0, \quad \text{for all } k \in \Lambda_d.
\end{equation}
Thus, the principal degree \( l \) of \( f^{(k)} \) is equal to 2 in the Erdős–Rényi case.

Next, we derive an explicit expression for \( f^{(k)}_{K_{\{1,2\}}} \). From the computation above, we have:
\begin{align}
f^{(k)}_{K_{\{1,2\}}} &= \mathbb{E}(f^{(k)} \mid Y_{12}) - \mathbb{E}(f^{(k)}) \\
&= \sum_{F' \in \mathcal{I}_F} p_{(k)}^{|E(F)| - 1} \mathbf{1}\{Y_{12} \leq p_{(k)}\} \mathbf{1}\{(1,2) \in E(F')\} - \sum_{F' \in \mathcal{I}_F} p_{(k)}^{|E(F)|} \\
&= \sum_{F' \in \mathcal{I}_{F,\{1,2\}}} p_{(k)}^{|E(F)| - 1} \left(\mathbf{1}\{Y_{12} \leq p_{(k)}\} - p_{(k)}\right),
\end{align}
where \( \mathcal{I}_{F,\{1,2\}} := \{F' \in \mathcal{I}_F : (1,2) \in E(F')\} \).

We then compute the covariance. For all $i,j \in \Lambda_d$ :
\begin{align}
\mathbb{E}\left[f^{(i)}_{K_{\{1,2\}}} \cdot f^{(j)}_{K_{\{1,2\}}}\right] 
&= \sum_{F_1', F_2' \in \mathcal{I}_{F,\{1,2\}}} (p_{(i)}p_{(j)})^{|E(F)| - 1} \cdot \mathbb{E}\left[ \left(\mathbf{1}\{Y_{12} \leq p_{(i)}\} - p_{(i)}\right)\left(\mathbf{1}\{Y_{12} \leq p_{(j)}\} - p_{(j)}\right) \right] \\
&= \sum_{F_1', F_2' \in \mathcal{I}_{F,\{1,2\}}} (p_{(i)}p_{(j)})^{|E(F)| - 1} \cdot (p_{(i)} \wedge p_{(j)} - p_{(i)}p_{(j)}).
\end{align}
Now observe that \( |\mathcal{I}_{F,\{1,2\}}| = \frac{(|V(F)| - 2)!}{|\mathrm{Aut}(F)|} \), so we can write:
\begin{equation}
\mathbb{E}\left[f^{(i)}_{K_{\{1,2\}}} \cdot f^{(j)}_{K_{\{1,2\}}}\right]
= \frac{(|V(F)| - 2)!^2}{|\mathrm{Aut}(F)|^2} \cdot (p_{(i)}p_{(j)})^{|E(F)| - 1} \cdot (p_{(i)} \wedge p_{(j)} - p_{(i)}p_{(j)}).
\end{equation}

Finally, applying Proposition~\ref{thm:main}, and using that \( |\mathrm{Aut}(K_{\{1,2\}})| = 2 \), we obtain:
\begin{equation}
\sigma_{ij} = \frac{1}{2|\mathrm{Aut}(F)|^2} \cdot (p_{(i)}p_{(j)})^{|E(F)| - 1} \cdot (p_{(i)} \wedge p_{(j)} - p_{(i)}p_{(j)}).
\end{equation}

Noting that the vector \( \mathbf{S} \) from Proposition~\ref{thm:main} coincides with \( \mathbf{Z}_F\) when the principal degree \( l = 2 \), we conclude that the asymptotic distribution of the MER moments is established.

\end{proof}

\section{Proof of Theorem \ref{thm:main2}}
\label{append:B}
The proof of Theorem~\ref{thm:main2} builds upon the works of 
\cite{BhattacharyaChatterjeeJanson2023}, \cite{Bhattacharya2024}, and \cite{JansonNowicki1991}. 
A substantial part of the argument can be adapted directly from \cite{Bhattacharya2024} 
by incorporating indices on the graphon, that is, by considering the indexed family 
\((W_{(k)})_{k \in \Lambda_d}\) instead of a single graphon \(W\). 
More precisely, for each \( k \in \Lambda_d \), we define
\begin{align}
X_F(G_n^{(k)}) - \mathbb{E}\!\left[X_F(G_n^{(k)})\right] 
&= \sum_{1 \leq i_1 < \cdots < i_{|V(F)|} \leq n} 
f^{(k)}\!\left(
\begin{array}{l}
U_{i_1}, \ldots, U_{i_{|V(F)|}}, \\[0.3em]
Y_{i_1i_2}, \ldots, Y_{i_{|V(F)|-1}i_{|V(F)|}}
\end{array}
\right)  \\
&=: S_{n,|V(F)|}\!\left(f^{(k)}\right), \label{eq:Ustat_form}
\end{align}
where the kernel \( f^{(k)} \) is given by
\begin{align}
f^{(k)}\big(&U_1, \ldots, U_{|V(F)|}, Y_{12}, \ldots, Y_{|V(F)|-1\, |V(F)|} \big) \nonumber \\
&= \sum_{F' \in \mathcal{I}_F} \prod_{(a,b) \in E(F')} 
\mathbf{1}\!\left\{Y_{ab} \leq W_{(k)}(U_a, U_b)\right\} 
- |\mathcal{I}_F| \cdot t(F, W_{(k)}). \label{eq:f_kernel}
\end{align}

Hence, the convergence towards the linear stochastic integral in the case 
\(k \in \Lambda_d'\) (i.e., when the graphons \(W_{(k)}\) are \(F\)-irregular), 
as well as the convergence towards the bivariate stochastic integral component 
for \(k \in \Lambda_d \setminus \Lambda_d'\), need not be repeated here, 
since they follow directly from \cite[Theorem~2.1]{Bhattacharya2024}. 
The only component requiring a separate treatment is the Gaussian term 
arising when \(k \in \Lambda_d \setminus \Lambda_d'\). 
Using the same notations as in the proof of Theorem~2.1 in \cite{Bhattacharya2024}, 
we directly state the following propositions.

\begin{proposition} \label{prop:2}
Let $\{\xi_s\}_s$ and $\{\tilde{\xi}_s\}_s$ be independent collections of $\mathcal{N}(0,1)$ and $\mathcal{N}(0,2)$ random variables, respectively. For all $k \in \Lambda_d$ and $Z_F(G_n^{(k)})$ as defined in \eqref{eq:ZFdefinition} , we have the convergence in distribution:
\begin{equation}
\mathbf{Z}_F := (Z_F(G_n^{(k)}))_{k \in \Lambda_d} \xrightarrow{\text{D}} \mathbf{Z}:= (Z_k)_{k \in \Lambda_d}. \label{eq:TF_dist_conv}
\end{equation}
The limiting random variables $Z_k$ are given by:
\begin{itemize}
    \item If $k \in \Lambda_d'$, i.e. the $W_{(k)}$ are $F$-irregular:
    \begin{equation}
    Z_k = \frac{1}{(|V(F)| - 1)!} \sum_{s \geq 1} \mathbb{E}\left[f^{(k)} \cdot {\phi_{K_{\{1\}}}}_s \right] \xi_s := Q_{(k)}. \label{eq:Q_k}
    \end{equation}

    \item If $k \in \Lambda_d \setminus \Lambda_d'$, i.e. the $W_{(k)}$ are $F$-regular:
    \begin{align}
    Z_k = \frac{1}{2(|V(F)| - 2)!} \bigg\{&
    \sum_{s \geq 1} \mathbb{E}\left[f^{(k)} \cdot ({\phi_{K_{\{1\}}}}_s \otimes {\phi_{K_{\{1\}}}}_s) \right] (\xi_s^2 - 1) \nonumber \\
    &+ 2 \sum_{s < t} \mathbb{E}\left[f^{(k)} \cdot ({\phi_{K_{\{1\}}}}_s \otimes {\phi_{K_{\{1\}}}}_t) \right] \xi_s \xi_t \nonumber \\
    &+ \sum_{s \geq 1} \mathbb{E}\left[f^{(k)} \cdot {\phi_{K_{\{1,2\}}}}_s \right] \tilde{\xi}_s \bigg\} := R_{(k)} + \tilde{R}_{(k)}, \label{eq:R_k}
    \end{align}
    where $R_{(k)}$ is the sum of the first two terms of $Z_k$ and $\tilde{R}_{(k)}$ is the last term of $Z_k.$
\end{itemize}
\end{proposition}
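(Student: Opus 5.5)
The plan is to follow the proof of Theorem~2.1 in \cite{Bhattacharya2024}, now applied to the family of kernels $(f^{(k)})_{k\in\Lambda_d}$ defined in \eqref{eq:f_kernel}. All of these kernels are built from the \emph{same} latent variables $(U_i)$ and the \emph{same} edge uniforms $(Y_{ij})$; only the thresholds $W_{(k)}$ change with $k$. This shared structure is what couples the coordinates.

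\textbf{Step 1: orthogonal expansion.} I would expand each $f^{(k)}$ in the orthogonal decomposition of generalized U-statistics recalled in Appendix~\ref{append:Ortho}. The subspace $M_{K_{\{1\}}}$ will be spanned by a fixed orthonormal basis $\{\phi_{K_{\{1\}},s}\}_s$, and $M_{K_{\{1,2\}}}$ by a fixed orthonormal basis $\{\phi_{K_{\{1,2\}},s}\}_s$. The essential point is that both bases are chosen independently of $k$.

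\textbf{Step 2: principal degree and the leading term.} For $k\in\Lambda_d'$ the principal degree is $1$. For $k\notin\Lambda_d'$, $F$-regularity gives $P_{M_{K_{\{a\}}}}f^{(k)}=0$, so the principal degree is $2$; in that case both the $E_{\{a,b\}}$ and the $K_{\{a,b\}}$ components contribute. By Lemma~\ref{lem:principal-term}, applied coordinatewise, each $Z_F(G_n^{(k)})$ with the normalization \eqref{eq:ZFdefinition} is asymptotically equivalent to the U-statistic of its principal component. By the Cramér--Wold device, it then suffices to study
\[
\sum_k\alpha_k\,n^{l_k/2-|V(F)|}S_{n,|V(F)|}\bigl(f^{(k)}_{(l_k)}\bigr).
\]

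\textbf{Step 3: truncate and pass to the limit.} I would truncate each basis expansion at finitely many terms $s\le M$; the $L^2$ remainder can be made uniformly small in $n$ by the usual variance bound. In the truncated sum every coordinate is a fixed polynomial in two families of normalized sums:
\[
n^{-1/2}\sum_i\phi_{K_{\{1\}},s}(U_i), \qquad n^{-1}\sum_{i<j}\phi_{K_{\{1,2\}},s}(U_i,U_j,Y_{ij}).
\]
The degree-one parts enter linearly, and the $E_{\{1,2\}}$ parts enter through $\sum_{i\ne j}\phi_s(U_i)\phi_t(U_j)$, which equals $(\sum_i\phi_s(U_i))(\sum_j\phi_t(U_j))$ minus a diagonal term, producing the $\xi_s^2-1$ and $\xi_s\xi_t$ terms. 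Lemma~\ref{lem:asymptotic-normal}, or the Janson--Nowicki joint CLT for these two families, shows that they converge jointly to independent $\mathcal N(0,1)$ variables $\xi_s$ and independent $\mathcal N(0,2)$ variables $\tilde\xi_s$. Crucially, this is one common family for all $k$. The continuous mapping theorem then yields joint convergence of the truncated vector to the truncated $Q_{(k)}$ and $R_{(k)}+\tilde R_{(k)}$. Letting $M\to\infty$ with a standard approximation argument (Billingsley, Theorem~3.2) gives the stated limits. The constants $1/(|V(F)|-1)!$ and $1/(2(|V(F)|-2)!)$ come from counting the copies of $K_{\{a\}}$ and $K_{\{a,b\}}$ in $K_{|V(F)|}$, exactly as in \eqref{eq:principal-component}.

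\textbf{Main obstacle.} The hard part will be the joint, rather than marginal, statement. The coordinates use different normalizations, $n^{|V(F)|-1/2}$ versus $n^{|V(F)|-1}$, and one must check two things. First, the leading terms of irregular and regular coordinates are built from the same $\xi_s$, so the limits are coupled through a single Brownian motion $B$ via $\xi_s=\int\phi_s\,dB$. Second, the $\tilde\xi_s$, which come from the edge uniforms $Y$, are independent of the $\xi_s$. I would settle this through the orthogonality of $M_{K_{\{1\}}}$ and $M_{K_{\{1,2\}}}$: the relevant mixed covariances vanish, so joint Gaussianity of the finite-dimensional family gives independence. The remaining step is to verify that $\sum_s\mathbb E[f^{(i)}\phi_{K_{\{1,2\}},s}]\,\mathbb E[f^{(j)}\phi_{K_{\{1,2\}},s}]\cdot 2/(4(|V(F)|-2)!^2)$ reproduces $\Sigma$ in \eqref{equ:5}. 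By Parseval this reduces to $\mathbb E[f^{(i)}_{K_{\{1,2\}}}f^{(j)}_{K_{\{1,2\}}}]$, and a computation analogous to the Erdős--Rényi case gives $C_{ij}\,\phi(W_{(i)},W_{(j)})$, since $\mathbf 1\{Y\le a\}$ and $\mathbf 1\{Y\le b\}$ have covariance $\min(a,b)-ab$.
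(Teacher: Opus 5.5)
Your architecture is the one the paper intends: its proof of this proposition is only a pointer to Proposition~A.2 of \cite{Bhattacharya2024}. Steps 1--3 are sound as mechanics: common orthonormal bases of $M_{K_{\{1\}}}$ and $M_{K_{\{1,2\}}}$, reduction to principal components, truncation, and a joint CLT giving one family $\xi_s$ shared by regular and irregular coordinates plus an independent family $\tilde\xi_s$.

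The genuine gap is the sentence you single out as the source of the coupling: all kernels use the same $Y_{ij}$ and ``only the thresholds $W_{(k)}$ change with $k$''. Setting $A^{(k)}_{ij}=\mathbf{1}\{Y_{ij}\le W_{(k)}(U_i,U_j)\}$ simultaneously for all $k$ is exactly \eqref{eq:f_kernel}, so the paper's deferral has the same problem. This choice reproduces the law of each $G_n^{(k)}$ separately. It does not reproduce the joint law of $(G_n^{(k)})_{k\in\Lambda_d}$, which is the whole content of the proposition.

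Under this coupling, $\mathbb{E}[A^{(i)}_{12}A^{(j)}_{12}\mid U_1,U_2]=\min(W_{(i)},W_{(j)})$. In the multiplex model, $A^{(i)}\circ A^{(j)}$ is the adjacency matrix of the intersection over $S(i)\cup S(j)$, so this conditional expectation is $W_{(S(i)\cup S(j))}$, the graphon of that layer set. The two agree when $S(i)$ and $S(j)$ are nested, but not otherwise. The coupling does not even respect $A^{(12)}=A^{(1)}\circ A^{(2)}$ unless $W_{(12)}=\min(W_{(1)},W_{(2)})$.

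For instance, take two conditionally independent layers with $W_{(1)}=W_{(2)}$. Your coupling produces $G_n^{(1)}=G_n^{(2)}$, hence $Z_1=Z_2$. In the multiplex Erd\H{o}s--R\'enyi case, however, the two counts are exactly independent. So the identification $\bigl(X_F(G_n^{(k)})-\mathbb{E}X_F(G_n^{(k)})\bigr)_k\overset{d}{=}\bigl(S_{n,|V(F)|}(f^{(k)})\bigr)_k$ fails for non-nested pairs, and your argument rests on it. As a result the cross-covariances of the $\tilde R_{(k)}$ come out wrong. The $Q_{(k)}$ and $R_{(k)}$ parts are unaffected, since they only depend on $\mathbb{E}[f^{(k)}\mid U]$.

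The repair leaves your argument intact: realise the whole edge vector with one uniform.
\begin{itemize}
\item Take $h:[0,1]^3\to\{0,1\}^d$, symmetric in its first two arguments, such that $h(x,y,Y)$ with $Y\sim\mathbf{U}[0,1]$ has law $\mathrm{MultBernoulli}_d(\underline{W}(x,y))$. For example, split $[0,1]$ into $2^d$ intervals whose lengths are the atom probabilities from Theorem~\ref{thm:teugels}.
\item Put $\underline{A}_{ij}=h(U_i,U_j,Y_{ij})$ and $A^{(k)}_{ij}=\prod_{l\in S(k)}h_l(U_i,U_j,Y_{ij})$.
\item Define $f^{(k)}$ with these indicators.
\end{itemize}
All $f^{(k)}$ are again functions of the common $(U_i,Y_{ij})$, so Steps 1--3 go through verbatim and the proposition holds for this $f^{(k)}$.

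Your closing covariance check does change. Now $\mathbb{E}[(A^{(i)}_{12}-W_{(i)})(A^{(j)}_{12}-W_{(j)})\mid U_1,U_2]=W_{(S(i)\cup S(j))}-W_{(i)}W_{(j)}$. So $\phi(W_{(i)},W_{(j)})$ in \eqref{eq:Omega} must be replaced by this quantity, and $\Sigma$ as written in \eqref{equ:5} is recovered only for nested pairs.

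A minor bookkeeping slip: $n^{-1}\sum_{i<j}\phi_{K_{\{1,2\}},s}(U_i,U_j,Y_{ij})$ tends to $\mathcal{N}(0,1/2)$, i.e.\ to $\tilde\xi_s/2$. The $\mathcal{N}(0,2)$ normalisation corresponds to summing over ordered pairs, which is what the prefactor $1/(2(|V(F)|-2)!)$ absorbs.
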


The proof of proposition \ref{prop:2} follows directly from the proof of Theorem~2.1 in \cite{Bhattacharya2024} (Proposition~A.2).

\begin{proposition} \label{prop:3}
Let $k \in \Lambda_d\setminus \Lambda_d'$. Let $\tilde{R}_{(k)}$ be defined as in Equation~\eqref{eq:R_k}. Then the vector
\[
\tilde{\mathbf{R}} := (\tilde{R}_{(k)})_{k \in \Lambda_d\setminus \Lambda_d'}
\]
converges in distribution to a multivariate normal distribution:
\[
\tilde{\mathbf{R}} \sim \mathcal{N}(\mathbf{0}, \Sigma),
\]
where $\Sigma$ is defined as in ~\eqref{equ:5}.
\end{proposition}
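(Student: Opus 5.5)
We need to show that the vector of terms $\tilde R_{(k)} = \frac{1}{2(|V(F)|-2)!}\sum_{s\ge1}\mathbb{E}[f^{(k)}\phi_{K_{\{1,2\}},s}]\tilde\xi_s$, $k\in\Lambda_d\setminus\Lambda_d'$, is jointly Gaussian with covariance $\Sigma$ from \eqref{equ:5}. The key point is that the Gaussians $\tilde\xi_s$ are shared across all $k$. Hence for any fixed $(\alpha_k)$, the combination $\sum_k\alpha_k\tilde R_{(k)}$ is a single $L^2$-convergent series $\sum_s c_s\tilde\xi_s$ of independent centered Gaussians, with
\[
c_s=\frac{1}{2(|V(F)|-2)!}\,\mathbb{E}\Bigl[\Bigl(\sum_k\alpha_k f^{(k)}\Bigr)\phi_{K_{\{1,2\}},s}\Bigr].
\]
Such a series is Gaussian. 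By Cramér--Wold, $\tilde{\mathbf R}$ is therefore centered Gaussian, and it remains only to identify its covariance. Convergence of the series follows from Bessel's inequality, since $f^{(k)}\in L^2$.

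For the covariance, independence of the $\tilde\xi_s$ and $\mathrm{Var}(\tilde\xi_s)=2$ give
\[
\mathrm{Cov}(\tilde R_{(i)},\tilde R_{(j)})=\frac{2}{4(|V(F)|-2)!^2}\sum_s\mathbb{E}[f^{(i)}\phi_s]\,\mathbb{E}[f^{(j)}\phi_s].
\]
Here $\{\phi_{K_{\{1,2\}},s}\}_s$ is an orthonormal basis of the subspace $M_{K_{\{1,2\}}}$, as in the notation of \cite{Bhattacharya2024}. By Parseval in that subspace, the sum equals $\mathbb{E}[f^{(i)}_{K_{\{1,2\}}}f^{(j)}_{K_{\{1,2\}}}]$, where $f^{(k)}_{K_{\{1,2\}}}=P_{M_{K_{\{1,2\}}}}f^{(k)}$. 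Since the orthogonal decomposition of the appendix gives $P_{M_{K_{\{1,2\}}}}=P_{L^2(K_{\{1,2\}})}-P_{L^2(E_{\{1,2\}})}$, we get
\[
f^{(k)}_{K_{\{1,2\}}}=\mathbb{E}[f^{(k)}\mid U_1,U_2,Y_{12}]-\mathbb{E}[f^{(k)}\mid U_1,U_2].
\]

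We compute this projection from \eqref{eq:f_kernel}. Copies $F'\in\mathcal I_F$ not containing the edge $(1,2)$ do not depend on $Y_{12}$, so they cancel. For $F'\in\mathcal I_{F,\{1,2\}}$, conditioning on $(U_1,U_2,Y_{12})$ integrates out the other latent variables and edge uniforms. Because the remaining edges use distinct independent $Y$'s, this yields $\mathbf 1\{Y_{12}\le W_{(k)}(U_1,U_2)\}\,t^-_{1,2}(U_1,U_2,F',W_{(k)})$. Subtracting the conditional mean gives
\[
f^{(k)}_{K_{\{1,2\}}}=\sum_{F'\in\mathcal I_{F,\{1,2\}}}t^-_{1,2}(U_1,U_2,F',W_{(k)})\bigl(\mathbf 1\{Y_{12}\le W_{(k)}(U_1,U_2)\}-W_{(k)}(U_1,U_2)\bigr).
\]

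Multiplying the expressions for $i$ and $j$, we condition on $(U_1,U_2)$ and use the single uniform $Y_{12}$ that the layers share through the coupling. This gives
\[
\mathbb{E}\bigl[(\mathbf 1\{Y\le a\}-a)(\mathbf 1\{Y\le b\}-b)\bigr]=\min(a,b)-ab=\phi(a,b).
\]
Summing over pairs $F'_1,F'_2$ then produces exactly $\Omega_{ij}$ in \eqref{eq:Omega}. The total constant is $\frac{1}{2(|V(F)|-2)!^2}$, which matches \eqref{equ:5}.

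The main obstacle is justifying the coupling. In the U-statistic representation, the same $Y_{ab}$ and $U_a$ must be used for all $k$, with $\mathbf 1\{Y_{ab}\le W_{(k)}\}$ realizing the intersection-graph edges. This is only consistent if the cross-layer graphons are nested, $W_{(k)}\le W_{(k')}$ whenever $S(k)\supseteq S(k')$, and if the joint law of the intersection edges is the comonotone one. I would first verify that nestedness holds automatically, since intersection edges are subsets of one another. I would then argue that the joint law of the indicators $(A^{(k)}_{ij})_k$ over the nested family is indeed given by thresholding one uniform. Failing that, I would state this coupling as the representation underlying \eqref{eq:Ustat_form}, as in the proof of Theorem \ref{thm:main}, where the same device produced $p_{(i)}\wedge p_{(j)}$.
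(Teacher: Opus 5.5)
Your argument is essentially the paper's proof: Gaussianity because every $\tilde R_{(k)}$ is linear in the same i.i.d.\ $\mathcal{N}(0,2)$ variables $\tilde\xi_s$, then Parseval in $M_{K_{\{1,2\}}}$, the explicit projection $f^{(k)}_{K_{\{1,2\}}}$ computed from \eqref{eq:f_kernel}, and $\phi(a,b)=\min(a,b)-ab$ from the shared $Y_{12}$, which gives $\sigma_{ij}=\frac{1}{2(|V(F)|-2)!^2}\Omega_{ij}$. The coupling in your last paragraph needs no justification here, because $\tilde R_{(k)}$ is defined through \eqref{eq:f_kernel}, which already uses one $Y_{ab}$ for all $k$ (as the paper does). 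Your first plan, proving that this is the true joint law, would fail: thresholding a single uniform makes the indicators totally ordered in $k$ and so forces $W_{(12)}=\min(W_{(1)},W_{(2)})$, whereas in general $\mathrm{Cov}(A^{(i)}_{12},A^{(j)}_{12}\mid U_1,U_2)=W_{(m)}-W_{(i)}W_{(j)}$ with $S(m)=S(i)\cup S(j)$. That expression agrees with $\phi(W_{(i)},W_{(j)})$ for nested $S(i),S(j)$ but not in general (e.g.\ $i=1$, $j=2$), so this is a caveat for the representation \eqref{eq:f_kernel} and hence for Theorem~\ref{thm:main2}, not for this proposition.
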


\begin{proof}[Proof of proposition \ref{prop:3}]

The variables $\{\tilde{\xi}_s\}_s$ are i.i.d. $\mathcal{N}(0,2)$. Hence, for each $k \in \Lambda_d \setminus \Lambda_d'$, the linear combination
\[
\tilde{R}_k = \frac{1}{2(|V(F)| - 2)!} \sum_{s \geq 1} \mathbb{E}[f^{(k)} \cdot {\phi_{K_{\{1,2\}}}}_s] \tilde{\xi}_s
\]
is Gaussian, and any linear combination of the $\tilde{R}_k$ is also Gaussian. Therefore, $\tilde{\mathbf{R}}$ is a Gaussian vector.

We compute the covariance matrix $\Gamma = (\gamma_{ij})$ of $\tilde{\mathbf{R}}$. We have:
\begin{align}
\gamma_{ij} =
\begin{cases}
\displaystyle \frac{1}{2(|V(F)| - 2)!^2} \sum_{s \geq 1} \mathbb{E}^2\left[f^{(i)} \cdot {\phi_{K_{\{1,2\}}}}_s\right] & \text{if } i = j, \\
\displaystyle \frac{1}{2(|V(F)| - 2)!^2} \sum_{s \geq 1} \mathbb{E}\left[f^{(i)} \cdot {\phi_{K_{\{1,2\}}}}_s\right] \mathbb{E}\left[f^{(j)} \cdot {\phi_{K_{\{1,2\}}}}_s \right] & \text{if } i \neq j.
\end{cases} \label{eq:gamma_ij}
\end{align}

First, recall that the scalar product between \( f^{(i)}_{K_{\{1,2\}}} \) and \( f^{(j)}_{K_{\{1,2\}}} \) can be expressed as
\begin{equation}
\langle f^{(i)}_{K_{\{1,2\}}}, f^{(j)}_{K_{\{1,2\}}} \rangle = \sum_{s \geq 1} \mathbb{E}\left[f^{(i)} \phi^{(s)}_{K_{\{1,2\}}} \right] \mathbb{E}\left[f^{(j)} \phi^{(s)}_{K_{\{1,2\}}} \right], \label{eq:proj_scalar}
\end{equation}
and the \( L^2 \)-norm satisfies
\begin{equation}
\| f^{(i)}_{K_{\{1,2\}}} \|_2^2 = \sum_{s \geq 1} \mathbb{E}^2\left[f^{(i)} \phi^{(s)}_{K_{\{1,2\}}} \right]. \label{eq:proj_norm}
\end{equation}

From \eqref{eq:proj_scalar} and \eqref{eq:proj_norm}, we deduce that the covariance coefficient \(\gamma_{ij}\) can be written as
\begin{equation}
\gamma_{ij} = \begin{cases}
        \frac{1}{2(|V(F)|-2)!^2} \mathbb{E}\left[(f^{(i)}_{K_{\{1,2\}}})^2\right] & \text{if } i=j,\\[5pt]
        \frac{1}{2(|V(F)|-2)!^2} \mathbb{E}\left[f^{(i)}_{K_{\{1,2\}}} f^{(j)}_{K_{\{1,2\}}}\right] & \text{if } i \neq j.
    \end{cases} \label{eq:gamma_def}
\end{equation}

Recall also that for each \( i \in \Lambda_d \setminus \Lambda_d' \), the projection is given by
\begin{equation}
f^{(i)}_{K_{\{1,2\}}} = \mathbb{E}\left[f^{(i)} \mid U_1, U_2, Y_{12}\right] - \mathbb{E}\left[f^{(i)} \mid U_1, U_2\right]. \label{eq:proj_def}
\end{equation}
By definition of \( f^{(i)} \) in \eqref{eq:f_kernel}, we obtain that
\begin{equation}
f^{(i)}_{K_{\{1,2\}}} = \sum_{F' \in \mathcal{I}_{F, \{1,2\}}} t^{-}_{1,2}(U_1,U_2,F',W_{(i)}) \cdot \left[\mathbf{1}\{Y_{12} \leq W_{(i)}(U_1,U_2)\} - W_{(i)}(U_1,U_2)\right], \label{eq:proj_rep}
\end{equation}
where
\begin{equation}
t^{-}_{1,2}(U_1,U_2,F',W_{(i)}) := \mathbb{E} \left[ \prod_{(a,b) \in E(F')\setminus \{(1,2)\}} W_{(i)}(U_a,U_b) \, \Big| \, U_1, U_2\right]. \label{eq:t_minus}
\end{equation}

Let us now compute the expectation
\begin{align}
&\mathbb{E}_{Y_{12}} \Big[\left(\mathbf{1}_{\{Y_{12} \leq W_{(i)}(U_1,U_2)\}} - W_{(i)}(U_1,U_2)\right)
\left(\mathbf{1}_{\{Y_{12} \leq W_{(j)}(U_1,U_2)\}} - W_{(j)}(U_1,U_2)\right) \Big] \nonumber \\
&= \min\left(W_{(i)}(U_1,U_2), W_{(j)}(U_1,U_2)\right) - W_{(i)}(U_1,U_2) W_{(j)}(U_1,U_2). \label{eq:Y12_cov}
\end{align}

Plugging equations \eqref{eq:proj_rep} and \eqref{eq:Y12_cov} into the computation of the covariance, we obtain
\begin{align}
\mathbb{E}\left[f^{(i)}_{K_{\{1,2\}}} f^{(j)}_{K_{\{1,2\}}}\right] &= \sum_{\substack{F'_1 \in \mathcal{I}_{F, \{1,2\}}\\ F'_2 \in \mathcal{I}_{F, \{1,2\}}}}  
\mathbb{E} \bigg[ t^{-}_{1,2}(U_1,U_2,F'_1,W_{(i)}) \cdot t^{-}_{1,2}(U_1,U_2,F'_2,W_{(j)}) \nonumber \\ 
&\quad \times \big(\mathbf{1}\{Y_{12} \leq W_{(i)}(U_1,U_2)\} - W_{(i)}(U_1,U_2) \big) \nonumber \\
&\quad \times \big(\mathbf{1}\{Y_{12} \leq W_{(j)}(U_1,U_2)\} - W_{(j)}(U_1,U_2) \big)  
\bigg] \nonumber\\
&=\sum_{\substack{F'_1, F'_2 \in \mathcal{I}_{F, \{1,2\}}}} \mathbb{E} \Big[ t^{-}_{1,2}(U_1,U_2,F'_1,W_{(i)}) \cdot t^{-}_{1,2}(U_1,U_2,F'_2,W_{(j)}) \nonumber \\
&\qquad \times \left(\min(W_{(i)}(U_1,U_2), W_{(j)}(U_1,U_2)) - W_{(i)}(U_1,U_2) W_{(j)}(U_1,U_2)\right) \Big]. \label{eq:cov_proj}
\end{align}

Finally, observe that
\begin{equation}
\mathbb{E}\left[t^{-}_{1,2}(U_1,U_2,F',W_{(i)}) \cdot W_{(i)}(U_1,U_2) \right] = \mathbb{E}\left[t_{1,2}(U_1,U_2,F',W_{(i)})\right], \label{eq:tminus_t}
\end{equation}
which follows from the tower property of conditional expectation. Hence, combining equations \eqref{eq:cov_proj} and \eqref{eq:tminus_t} with the scaling in \eqref{eq:gamma_def}, we conclude that
\begin{equation}
\frac{1}{2(|V(F)| - 2)!^2} \mathbb{E}\left[f^{(i)}_{K_{\{1,2\}}} f^{(j)}_{K_{\{1,2\}}}\right] = \Omega_{ij}, \label{eq:Omega_def}
\end{equation}
where \(\Omega_{ij}\) is the entry of the covariance matrix \(\Sigma\) defined in \eqref{eq:Omega}. This completes the proof of Proposition \ref{prop:3}.

\end{proof}

\section{Sampling Statistic of the Asymptotic Distribution}
\label{append:E}
In this section, we give the details of the construction of the sampling statistic, which allows us to estimate the limiting joint distribution of the multiplex moments established in Theorem \ref{thm:main2}. The approach extends that of \cite{Bhattacharya2024}.
We first define the empirical counterpart of the 1-point  conditional density. 

\begin{definition}[Empirical 1-Point Conditional Density]
\label{def:1pt}
The empirical version of \eqref{eq:1point_cond} is
\begin{align}
\tilde{t}_a(v, F, G_n^{(k)}) &:=
\frac{
\left|
\left\{ \varphi : V(F) \to V(G_n^{(k)}) \
\middle| \
\varphi(a) = v, \ \varphi \ \text{injective hom.}
\right\}
\right|
}{n^{|V(F)| - 1}},
\label{eq:empirical_1point} \\
& =\frac{1}{n^{|V(F)|-1}}
\sum_{\mathbf{s}_{\{a\}^c}} 
\left( \prod_{j \in N_F(a)} A^{(k)}_{v, s_j} \right)
\left( \prod_{(i,j) \in E(F \setminus \{a\})} A^{(k)}_{s_i, s_j} \right),
\label{eq:empirical_1point_sum}
\end{align}
where $\mathbf{s}_{\{a\}^c}$ is the set of all $|V(F)|-1$-tuples ${(s_i)}_{i \in V(F) \setminus \{a\}} \in ([n] \setminus v)^{|V(F)|-1}$  with distinct indices and $N_F(a)$ is the set of neighbors of $a$ in $F$.
In other words, $\tilde{t}_a(v, F, G_n^{(k)})$ is the fraction of injective homomorphisms from $F$ to $G_n^{(k)}$ mapping $a$ to $v$.

Furthermore, the following average
\begin{equation}
\widehat{t}(v,F,G_n^{(k)}) :=
\frac{1}{|\mathrm{Aut}(F)| }
\sum_{a=1}^{|V(F)|} \tilde{t}_a(v, F, G_n^{(k)}),
\label{eq:avg_t}
\end{equation}
is an empirical estimator of
\(
\frac{1}{|\mathrm{Aut}(F)|} \displaystyle \sum_{a=1}^{|V(F)|} t_a(v,F,W_{(k)}).
\)

Finally, recalling that $
t(F,W_{(k)}) = \mathbb{E}\left(t_a(v,F,W_{(k)}) \right),
$, 
we observe that 
\begin{equation} \label{eq:empirical_bar_t}
\bar{t}(F,G_n^{(k)}) := \frac{1}{n} \sum_{v=1}^{n} \widehat{t}(v,F,G_n^{(k)}),
\end{equation}
is an empirical estimator of
$
\frac{|V(F)|}{|\mathrm{Aut}(F)|} \, t(F,W_{(k)}).
$
\end{definition}

We now provide empirical counterparts for the $2-$point conditional motif kernel ${W_{(k)}}_F(x,y)$.

\begin{definition}[Empirical 2-Point Conditional Motif Kernel]
\label{def:2pt}
For $u,v \in V(G_n^{(k)})$, we define the empirical analogue of \eqref{eq:2point_kernel} by

\begin{equation}
\widehat{W}_{F}((u,v), G_n^{(k)}) :=
\frac{1}{2|\mathrm{Aut}(F)|}
\sum_{1 \leq a \neq b \leq |V(F)|}
\tilde{t}_{a,b}((u,v), F, G_n^{(k)}),
\label{eq:W_hat}
\end{equation}
where 
\begin{align}
\tilde{t}_{a,b}((u,v), F, G_n^{(k)}) &:=
\frac{
\left| \left\{ \varphi : V(F) \to V(G_n^{(k)}) \
\middle| \
\varphi(a) = u, \ \varphi(b) = v, \ \varphi \ \text{injective hom.}
\right\} \right|
}{n^{|V(F)| - 2}} 
\label{eq:empirical_2point}\\
&=
\frac{1}{n^{|V(F)|-2}}
A^{(k)+}_{u,v}
\sum_{\mathbf{s}_{\{a,b\}^c}} 
\left( \prod_{j \in N_F(a) \setminus \{b\}} A^{(k)}_{u, s_j} \right)
\left( \prod_{j \in N_F(b) \setminus \{a\}} A^{(k)}_{v, s_j} \right)
\notag \\
&\quad \times
\left( \prod_{(i,j) \in E(F \setminus \{a,b\})} A^{(k)}_{s_i, s_j} \right),
\label{eq:empirical_2point_sum}
\end{align}
and  
\begin{equation}
A^{(k)+}_{u,v} :=
\begin{cases}
A^{(k)}_{u,v}, & \text{if } (u,v) \in E(F), \\
1, & \text{otherwise}.
\end{cases}
\label{eq:A_plus}
\end{equation}
\end{definition}

\begin{definition}[2-Point empirical homomorphism density]
With $\widehat{W}_{F}((u,v), G_n^{(k)})$ as in \eqref{eq:W_hat}, we define 
\begin{equation}
\overline{W}_F(G_n^{(k)}) :=
\frac{1}{n^2} \sum_{u=1}^n \sum_{v=1}^n
\widehat{W}_{F}((u,v), G_n^{(k)})
\label{eq:W_bar}
\end{equation}
which is an empirical estimator of 
\(
\frac{|V(F)|(|V(F)|-1)}{2|\mathrm{Aut}(F)|} t(F,W_{(k)}).
\)
\end{definition}

\medskip
We are now ready to define the sampling statistic along with the convergence result. 

\begin{proposition}[\textbf{Convergence of the Sampling Statistic}]
\label{thm:multiplier-bootstrap}
Let $Z_1, \ldots, Z_n$ be i.i.d. standard normal random variables, i.e., $Z_i \sim \mathcal{N}(0,1)$ for all $1 \leq i \leq n$. Let $\Lambda'_d \subseteq \Lambda_d$ denote the set of indices $k \in \Lambda_d$ for which we have $F$-irregularity. 

Define the sampling statistics $\widehat{Z}_F(k)$ as follows:
\begin{align}
\widehat{Z}_F(k) := 
\begin{cases}
\dfrac{1}{\sqrt{n}} \sum\limits_{v=1}^n \left(\widehat{t}(v,F,G_n^{(k)}) - \bar{t}(F,G_n^{(k)})\right) Z_v & \text{if } k \in \Lambda'_d, \\[1.5ex]
\dfrac{1}{n} \sum\limits_{u,v=1}^n \left(\widehat{W}_F((u,v), G_n^{(k)}) - \overline{W}_F(G_n^{(k)})\right)(Z_u Z_v - \delta_{u,v}) & \text{if } k \in \Lambda_d \setminus \Lambda'_d,
\end{cases}
\label{eq:bootstrap-stat}
\end{align}

Suppose $G_n^{(k)}$ is a realization from the random graph model $G(n, W_{(k)})$ for each $k \in \Lambda_d$ and that the sequence $(G_n^{(k)})_{k \in \Lambda_d}$ converges in the joint-cut metric. Then, conditionally on the observed graphs $\mathbf{G}_n := \{G_n^{(k)}\}_{k \in \Lambda_d}$, we have the following convergence in distribution:
\begin{align}
\widehat{\mathbf{Z}}_F := \left( \widehat{Z}_F(k) \right)_{k \in \Lambda_d} \,\Big|\, \mathbf{G}_n \xrightarrow{D} \mathbf{Z} := \left( Z_k \right)_{k \in \Lambda_d},
\label{eq:bootstrap-convergence}
\end{align}
where $Z_k$ denotes the asymptotic distribution of the cross-layer motif counts as given in Theorem~\ref{thm:main2}.
\end{proposition}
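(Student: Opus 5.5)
The statement to prove is Proposition~\ref{thm:multiplier-bootstrap}. My plan is to reduce it, conditionally on $\mathbf{G}_n$, to the single-layer multiplier bootstrap of \cite{Bhattacharya2024}, applied layer-index by layer-index to the intersection graphs $G_n^{(k)}$. The reduction is then upgraded to joint convergence via Cramér--Wold, using the fact that all coordinates are driven by the \emph{same} multipliers $Z_1,\dots,Z_n$.

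\textbf{Marginal convergence.} For each $k\in\Lambda_d$, the intersection graph $G_n^{(k)}$ is itself an exchangeable graph $G(n,W_{(k)})$: its adjacency matrix is the Hadamard product over $S(k)$, and by Theorem~\ref{thm:mutlialdous} its edges are conditionally independent Bernoulli$(W_{(k)}(\xi_i,\xi_j))$. Joint cut convergence of the multiplex gives cut convergence of each $G_n^{(k)}$ to $W_{(k)}$. The single-layer result in \cite{Bhattacharya2024} then applies verbatim. It gives:
\begin{itemize}
\item the $L^2$ (in $v$) consistency of $\widehat t(v,F,G_n^{(k)})-\bar t(F,G_n^{(k)})$ towards $\frac{1}{|\mathrm{Aut}(F)|}\sum_a t_a(\cdot,F,W_{(k)})-\frac{|V(F)|}{|\mathrm{Aut}(F)|}t(F,W_{(k)})$;
\item the analogous cut/$L^2$ consistency of the centered kernel $\widehat W_F-\overline W_F$ towards the centered $W^{(k)}_F$.
\end{itemize}
In both cases I would work modulo a measure-preserving relabelling, using the empirical graphon representation.

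\textbf{Joint convergence.} Fix coefficients $(\alpha_k)$ and write
\[
\sum_k\alpha_k\widehat Z_F(k)=L_n+Q_n .
\]
Here $L_n=\frac{1}{\sqrt n}\sum_v g_n(v)Z_v$ is a linear form in Gaussians, with $g_n=\sum_{k\in\Lambda'_d}\alpha_k(\widehat t-\bar t)$. The term $Q_n=\frac1n\sum_{u,v}h_n(u,v)(Z_uZ_v-\delta_{uv})$ is a degree-two Wiener chaos, with $h_n$ the corresponding combination over $k\notin\Lambda_d'$.

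Conditionally on $\mathbf{G}_n$, $L_n$ is exactly Gaussian with variance $\frac1n\sum_v g_n(v)^2$. This converges to $\int_0^1 g(x)^2\,dx$, which is the variance of the limit $\sum_{k\in\Lambda'_d}\alpha_kZ_k$ from \eqref{equ:6}.

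For $Q_n$, I would diagonalize the symmetric matrix $h_n/n$ with eigenvalues $\lambda_j^{(n)}$. This gives $Q_n\overset{d}{=}\sum_j\lambda_j^{(n)}(\eta_j^2-1)$. The task is then to show that the spectra converge to those of the integral operator of the centered $W_F$-combination. Since $L_n$ and $Q_n$ live in orthogonal chaoses, their joint limit can be obtained by the fourth-moment/contraction criteria for vectors of multiple Wiener integrals (Peccati--Tudor type, \cite{Nualart2006}). What these criteria require is convergence of the kernels $g_n$ and $h_n$ in the appropriate norms.

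\textbf{Main obstacle: the Gaussian component $N_k$ of \eqref{equ:7}.} A pure second-chaos statistic in the $Z$'s only reproduces the bivariate integral. I expect the extra Gaussian part $N_k$, with covariance $\Sigma$ from \eqref{equ:5}, to arise from the diagonal and edge-level fluctuations of $\widehat W_F$ through the factor $A^{(k)+}_{u,v}$. Concretely, $\widehat W_F(u,v)$ differs from its smoothed version by a term proportional to $A_{uv}-W(\xi_u,\xi_v)$, whose contribution $\frac1n\sum_{u\neq v}(\cdot)Z_uZ_v$ behaves like an independent Gaussian. Its conditional variance is $\frac1{n^2}\sum_{u,v}(\cdot)^2$, which should converge to the $\phi(\cdot,\cdot)$-weighted expression $\Omega_{ij}$. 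The cross terms between different $k$ are where the $\min$ enters: $A^{(i)}_{uv}A^{(j)}_{uv}=A^{(i\cup j)}_{uv}$ produces $\min(W_{(i)},W_{(j)})$ for nested index sets.

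I would first verify this via a law of large numbers for $\frac1{n^2}\sum_{u,v}t^-_{1,2}\,t^-_{1,2}\,(A^{(i)}_{uv}-W_{(i)})(A^{(j)}_{uv}-W_{(j)})$, conditioning on $\xi$. I would then separate this term from the smooth part in the contraction computation. If the identification of the cross-covariance with $\min(W_{(i)},W_{(j)})$ fails for non-nested $i,j$, this is the step where the argument would need the coupling structure $Y_{12}$ of Proposition~\ref{prop:3} to be imitated explicitly.
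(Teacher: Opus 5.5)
The step you leave open is where the argument fails, and it cannot be closed for the statement as written. Your computation is the right one. $A^{(i)}_{uv}A^{(j)}_{uv}$ is the edge indicator of the intersection graph over $S(i)\cup S(j)$. So the edge-noise parts of $\widehat Z_F(i)$ and $\widehat Z_F(j)$ produce $\Omega_{ij}$ with $\phi(W_{(i)},W_{(j)})$ replaced by $W_{(i\cup j)}-W_{(i)}W_{(j)}$, where $W_{(i\cup j)}$ is the graphon of that intersection graph. Since $W_{(i\cup j)}\le W_{(i)}\wedge W_{(j)}$, with equality only when the two edge indicators are comonotone, this agrees with $\phi(W_{(i)},W_{(j)})$ when $S(i)\subseteq S(j)$ or $S(j)\subseteq S(i)$, but not in general.

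Concretely, take two conditionally independent Erd\H{o}s--R\'enyi layers and $F=K_2$. Every $W_{(k)}$ is constant, hence $F$-regular, and the double integrals in \eqref{equ:7} vanish. For $u\neq v$ one has $\widehat W_F((u,v),G_n^{(k)})=A^{(k)}_{uv}/2$, reading $A^{(k)+}_{u,v}=A^{(k)}_{u,v}$ when $(a,b)\in E(F)$. The conditional covariance of $\widehat Z_F(1)$ and $\widehat Z_F(2)$ then tends to $\tfrac12(p_{(12)}-p_{(1)}p_{(2)})=0$, which is also the true limiting covariance of the two normalized edge counts. By contrast, \eqref{equ:5} gives $\sigma_{12}=\tfrac12(p_{(1)}\wedge p_{(2)}-p_{(1)}p_{(2)})>0$.

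The minimum comes from the kernel \eqref{eq:f_kernel} used in Proposition~\ref{prop:3}, where a single uniform $Y_{ab}$ is thresholded at every $W_{(k)}$. That forces $W_{(i\cup j)}=W_{(i)}\wedge W_{(j)}$, which the model of Theorem~\ref{thm:mutlialdous} does not imply. Imitating that coupling cannot help, because $\widehat{\mathbf Z}_F$ is a functional of the observed graphs, whose joint law is given. So the proposition can only be proved under this comonotonicity assumption, or with $\phi(W_{(i)},W_{(j)})$ replaced by $W_{(i\cup j)}-W_{(i)}W_{(j)}$ in $\Sigma$; the same correction is needed in Theorem~\ref{thm:main2}. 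The paper's proof never confronts this: the step where $\alpha_+^\top\Sigma\alpha_+$ has to emerge, namely the limit of the $\theta^2$-coefficient of the conditional log-MGF in Proposition~\ref{prop:mgf-convergence}, is asserted without proof.

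Separately, the tool you cite for the joint step does not apply. Peccati--Tudor and the fourth-moment criteria only produce Gaussian limits. The smooth part of $Q_n$ instead converges to a non-Gaussian double integral that is also dependent on the first-chaos limit: both are driven by the same $B$, and orthogonal chaoses are uncorrelated, not independent. Nor does the spectrum of $H:=h_n/n$ converge in $\ell^2$ to that of the limiting operator. A bulk of about $n$ eigenvalues of size $O(n^{-1/2})$ carries non-vanishing Hilbert--Schmidt mass, which is exactly the variance of the Gaussian component; the empirical kernel converges in cut norm but not in $L^2$.

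With $\tilde g:=n^{-1/2}g_n$, the conditional law of $(L_n,Q_n)$ is determined by $\|\tilde g\|^2$, by $\mathrm{tr}(H^L)$ for $L\ge 2$, and by $\langle\tilde g,H^L\tilde g\rangle$ for $L\ge 1$. This is what the paper's expansion in Lemma~\ref{lem:mgf-colored} organizes, after embedding the multipliers in a Brownian motion and replacing injective by homomorphism counts at cost $O(n^{-1})$.

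Your smooth/noise split controls these quantities once you also condition on $\xi$. Write $H=H_s+M+E$, with $H_s$ a fixed bounded kernel evaluated at $(\xi_u,\xi_v)$, $M$ the edge-noise matrix, and $\|E\|_F\to 0$. Replace $g_n$ by $g(\xi_v)$ at negligible cost. U-statistic laws of large numbers then give the limits for $(H_s,g)$. The matrix $M$ has conditionally independent centred entries of size $O(n^{-1})$, so $\|M\|_{\mathrm{op}}\to 0$ in probability while $\|M\|_F^2$ converges. Terms linear in $M$ vanish by a variance bound. Every other term containing $M$, apart from $\mathrm{tr}(M^2)$, is bounded by $\|M\|_{\mathrm{op}}$ times Frobenius norms, e.g.\ $|\mathrm{tr}(M^2H_s)|\le\|M\|_{\mathrm{op}}\|M\|_F\|H_s\|_F$. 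This yields the Gaussian component, its asymptotic independence from the smooth chaos components, and its covariance $2\lim\mathrm{tr}(M^{(i)}M^{(j)})$, which is the quantity computed above.
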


\begin{proof}[Proof of Proposition \ref{thm:multiplier-bootstrap}]
The proof builds upon the approach of Theorem 4.1 in \cite{Bhattacharya2024}, extending it to the setting of cross-layer motif counts. Although the core reasoning and main steps remain similar, the extension to the multiplex case introduces additional subtleties. For clarity and completeness, we therefore reproduce and adapt some of these steps for the reader.

Let $k \in \Lambda_d$. We begin by expressing the bootstrapped statistic $\widehat{Z}_F(k)$ in terms of stochastic integrals.

Let $I_s = \left[\frac{s-1}{n}, \frac{s}{n}\right[$ for $1 \leq s \leq n$, so that the collection $(I_s)_{s=1}^n$ forms a partition of $[0,1]$ into intervals of equal length $1/n$. Let $(B_t)_{t \in [0,1]}$ be a standard Brownian motion independent of the observed graphs $G_n^{(k)}$. Define
\begin{equation}
\label{eq:eta}
\eta_s := \int_{I_s} dB_s.
\end{equation}

Then the collection $\{\eta_1, \ldots, \eta_n\}$ is an i.i.d. sample from the normal distribution $\mathcal{N}(0,1/n)$.

We now define the following alternative representation of $\widehat{Z}_F(\text{Col}(k))$ using these Gaussian increments:
\begin{align}
\widehat{Z}'_F(k) := 
\begin{cases}
\displaystyle \sum\limits_{v=1}^n \left( \widehat{t}(v,F,G_n^{(k)}) - \bar{t}(F,G_n^{(k)}) \right) \eta_v & \text{if } k \in \Lambda'_d, \\[1.5ex]
\displaystyle \sum\limits_{u,v=1}^n \left( \widehat{W}_{F}((u,v), G_n^{(k)}) - \overline{W}_F(G_n^{(k)}) \right)\left( \eta_u\eta_v - \frac{\delta_{u,v}}{n} \right) & \text{otherwise},
\end{cases}
\label{eq:Zprime-def}
\end{align}
where $\widehat{t}(v,F,G_n^{(k)})$ and $\widehat{W}_{F}((u,v), G_n^{(k)})$ are as defined in equations \eqref{eq:avg_t} and \eqref{eq:W_hat}, respectively. 

It is clear that, by distributional equivalence of $\eta_s$ and $Z_s / \sqrt{n}$, we have
\begin{equation}
\widehat{Z}'_F(k) \overset{\text{Law}}{=} \widehat{Z}_F(k) \quad \text{for all } k \in \Lambda_d.
\end{equation}

Next, for $x \in [0,1]$, define the piecewise extension of the 1-point empirical homomorphism density:
\begin{equation}
\widehat{t}(x,F,G_n^{(k)}) := \widehat{t}(\lceil nx \rceil, F, G_n^{(k)}).
\end{equation}
Then, for every $v \in V(G_n^{(k)}),$ we have 
\begin{align}
\sum_{v=1}^n \int_{I_v} \widehat{t}(x,F,G_n^{(k)}) \, dB_v = \int_0^1 \widehat{t}(x,F,G_n^{(k)}) \, dB_x  \quad \text{and} \quad  \int_0^1 \widehat{t}(x,F,G_n^{(k)}) \, dx = \bar{t}(F,G_n^{(k)}).
\end{align}

Similarly, for $(x,y) \in [0,1]^2$, define
\begin{equation}
\widehat{W}_F((x,y), G_n^{(k)}) := \widehat{W}_F((\lceil nx \rceil, \lceil ny \rceil), G_n^{(k)}),
\end{equation}
so that, for every $u,v \in V(G_n^{(k)})$, we have 
\begin{align}
\sum_{u,v=1}^n \int_{I_u} \int_{I_v} \widehat{W}_F((x,y), G_n^{(k)}) \, dB_u \, dB_v = \int_{[0,1]^2} \widehat{W}_F((x,y),G_n^{(k)}) \, dB_x \, dB_y  
\end{align}
and 
\begin{align}
\int_{[0,1]^2} \widehat{W}_F((x,y), G_n^{(k)}) \, dx \, dy= \overline{W}_F(G_n^{(k)}).   
\end{align}

Using the Wick formula for stochastic integrals, we have
\begin{equation}
\eta_u \eta_v - \frac{\delta_{u,v}}{n} = \int_{I_u} \int_{I_v} dB_u \, dB_v.
\end{equation}
Substituting into \eqref{eq:Zprime-def}, we obtain continuous representations of the bootstrapped statistics :

\begin{itemize}
    \item If $k \in \Lambda'_d$, then
    \begin{equation}
    \widehat{Z}'_F(k) = \int_0^1 \left( \widehat{t}(x,F,G_n^{(k)}) - \int_0^1 \widehat{t}(x,F,G_n^{(k)}) \, dx \right) dB_x.
    \end{equation}
    
    \item If $k \notin \Lambda'_d$, then
    \begin{equation}
    \widehat{Z}'_F(k) = \int_{[0,1]^2} \left( \widehat{W}_F((x,y), G_n^{(k)}) - \int_{[0,1]^2} \widehat{W}_F((x,y), G_n^{(k)}) \, dx \, dy \right) dB_x \, dB_y.
    \end{equation}
\end{itemize}

For $k \in \Lambda_d$, denote by $W^{G_n^{(k)}}$ the empirical graphon associated to graph $G_n^{(i)}$ (see Definition \ref{def:empiricalgraphon}). We state the following convergence result.

\begin{lemma}
\label{lem:bootstrap-convergence}
For all $k \in \Lambda_d$, as $n \to \infty$,
\begin{equation}
\mathbb{E}\left[ \left| \widehat{Z}'_F(k) - Y_F(k) \right|^2 \,\Big|\, \mathbf{G}_n \right] \xrightarrow{\text{a.s.}} 0,
\end{equation}
where the limit random variable $Y_F(k)$ is given by:
\begin{itemize}
    \item If $k \in \Lambda'_d$,
    \begin{equation}
    Y_F(k) := \int_0^1 \left\{ \frac{1}{|\mathrm{Aut}(F)|} \sum_{a=1}^{|V(F)|} t_a(x,F,W^{G_n^{(k)}}) - \frac{|V(F)|}{|\mathrm{Aut}(F)|} t(F,W^{G_n^{(k)}}) \right\} dB_x.
    \end{equation}
    
    \item If $k \notin \Lambda'_d$,
    \begin{equation}
    Y_F(k) := \int_0^1 \int_0^1 \left\{ W_F^{G_n^{(k)}}(x,y) - \frac{|V(F)|(|V(F)|-1)}{2|\mathrm{Aut}(F)|} t(F,W^{G_n^{(k)}}) \right\} dB_x \, dB_y.
    \end{equation}
\end{itemize}
\end{lemma}

\begin{proof}[Proof of Lemma \ref{lem:bootstrap-convergence}]

We start the proof by showing the convergence for $k \in \Lambda'_d$, that is, when we have $W_{(k)}$-$F$-irregularity.

We first bound the conditional mean squared error:
\begin{align}
\mathbb{E}\left[ \left| \widehat{Z'}_F(k) - Y_F(k) \right|^2 \mid \mathbf{G_n}\right] 
&\leq 2 \underbrace{\mathbb{E}\left[  I_1^2 \left( \widehat{t}(x,F,G_n^{(k)}) - \frac{1}{|\mathrm{Aut}(F)|} \sum_{a=1}^{|V(F)|} t_a(x,F,W^{G_n^{(k)}})\right) \Big| \mathbf{G_n}\right]}_{=(A)}  \label{eq:A}\\
&\quad + 2 \underbrace{\mathbb{E}\left[ I_1^2\left( \int_0^1 \widehat{t}(x,F,G_n^{(k)}) \,dx - \frac{|V(F)|}{|\mathrm{Aut}(F)|}t(F,W^{G_n^{(k)}}) \right)  \Big| \mathbf{G_n}\right]}_{=(B)} \label{eq:B}
\end{align}

We start with term $(A)$:
\begin{align}
(A) &\leq \int_0^1 \left(\widehat{t}(x,F,G_n^{(k)}) - \frac{1}{|\mathrm{Aut}(F)|} \sum_{a=1}^{|V(F)|} t_a(x,F,W^{G_n^{(k)}})\right)^2 \,dx \\
&= \sum_{v=1}^n \int_{I_v} \left(\widehat{t}(x,F,G_n^{(k)}) - \frac{1}{|\mathrm{Aut}(F)|} \sum_{a=1}^{|V(F)|} t_a(x,F,W^{G_n^{(k)}})\right)^2 \,dx \label{equ:9.1}
\end{align}
where the first inequality comes from the boundedness property of stochastic integrals, that is $\mathbb{E}\left[I_d(f)^2\right] \leq d! \cdot ||f||^2$, and the second equality follows from the definition of $\widehat{t}(\cdot, F, G_n^{(k)})$ given in \eqref{eq:avg_t}.

For $1 \leq v \leq n$, recall:
\begin{align}
\widehat{t}(v,F, G_n^{(k)}) &= \frac{1}{|\mathrm{Aut}(F)|} \sum_{a=1}^{|V(F)|} \tilde{t}_a(v,F,G_n^{(k)}) \nonumber \\
&= \frac{1}{|\mathrm{Aut}(F)| n^{|V(F)|-1}} \sum_{a=1}^{|V(F)|} X_a(v,F,G_n^{(k)}),
\end{align}
where for $k \in \Lambda_d$
\begin{equation}
X_a(v,F,G_n^{(k)}) = \sum_{\mathbf{s}_{\{a\}^c}} 
\left( \prod_{j \in N_F(a)} A^{(k)}_{v, s_j} \right)
\left( \prod_{(k,j) \in E(F \setminus \{a\})} A^{(k)}_{s_k, s_j} \right).
\end{equation}

Then from \eqref{equ:9.1}, we have:
\begin{align}
(A) \leq \frac{1}{|\mathrm{Aut}(F)|^2} \sum_{v=1}^n \sum_{a=1}^{|V(F)|} \int_{I_v} \left( \frac{X_a(v,F,G_n^{(k)})}{n^{|V(F)|-1}} - t_a(x,F,W^{G_n^{(k)}}) \right)^2 \,dx  \label{equ:9.2}
\end{align}

Using the definition of the 1-point conditional homomorphism density for $i \in \Lambda_d$, we have
\begin{align}
t_a(x,F,W^{G_n^{(i)}}) = \int_{[0,1]^{|V(F)|-1}} &\prod_{u \in N_F(a)} W^{G_n^{(i)}}(x,x_u) \nonumber \\
& \times \prod_{(u,v) \in E(F \setminus \{a\})} W^{G_n^{(i)}} (x_u,x_v) \prod_{v \in V(F \setminus \{a\})} dx_v.
\end{align}

From the empirical graphon definition, $t_a(x,F,W^{G_n^{(i)}})$ can be equivalently written as follows : 
\begin{align}
t_a(x,F,W^{G_n^{(i)}}) &= \frac{1}{n^{|V(F)|-1}} \sum_{\tilde{\mathbf{s}}_{\{a\}^c}} 
\left( \prod_{j \in N_F(a)} A^{(i)}_{v, s_j} \right)
\left( \prod_{(k,j) \in E(F \setminus \{a\})} A^{(i)}_{s_k, s_j} \right) \label{eq:1-pointcondnoninj}
\end{align}
for all $x \in I_v$ and $1 \leq v \leq n$ and where $\tilde{\mathbf{{s}}}_{\{a\}^c}$ is the same as $\mathbf{s}_{\{a\}^c}$ but not necessarily with distinct indices. 
This representation is due to the fact that 
\begin{align}
t_a(x,F,W^{G_n^{(i)}}) = t_a(x,F,G_n^{(i)})= \frac{\# \{\varphi \in \text{hom}(F,G_n^{(i)}) \mid \varphi(a) = \lceil nx \rceil =v\}}{n^{|V(F)|-1}}.
\end{align}

Now, since the cardinality of $\tilde{\mathbf{{s}}}_{\{a\}^c}$ is $n^{|V(F)| - 1}$ (general homomorphisms), the cardinality of $\mathbf{s}_{\{a\}^c}$ is $\frac{n!}{(n - |V(F)| + 1)!}$ (injective homomorphisms), we have:
\begin{align}
\left| \frac{X_a(v,F,G_n^{(k)})}{n^{|V(F)|-1}} - t_a(x,F,W^{G_n^{(k)}}) \right| \leq \frac{1}{n},    
\end{align}
which implies from \eqref{equ:9.2}:
\begin{equation}
(A) \leq \frac{|V(F)|}{|\mathrm{Aut}(F)|^2} \cdot \frac{1}{n^2} \quad \text{a.s.} \label{eq:proofA}
\end{equation}

For term $(B)$ \eqref{eq:B}, 
\begin{align}
\left| \int_0^1 \widehat{t}(x,F,G_n^{(k)}) \,dx - \frac{|V(F)|}{|\mathrm{Aut}(F)|}t(F,W^{G_n^{(k)}}) \right| \leq \frac{|V(F)|}{|\mathrm{Aut}(F)|n} \label{equ:9.3}
\end{align}

Indeed, on one hand:
\begin{align}
\int_0^1 \widehat{t}(x,F,G_n^{(k)}) \,dx &= \sum_{v=1}^n \int_{I_v} \widehat{t}(x,F,G_n^{(k)}) \,dx \nonumber \\
&= \frac{1}{|\mathrm{Aut}(F)|} \sum_{v=1}^n \int_{I_v} \sum_{a=1}^{|V(F)|} \underbrace{\tilde{t}_a(x,F,G_n^{(k)})}_{= \frac{\# \left\{ \varphi : V(F) \to V(G_n^{(k)}) \ \middle| \ \varphi(a) = \lceil nx \rceil = v\text{ , } \varphi \text{ inj hom} \right\}}{n^{|V(F)| - 1}} } \,dx 
\end{align}

Also, on the other hand,
\begin{align}
\frac{|V(F)|}{|\mathrm{Aut}(F)|}t(F,W^{G_n^{(k)}}) &= \frac{1}{|\mathrm{Aut}(F)|} \sum_{a=1}^{|V(F)|} \int_0^1 t_a(x,F,W^{G_n^{(k)}}) \,dx \nonumber \\
&= \frac{1}{|\mathrm{Aut}(F)|} \sum_{v=1}^n \int_{I_v} \sum_{a=1}^{|V(F)|}  \underbrace{t_a(x,F,W^{G_n^{(k)}})}_{=\frac{\# \left\{ \varphi : V(F) \to V(G_n^{(k)}) \ \middle| \ \varphi(a) = \lceil nx \rceil \text{ and } \varphi \text{ hom} \right\}}{n^{|V(F)| - 1}} } \,dx,
\end{align}
since $ t(F,W^{G_n^{(k)}}) = \displaystyle \int_0^1 t_a(x,F,W^{G_n^{(k)}}) \,dx,$ for all $1 \leq a \leq |V(F)|$.

Thus, using \eqref{equ:9.3} and the boundedness property of stochastic integral, we get :
\begin{align}(B) \leq \frac{|V(F)|^2}{|\mathrm{Aut}(F)|^2 n^2} \quad \text{a.s.}  \label{eq:proofB}
\end{align}

Combining \eqref{eq:proofA} and \eqref{eq:proofB}, we conclude the case $k \in \Lambda'_d$.

Now consider $k \in \Lambda_d \setminus \Lambda'_d$ (i.e., when we have $W_{(k)}$-$F$-regularity).
For all $k \in \Lambda_d$, note that :
\begin{align}t(F,W^{G_n^{(k)}}) = \int_0^1 \int_0^1 W_F^{G_n^{(k)}}(x,y) \,dx \,dy.
\end{align}

Now, observe that 
\begin{align}
&\mathbb{E}\left[ \left| \widehat{Z'}_F(k) - Y_F(k) \right|^2 \mid \mathbf{G_n}\right] 
\leq 2 \underbrace{\mathbb{E} \left [ I_2^2 \left(\widehat{W}_F((x,y), G_n^{(k)}) - W_F^{G_n^{(k)}}(x,y)\right)\right]}_{=(C)} \label{eq:C}\\
&\quad + 2 \underbrace{\mathbb{E} \left [ I_2^2 \left( \int_{[0,1]^2}\widehat{W}_F((x,y), G_n^{(k)}) \,dx\,dy - \frac{|V(F)|(|V(F)|-1)}{2|\mathrm{Aut}(F)|}t(F,W^{G_n^{(k)}})\right)\right]}_{=(D)} \label{eq:D}
\end{align}

Again using the same reasoning as for terms $(A)$ \eqref{eq:A} and $(B)$ \eqref{eq:B} in the case $k \in \Lambda'_d$, we find:
\begin{align}
(C) &\leq \int_0^1 \int_0^1 \left(\widehat{W}_F((x,y), G_n^{(k)}) - W_F^{G_n^{(k)}}(x,y) \right)^2 \,dx\,dy \nonumber\\
&\leq \frac{|V(F)|^2}{4|\mathrm{Aut}(F)|^2} \cdot \frac{1}{n^2} \quad \text{a.s.} \label{eq:proofC}
\end{align}

We also find the same upper bound for $(D)$. This concludes the proof of Lemma \ref{lem:bootstrap-convergence}.

\end{proof}

Since 
\[
\widehat{\mathbf{Z}'}_F := (\widehat{Z'}_F(k))_{k \in \Lambda_d}
\]
has the same distribution as 
\[
\mathbf{Z}_F := (\widehat{Z}_F(k)_{k \in \Lambda_d},
\]
Lemma \ref{lem:bootstrap-convergence} implies that, to prove Theorem \ref{thm:multiplier-bootstrap}, it suffices to show the conditional convergence in distribution
\begin{equation}
\label{eq:conv}
\mathbf{Y}_F \mid \mathbf{G}_n = (Y_F(k))_{k \in \Lambda_d} \mid \mathbf{G}_n
\quad \xrightarrow{d} \quad
\mathbf{Z} = (Z(k))_{k \in \Lambda_d}.
\end{equation}

We now state one lemma and one proposition that establish this convergence and thereby complete the proof of Theorem \ref{thm:multiplier-bootstrap}. Their proofs are omitted here, as they can be readily adapted from \cite{Bhattacharya2024}. 

In particular, our goal is to show that for any \(\alpha \in \mathbb{R}^{2^d - 1}\), the moment generating function (MGF) of \(\alpha^T \mathbf{Y}_F\) conditioned on the graphs \(\left(G_n^{(k)}\right)_{k \in \Lambda_d}\) converges almost surely to the MGF of \(\alpha^T \mathbf{Z}\).

Before stating the lemma and proposition, we introduce the following definitions.

\medskip

Let $U: [0,1]^2 \to \mathbb{R}$ be any symmetric function. For $L \geq 2$, define the $L$-th \emph{path composition} of $U$ as
\begin{equation} \label{eq:path-composition}
U^{(L)}(x, y) := \int_{[0,1]^{L-1}} U(x, z_1) U(z_1, z_2) \cdots U(z_{L-1}, y) \, dz_1 \cdots dz_{L-1}.
\end{equation}

Now let $\alpha = (\alpha_k)_{k \in \Lambda_d}^T \in \mathbb{R}^{2^d - 1}$. Define the function $V_{\alpha} : [0,1] \to \mathbb{R}$ by
\begin{equation} \label{eq:valpha}
\begin{aligned}
V_{\alpha}(x) := \sum_{k \in \Lambda'_d} \alpha_k \bigg[ 
&\frac{1}{|\mathrm{Aut}(F)|} \sum_{a=1}^{|V(F)|} t_a(x, F, W_{(k)}) - \frac{|V(F)|}{|\mathrm{Aut}(F)|} \cdot t(F, W_{(k)})
\bigg],
\end{aligned}
\end{equation}
and define $U_{\alpha} : [0,1]^2 \to \mathbb{R}$ by
\begin{equation} \label{eq:ualpha}
\begin{aligned}
U_{\alpha}(x, y) := \sum_{k \in \Lambda_d \setminus \Lambda'_d} \alpha_k \bigg[
& W^{(k)}_F(x, y) - \frac{|V(F)|(|V(F)| - 1)}{2|\mathrm{Aut}(F)|} t(F, W_{(k)})
\bigg].
\end{aligned}
\end{equation}

Define also the variances:
\begin{equation} \label{eq:eta-alpha}
\eta_{\alpha} := \| V_{\alpha} \|_2^2,
\end{equation}
and
\begin{equation} \label{eq:eta-alpha-tilde}
\tilde{\eta}_{\alpha} := 2 \| U_{\alpha} \|_2^2 + \alpha_{+}^T \Sigma \alpha_{+},
\end{equation}
where $\alpha_{+} := (\alpha_k)_{k \in \Lambda_d \setminus \Lambda'_d}$ and $\Sigma$ is the covariance matrix defined in equation~\eqref{equ:5}.

We then state the following lemma and proposition, which together establish the convergence in \eqref{eq:conv}.

\begin{lemma}[MGF of the cross-layer motif counts] \label{lem:mgf-colored}
Fix $\alpha = (\alpha_k)_{k \in \Lambda_d}^T \in \mathbb{R}^{2^d - 1}$ and define
\[
\mathcal{C} := \sum_{k \in \Lambda_d \setminus \Lambda'_d} |\alpha_k| \cdot \frac{|V(F)|(|V(F)| - 1)}{|\mathrm{Aut}(F)|}.
\]
Then for all $\theta$ such that $|\theta| < \frac{1}{32 \mathcal{C}}$, the following holds:
\begin{align}
\log \mathbb{E} \left[e^{\theta \alpha^T \mathbf{Z}}\right]
= (\eta_{\alpha} + \tilde{\eta}_{\alpha}) \frac{\theta^2}{2}
&+ \sum_{L=1}^{\infty} 2^{L-1} \theta^{L+2} \int_{[0,1]^2} V_{\alpha}(x) V_{\alpha}(y) U_{\alpha}^{(L)}(x, y) \, dx\,dy + \frac{1}{2} \sum_{L=3}^{\infty} \frac{(2\theta)^L}{L} \int_0^1 U_{\alpha}^{(L)}(x, x) \, dx, \label{eq:mgf-Z}
\end{align}
where $U_{\alpha}^{(L)}$ is the $L$-th path composition defined as in~\eqref{eq:path-composition}.
\end{lemma}

\begin{proposition} \label{prop:mgf-convergence}
For any $\alpha \in \mathbb{R}^{2^d - 1}$ and $|\theta| < \frac{1}{32 \mathcal{C}}$, we have
\[
\lim_{n \to \infty} \log \mathbb{E} \left[e^{\theta \alpha^T \mathbf{Y}_F} \, \middle| \, \mathbf{G}_n \right]
= \log \mathbb{E} \left[e^{\theta \alpha^T \mathbf{Z}}\right],
\]
on a set $\mathcal{A}$ with probability one.
\end{proposition}

\end{proof}

\section{Proof of Proposition \ref{prop:SIGMA}}
\label{append:F}

\begin{proof}
By Itô isometry applied to the stochastic integral defining $Z_k$ in \eqref{equ:6}, we have
\begin{align}
\mathrm{Var}(Z_k) &= \int_0^1 \frac{1}{|\mathrm{Aut}(F)|^2} \mathbb{E}\left[ T_k(x)^2 \right] dx \notag \\
&= \frac{1}{|\mathrm{Aut}(F)|^2}  \int_0^1 T_k(x)^2 \ \, dx,
\label{eq:var_ito}
\end{align}
where for each $k \in \Lambda_d$ and $x \in [0,1]$, we denote
\begin{align}
T_k(x) := \sum_{a=1}^{|V(F)|} t_a(x,F,W_{(k)}) - |V(F)| \, t(F,W_{(k)}).
\end{align}

Next, for all $k\in \Lambda_d$, we expand the product $T_k(x)^2$ as
\begin{align}
T_k(x)^2 = &\sum_{1 \leq a,b \leq |V(F)|} t_a(x,F,W_{(k)}) \cdot t_b(x,F,W_{(k)})
- |V(F)| \, t(F,W_{(k)}) \sum_{a=1}^{|V(F)|} t_a(x,F,W_{(k)}) \notag \\
& - |V(F)| \, t(F,W_{(k)}) \sum_{a=1}^{|V(F)|} t_a(x,F,W_{(k)})+ |V(F)|^2 \, t(F,W_{(k)})^2.
\label{eq:TiTj_expand}
\end{align}

Integrating both sides of \eqref{eq:TiTj_expand} over the interval $[0,1]$ and using the identities
\[
\int_0^1 t_a(x,F,W_{(k)}) \, dx = t(F,W_{(k)}), \quad t_a(x,F,W_{(k)}) \cdot t_b(x,F,W_{(k)}) = t\bigl(F \bigoplus_{a,b} F, W_{(k)}\bigr),
\]
we obtain exactly the expression stated in \eqref{equ:vargen}, which completes the proof.

\end{proof}

\section{Testing $W_{(k)}-F-$regularity}
\label{append:G}

Having defined the empirical variance in \eqref{eq:empirical-varicance-gen}, we state the following result, which extends Proposition~5.1 in \cite{Bhattacharya2024}. 

\begin{proposition}
\label{prop:G1}
Fix $k \in \Lambda_d$. Suppose $\sigma^2(k, \mathbf{G}_n)$ is defined as in \eqref{eq:empirical-varicance-gen}. Then the following hold:
\begin{enumerate}
    \item When $W_{(k)}$ is $F$-regular, we have
    \[
    \sqrt{n} \, \sigma^2(k, \mathbf{G}_n) \xrightarrow{P} 0.
    \]
    \item When $W_{(k)}$ is $F$-irregular, we have
    \[
    \sqrt{n} \, \sigma^2(k, \mathbf{G}_n) \xrightarrow{P} \infty.
    \]
\end{enumerate}
\end{proposition}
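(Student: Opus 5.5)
The plan is to show that $\sigma^2(k,\mathbf{G}_n)$ is a consistent estimator of $\sigma^2(k)$ from Lemma~\ref{prop:SIGMA}, with fluctuations of order $n^{-1}$ rather than $n^{-1/2}$ in the regular case. Since $\sigma^2(k,\mathbf{G}_n)$ depends only on the intersection graph $G_n^{(k)}$, which is itself an exchangeable graph $G(n,W_{(k)})$ (its edges are conditionally independent Bernoulli$(W_{(k)}(\xi_i,\xi_j))$ given the latent variables), the argument reduces to the single-graph setting of \cite[Proposition~5.1]{Bhattacharya2024} with $W$ replaced by $W_{(k)}$. I would nevertheless write out the key steps.

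\textbf{Step 1: rewrite the estimator as a U-statistic.} Write $\sigma^2(k,\mathbf{G}_n)=\frac{1}{n}\sum_{v}\bigl(\widehat{t}(v,F,G_n^{(k)})-\bar t(F,G_n^{(k)})\bigr)^2$ up to an $O(1/n)$ deterministic error. The error comes from replacing injective homomorphisms of the join $H=F\bigoplus_{a,b}F$ by products of one-point counts, which differ only on tuples with coincident vertices.

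Each $\widehat t(F\bigoplus_{a,b}F,G_n^{(k)})$ and $\widehat t(F,G_n^{(k)})$ is a normalized generalized U-statistic in $(\xi_i,Y_{ij})$. This is the same representation \eqref{eq:Ustat_form} used in Appendix~\ref{append:B}, now with kernel built from $W_{(k)}$.

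\textbf{Step 2: the irregular case.} Here we only need a law of large numbers. By the U-statistic representation and the bounded-differences or variance bounds of \cite{BickelChenLevina2011}, we have $\widehat t(H,G_n^{(k)})\to t(H,W_{(k)})$ and $\widehat t(F,G_n^{(k)})\to t(F,W_{(k)})$ in probability. Hence $\sigma^2(k,\mathbf{G}_n)\to\sigma^2(k)$. By Lemma~\ref{prop:SIGMA} and the discussion after it, $F$-irregularity means $\sigma^2(k)>0$, so $\sqrt n\,\sigma^2(k,\mathbf{G}_n)\to\infty$ in probability.

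\textbf{Step 3: the regular case.} This is the main obstacle, because we must show $\sigma^2(k,\mathbf{G}_n)=o_P(n^{-1/2})$ and not merely $o_P(1)$. I would decompose
\[
\widehat t(v,F,G_n^{(k)})-\bar t(F,G_n^{(k)})=\Bigl[\widehat t(v)-\tfrac{1}{|\mathrm{Aut}(F)|}\textstyle\sum_a t_a(\xi_v,F,W_{(k)})\Bigr]+\Bigl[\tfrac{1}{|\mathrm{Aut}(F)|}\textstyle\sum_a t_a(\xi_v,F,W_{(k)})-\bar t\Bigr].
\]
Under $F$-regularity, $\sum_a t_a(\xi_v,F,W_{(k)})=|V(F)|\,t(F,W_{(k)})$ almost surely. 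The second bracket therefore equals $\frac{|V(F)|}{|\mathrm{Aut}(F)|}t(F,W_{(k)})-\bar t$, which is $O_P(n^{-1})$. This is because the centered count $X_F-\mathbb E X_F$ is $O_P(n^{|V(F)|-1})$ by Theorem~\ref{thm:main2} in the regular regime.

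For the first bracket, condition on $\xi_v$. The quantity $\widehat t(v)$ is an average over $(|V(F)|-1)$-tuples of the remaining vertices of products of edge indicators. Its conditional variance is $O(1/n)$, since two tuples are correlated only when they share a vertex other than $v$, and its conditional bias is $O(1/n)$. Thus $\mathbb E[(\text{first bracket})^2]=O(1/n)$. Averaging over $v$ gives $\mathbb E\,\sigma^2(k,\mathbf{G}_n)=O(1/n)$, and Markov's inequality yields $\sqrt n\,\sigma^2(k,\mathbf{G}_n)=O_P(n^{-1/2})\to0$.

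The delicate point is bounding the conditional variance of the one-point counts uniformly in $v$. This uses the fact that edge indicators incident to $v$ are shared across tuples. Those terms still contribute only $O(1/n)$, because all of them factor through $\xi_v$, which is conditioned on, together with independent edges and latents. I would handle this by a direct covariance count over pairs of tuples sharing $j\ge1$ extra vertices, each contributing $O(n^{-j})$.
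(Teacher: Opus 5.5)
Your argument is correct, but it takes a different route from the paper. The paper never conditions on the latent variables. It replaces $\sigma^2(k,\mathbf{G}_n)$ by the plug-in quantity $\sigma^2(k,\mathbf{W}^{\mathbf{G}_n})$, built from homomorphism densities of the empirical graphon $W^{G_n^{(k)}}$; this costs only an $O(1/n)$ deterministic error. It then observes that this quantity is a fixed linear combination of $t(F\bigoplus_{a,b}F,\cdot)$ and $t(F,\cdot)^2$, and uses that each such density has expectation within $O(1/n)$ of its graphon value under $G(n,W_{(k)})$ (Lemma~2.4 of \cite{Lovasz2006}). Hence $\mathbb{E}\,\sigma^2(k,\mathbf{W}^{\mathbf{G}_n})=\sigma^2(k)+O(1/n)$, which is $O(1/n)$ under regularity. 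In the irregular case, consistency comes from convergence of homomorphism densities under joint cut-metric convergence.

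You instead condition on $\xi_v$ and compare each one-point count with $\frac{1}{|\mathrm{Aut}(F)|}\sum_a t_a(\xi_v,F,W_{(k)})$ via an explicit covariance count. Tuples that are disjoint outside $v$ are conditionally independent given $\xi_v$, so the conditional variance is $O(1/n)$ uniformly in $\xi_v$. You handle the irregular case with a U-statistic law of large numbers.

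The paper's version is shorter, because linearity in homomorphism densities lets it reuse an off-the-shelf bias bound. It does leave implicit that $\sigma^2(k,\mathbf{W}^{\mathbf{G}_n})\ge 0$ (it is the variance in $x$ of $\sum_a t_a(x,F,W^{G_n^{(k)}})$), which the passage from expectation to probability needs. Yours is more elementary and exposes the mechanism: each one-point count fluctuates at scale $n^{-1/2}$ around its population value, and regularity is exactly what makes that value the same for every $v$. The empirical variance is therefore pure noise of order $1/n$.

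There is one loose joint in Step~3, and fixing it also shortens the step. You control the second bracket only in probability, so the claimed bound $\mathbb{E}\,\sigma^2(k,\mathbf{G}_n)=O(1/n)$ does not follow as written. The conclusion $\sqrt n\,\sigma^2(k,\mathbf{G}_n)\to 0$ in probability still does.

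The fix: $\bar t(F,G_n^{(k)})$ is exactly the average of the $\widehat t(v,F,G_n^{(k)})$, so $\frac1n\sum_v(\widehat t(v)-\bar t)^2\le\frac1n\sum_v(\widehat t(v)-c)^2$ for every constant $c$. Take $c=\frac{|V(F)|}{|\mathrm{Aut}(F)|}t(F,W_{(k)})$, which under regularity equals $\frac{1}{|\mathrm{Aut}(F)|}\sum_a t_a(\xi_v,F,W_{(k)})$ almost surely. Then only your first bracket remains, the expectation bound holds, and the appeal to Theorem~\ref{thm:main2} becomes unnecessary.
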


\medskip
\noindent
Following the approach of \cite{Bhattacharya2024}, we define the test statistic
\begin{equation}
    \phi(k, F, \mathbf{G}_n) := \mathbf{1}\big\{\sqrt{n}\,\sigma^2(k, \mathbf{G}_n) > 1 \big\},
    \label{eq:test-statistic}
\end{equation}
for each $k \in \Lambda_d$.
Proposition \ref{prop:G1} implies that, under $H_0$ (i.e., when the $F-$regularity for $W_{(k)}$ is satisfied), we have 
\begin{equation}
    \mathbb{P}\big( \phi(k, F, \mathbf{G}_n) = 1 \big) \to 0,
\end{equation}
while under $H_1$ this probability converges to $1$. Hence, the test statistic~\eqref{eq:test-statistic} is consistent for detecting $W_{(k)}$-$F$-irregularity. This, in turn, enables us to define the set 
\begin{equation}
    S(F,\mathbf{G}_n) := \big\{ k \in \Lambda_d: \sqrt{n}\,\sigma^2(k, \mathbf{G}_n) > 1 \big\}
    \label{eq:rejection-set-gen}
\end{equation}
of indices for which the hypothesis of $W_{(k)}-F$
-regularity is rejected.

\begin{proof}[Proof of Proposition \ref{prop:G1}]
For all $k \in \Lambda_d$, define
\begin{align}
    \sigma^2(k, \mathbf{W}^{\mathbf{G}_n}) 
    &:= \frac{1}{|\mathrm{Aut}(F)|^2}  \times \left[ \sum_{1 \leq a,b \leq |V(F)|} 
    t\!\left(F \bigoplus_{a,b} F, W^{G_n^{(k)}}\right)
    - |V(F)|^2 \, t\!\left(F,W^{G_n^{(i)}}\right)^2  \right],
    \label{eq:variance-W}
\end{align}
where, for $k \in \Lambda_d$,
\begin{equation}
    t(F,W^{G_n^{(k)}}) 
    := \frac{1}{n^{|V(F)|}} \sum_{s \in [n]^{|V(F)|}} 
    \prod_{(a,b) \in E(F)} A^{(k)}_{s_a s_b}.
    \label{eq:t-F-W}
\end{equation}

\vspace{0.5em}

We proceed as in the proof of Theorem~\ref{thm:multiplier-bootstrap}, showing that
\begin{equation}
    \left| \sigma^2(k, \mathbf{G}_n) - \sigma^2(k, \mathbf{W}^{\mathbf{G}_n}) \right| 
    \leq O\!\left(\frac{1}{n}\right) \quad \text{a.s.}
    \label{eq:empirical-vs-W}
\end{equation}

In fact, observe that 
\begin{align}
    \big| \sigma^2(k, \mathbf{G}_n) - \sigma^2(k, \mathbf{W}^{\mathbf{G}_n}) \big| &\leq \frac{1}{|\mathrm{Aut}(F)|^2} \bigg[ \sum_{1 \leq a,b \leq |V(F)|} \left( \frac{1}{n} \sum_{v=1}^n \right.  \nonumber\\
    & \underbrace{\big| \tilde{t}_a(v,F,G_n^{(k)}) \cdot \tilde{t}_b(v,F,G_n^{(k)}) - t_a(v,F,W^{G_n^{(k)}}) \cdot t_b(v,F,W^{G_n^{(k)}}) \big|}_{=(A)} \bigg) \label{eq:A2}\\
    &+ |V(F)|^2 \underbrace{\big| \widehat{t}(F,G_n^{(k)}) ^2 - t(F,W^{G_n^{(k)}})^2}_{=(B)} \bigg]. \label{eq:B2}
\end{align}

Since $n^{|V(F)|} \geq |[n]_{|V(F)|}|,$ (recall that $|[n]_{|V(F)|}| = \binom{n}{|V(F)|} |V(F)|!$), we get 
    \begin{equation}
    (B) \leq \left| \frac{1}{n^{2|V(F)|}} \big[ {S^{(k)}}^2  - {S_{inj}^{(k)}}^2\big] \right|, \label{eq:bound-B}
    \end{equation}
    where for $k \in \Lambda_d,$ we write 
    \begin{align}
    S^{(k)} = \sum_{s \in [n]^{|V(F)|}} \prod_{(a,b) \in E(F)} A^{(k)}_{s_a s_b}, \label{eq:S_i}
    \end{align}
    and
    \begin{align}
    S_{inj}^{(k)} = \sum_{s \in [n]_{|V(F)|}} \prod_{(a,b) \in E(F)} A^{(k)}_{s_a s_b}. \label{eq:S_inj}
    \end{align}

Then,
    \begin{align}
    \left| {S^{(k)}}^2 - {S_{inj}^{(k)}}^2  \right| \leq \left| S^{(k)} - S_{inj}^{(k)} \right| \cdot \left| S^{(k)} \right| + \left| S_{inj}^{(k)} \right| \cdot \left| S^{(k)} - S_{inj}^{(k)} \right|. \label{eq:SS}
    \end{align}

Observe that, for all $k \in \Lambda_d$, we have
\begin{equation}
\label{eq:bound-diff-S}
\big| S^{(k)} - S_{\mathrm{inj}}^{(k)} \big| \leq O\!\left(n^{|V(F)| - 1}\right),
\end{equation}
since the entries of $A^{(i)}$ are bounded by $1$ and, recalling the cardinality of $[n]_{|V(F)|}$,
\begin{align}
\big| S^{(k)} - S_{\mathrm{inj}}^{(k)} \big| 
&\leq n^{|V(F)|} - |[n]_{|V(F)|}| \nonumber \\
&= n^{|V(F)|} - n^{|V(F)|} \left( 1 - \frac{|V(F)|(|V(F)|-1)}{2n} + O(n^{-2}) \right) \label{eq:bound-cardinality} \\
&= O\!\left(n^{|V(F)| - 1}\right). \nonumber 
\end{align}

From \eqref{eq:SS} and \eqref{eq:bound-diff-S}, we get 
    \begin{align}
    \left| {S^{(k)}}^2 - {S_{inj}^{(k)}}^2 \right| &\leq O(n^{|V(F)| - 1}) \cdot n^{|V(F)|} + n^{|V(F)|} \cdot O(n^{|V(F)| - 1}) \nonumber \\
    &= O(n^{2|V(F)| - 1}). \label{eq:final}
    \end{align}

Hence, \eqref{eq:final} shows that $(B) \leq O\left(\frac{1}{n}\right).$

\vspace{5mm}
    
Now, for term (A) as defined in \eqref{eq:A2}, we can write
\begin{equation}
\label{eq:A-term}
(A) = \left| \frac{1}{n^{2|V(F)| - 2}} \left[ S_a^{(k)} S_b^{(k)} - S_{\mathrm{inj}_a}^{(k)} S_{\mathrm{inj}_b}^{(k)} \right] \right|,
\end{equation}
where, for $i \in \Lambda_d$ and $1 \leq a \leq |V(F)|$,
\begin{align}
S_a^{(i)} 
&:= \sum_{\mathbf{s}_{\{a\}^c}} 
\left( \prod_{j \in N_F(a)} A^{(i)}_{v, s_j} \right) 
\left( \prod_{(k,j) \in E(F \setminus \{a\})} A^{(i)}_{s_k s_j} \right), 
\label{eq:Sa-def} \\
S_{\mathrm{inj}_a}^{(i)} 
&:= \sum_{\tilde{\mathbf{s}}_{\{a\}^c}} 
\left( \prod_{j \in N_F(a)} A^{(i)}_{v, s_j} \right) 
\left( \prod_{(k,j) \in E(F \setminus \{a\})} A^{(i)}_{s_k s_j} \right),
\label{eq:Sainj-def}
\end{align}
with the notation $\mathbf{s}_{\{a\}^c}$ (resp. $\tilde{\mathbf{s}}_{\{a\}^c}$) denoting a tuple of vertices without injectivity constraints (resp. with injectivity constraints).

Using the bound from \eqref{eq:bound-diff-S} and noting that both $S_a^{(k)}$ and $S_{\mathrm{inj}_a}^{(k)}$ are sums over configurations involving exactly $|V(F)| - 1$ vertices, we have
\begin{align}
\left| S_a^{(k)} S_b^{(k)} - S_{\mathrm{inj}_a}^{(k)} S_{\mathrm{inj}_b}^{(k)} \right| 
&\leq \underbrace{\left| S_a^{(i)} - S_{\mathrm{inj}_a}^{(k)} \right|}_{\leq O(n^{|V(F)| - 2})} 
\underbrace{\left| S_b^{(k)} \right|}_{\leq n^{|V(F)| - 1}}
+ \left| S_{\mathrm{inj}_a}^{(k)} \right| \cdot 
\underbrace{\left| S_b^{(k)} - S_{\mathrm{inj}_b}^{(k)} \right|}_{\leq O(n^{|V(F)| - 2})} 
\nonumber \\
&= O\!\left(n^{2|V(F)| - 3}\right).
\label{eq:bound-SA-Sinj}
\end{align}
Combining \eqref{eq:A-term} and \eqref{eq:bound-SA-Sinj} yields
\begin{equation}
(A) \leq O\!\left( \frac{1}{n} \right).
\label{eq:A-term-bound}
\end{equation}

Thus, we have  
\begin{equation}
\big| \sigma^2(k, \mathbf{G}_n) - \sigma^2(k, \mathbf{W}^{\mathbf{G}_n}) \big| \leq O\left(\frac{1}{n}\right).
\label{equ:bigO}
\end{equation}

Now, consider the case of $W_{(k)}$-$F$-regularity.  
It suffices to prove that
\begin{equation}
\mathbb{E} \left[ \sigma^2(k, \mathbf{W}^{\mathbf{G}_n}) \right] = O\left(\frac{1}{n}\right),
\label{equ:show}
\end{equation}
since, by \eqref{equ:bigO}, the condition $\sqrt{n}\,\sigma^2(k, \mathbf{G}_n) \xrightarrow{P} 0$ holds if and only if  
$\sqrt{n}\,\sigma^2(k, \mathbf{W}^{\mathbf{G}_n}) \xrightarrow{P} 0$.

\vspace{5mm}
Now observe that  
\begin{align}
    \mathbb{E} \big[ \sigma^2(k, \mathbf{W}^{\mathbf{G}_n}) \big] 
    &= \frac{1}{|\mathrm{Aut}(F)|^2} 
       \Bigg[ \sum_{a,b=1}^{|V(F)|} 
       \mathbb{E} \bigg[ t \!\left( F \bigoplus_{a,b} F, W^{G_n^{(k)}}\right) \bigg] - |V(F)|^2 \, \mathbb{E} \big[ t(F,W^{G_n^{(k)}})^2 \big] \Bigg].
    \label{equ:exp-sigma-W}
\end{align}

Using points~(b) and~(c) of Lemma~2.4 in~\cite{Lovasz2006}, we have for all $k \in \Lambda_d$,
\begin{equation}
\bigg| \mathbb{E} \!\left[ t(F,W^{G_n^{(k)}})^2  \right]  
      - t(F,W_{(k)})^2 \bigg| 
      \leq O\!\left(\frac{1}{n}\right).
\label{equ:bound-product}
\end{equation}

Similarly, for all $k \in \Lambda_d$ and $1 \leq a,b \leq |V(F)|$, one shows
\begin{align}
\bigg| \mathbb{E} \!\left[ t \!\left( F \bigoplus_{a,b} F, W^{G_n^{(k)}} \right) \right] 
      - t \!\left( F \bigoplus_{a,b} F, W_{(k)} \right) \bigg|
      \leq O\!\left( \frac{1}{n} \right).
\label{equ:bound-oplus}
\end{align}

Plugging \eqref{equ:bound-product} and \eqref{equ:bound-oplus} into \eqref{equ:exp-sigma-W}, we obtain
\begin{align}
    \mathbb{E} \big[ \sigma^2(k, \mathbf{W}^{\mathbf{G}_n}) \big]
    &\leq \frac{1}{|\mathrm{Aut}(F)|^2}  
       \Bigg[ \sum_{a,b=1}^{|V(F)|} 
       t \!\left( F \bigoplus_{a,b} F, W_{(k)}\right)  - |V(F)|^2 \, t(F,W_{(k)})^2 \Bigg] 
       + O\!\left( \frac{1}{n} \right) \notag \\
    &= \sigma^2(k) + O\!\left( \frac{1}{n} \right),
\end{align}
where $\sigma^2(k)$ is defined in~\eqref{equ:vargen}.

\vspace{5mm}

Recall that $\sigma^2(k) = 0$ under $W_{(k)}$-$F$-regularity, which establishes \eqref{equ:show}.  
We now turn to the case where $W_{(k)}$-$F$-regularity does not hold.  
By Corollary~10.4 in \cite{Lovasz2012}, the continuous mapping theorem,  
and the assumption that the sequence of graphs $(G_n^{(k)})_{k \in \Lambda_d}$ converges in the joint cut-metric to $(W_{(k)})_{k \in \Lambda_d}$, we have
\begin{equation}
    \sigma^2(k, \mathbf{W}^{\mathbf{G}_n}) \xrightarrow{P} \sigma^2(k).
\end{equation}
Since $\sigma^2(k) > 0$ in the $W_{(k)}$-$F$-irregular case, it follows that
\begin{equation}
    \sqrt{n} \, \sigma^2(k, \mathbf{W}^{\mathbf{G}_n}) \xrightarrow{P} \infty.
\end{equation}
Therefore, by \eqref{equ:bigO}, we conclude that
\begin{equation}
    \sqrt{n} \, \sigma^2(k, \mathbf{G}_n) \xrightarrow{P} \infty.
\end{equation}
This completes the proof of Proposition~\ref{prop:G1}.

\end{proof}

\section{Hypothesis Tests}

\subsection{Proof of Lemma \ref{lem:firsttest}}
\label{append:hypo1}

\begin{proof}
Let $r_i, i \in \{1,2\}$ be the normalization factor as defined in \eqref{eq:scaling_factors} and let $\mathbf{Z}_F$ be as defined in \eqref{eq:Zgen1}.
 
The distance from $\mathbf{Z}_F$ to the hyperplane $\mathcal{H}_0$ is given by 
\begin{equation}
    \label{eq:d1}
    \mathcal{D} = \inf_{t \in \mathcal{H}_0} \|\mathbf{Z}_F\|_2 = \inf_{t \in \mathcal{H}_0} \sqrt{Z_F(G_n^{(1)})^2 + Z_F(G_n^{(2)})^2 + Z_F(G_n^{(1)}, G_n^{(2)})^2}
\end{equation}

Define the function 
\begin{equation}
    \label{equ:function1}
\begin{aligned}
    f(t_1,t_2,t_{12}) &:= \|\mathbf{Z}_F\|_2^2\\
    &= r_1^2(X_F(G_n^{(1)}) - c\cdot t_1)^2 + r_2^2(X_F(G_n^{(2)}) -c \cdot t_2)^2 + r_{12}^2(X_F(G_n^{(1)}, G_n^{(2)}) - c \cdot t_{12})^2,
\end{aligned}
\end{equation}
where $c:= \frac{\binom{n}{|V(F)|}|V(F)|!}{|\mathrm{Aut}(F)|}$.

\smallskip
Since the hyperplane $\mathcal{H}_0$ only involves $t_1$ and $t_2$, we can minimize $f$ with respect to $t_1$ and $t_2$ under the constraint $t_1 - t_2 = 0$. Also, note that $t_{12}$ is free and appears only in the third term, which is minimized by setting $t_{12} = X_F(G_n^{(1)}, G_n^{(2)})/c$. Thus, the minimal distance depends only on $t_1$ and $t_2$:

\begin{equation}
\tilde{f}(t_1,t_2) := r_1^2(X_F(G_n^{(1)}) -c \cdot t_1)^2 + r_2^2(X_F(G_n^{(2)}) - c \cdot t_2)^2, \quad \text{with } t_1 - t_2 = 0.
\end{equation}

Let $t := t_1 = t_2$. Then
\begin{equation}
\label{equ:plug1}
\tilde{f}(t,t) = r_1^2(X_F(G_n^{(1)}) - c \cdot t)^2 + r_2^2(X_F(G_n^{(2)}) - c \cdot t)^2.
\end{equation}

Then, differentiating and setting to zero 
\begin{equation}
\frac{d}{dt} \tilde{f}(t,t) = -2 c \cdot r_1^2 (X_F(G_n^{(1)}) - c \cdot t) - 2 c \cdot r_2^2 (X_F(G_n^{(2)}) - c \cdot t) = 0
\end{equation}
gives
\begin{equation}
r_1^2 (c \cdot t - X_F(G_n^{(1)}) + r_2^2 (c \cdot t - X_F(G_n^{(2)}) = 0 \quad \Rightarrow \quad t = \frac{r_1^2 X_F(G_n^{(1)}) + r_2^2 X_F(G_n^{(1)}}{c (r_1^2 + r_2^2)}.
\end{equation}

Plugging back in \eqref{equ:plug1}, the minimal squared distance is
\begin{equation}
\label{equ:distancesquared1}
\begin{aligned}
\mathcal{D}^2 = \tilde{f}(t,t) &= r_1^2 \left(X_F(G_n^{(1)}) - \frac{r_1^2 X_F(G_n^{(1)}) + r_2^2 X_F(G_n^{(2)})}{r_1^2 + r_2^2}\right)^2  \\& + r_2^2 \left(X_F(G_n^{(2)}) - \frac{r_1^2 X_F(G_n^{(1)}) + r_2^2 X_F(G_n^{(2)})}{r_1^2 + r_2^2}\right)^2 \\
&=  \frac{r_1^2 r_2^2 (r_1^2 + r_2^2) (X_F(G_n^{(1)}) - X_F(G_n^{(2)}))^2}{(r_1^2 + r_2^2)^2} \\
&= \frac{r_1^2 r_2^2 (X_F(G_n^{(1)}) - X_F(G_n^{(2)}))^2}{r_1^2 + r_2^2}.
\end{aligned}
\end{equation}

Taking the square root in \eqref{equ:distancesquared1} gives the desired result:
\begin{equation}
\mathcal{D} = \frac{r_1 r_2 |X_F(G_n^{(1)}) - X_F(G_n^{(2)})|}{\sqrt{r_1^2 + r_2^2}}.
\end{equation}
\end{proof}

\subsection{Proof of Lemma \ref{lem:fourthtest}}
\label{append:hypo2}

\begin{proof}
Recall that, for each $(a,b)\in[K]^2$, the normalized vector is
\[
\mathbf{Z}_F^{ab}
=
\left(
\frac{X_F(\mathcal{G}_{ab}^{(1)}) - c_{ab}(\theta_{ab}^{(1)})^{|E(F)|}}{n_{ab}^{|V(F)|-1}},
\frac{X_F(\mathcal{G}_{ab}^{(2)}) - c_{ab}(\theta_{ab}^{(2)})^{|E(F)|}}{n_{ab}^{|V(F)|-1}},
\frac{X_F(\mathcal{G}_{ab}^{(1)},\mathcal{G}_{ab}^{(2)}) - c_{ab}(\theta_{ab}^{(12)})^{|E(F)|}}{n_{ab}^{|V(F)|-1}}
\right),
\]
where
\[
c_{ab}
=
\frac{\binom{n_{ab}}{|V(F)|}|V(F)|!}{|\mathrm{Aut}(F)|}.
\]

The distance from the collection $\{\mathbf{Z}_F^{ab}\}_{(a,b)\in[K]^2}$ to the null set
\[
\mathcal H_0
=
\bigcap_{(a,b)\in[K]^2}
\mathcal H_0^{ab},
\qquad
\mathcal H_0^{ab}
=
\Big\{
(\theta_{ab}^{(1)},\theta_{ab}^{(2)},\theta_{ab}^{(12)})
:
\theta_{ab}^{(12)}=\theta_{ab}^{(1)}\theta_{ab}^{(2)}
\Big\},
\]
is given by
\begin{equation}
\mathcal D
=
\sqrt{
\sum_{(a,b)\in[K]^2}
\inf_{t\in\mathcal H_0^{ab}}
\|
\mathbf{Z}_F^{ab}
\|_2^2
}, 
\end{equation}
since the null set $\mathcal{H}_0$ is the cartesian product of the sets $\mathcal{H}_0^{ab}$.
\smallskip

Fix $(a,b)\in[K]^2$. Define
\begin{equation}
\begin{aligned}
f(\theta_{ab}^{(1)},\theta_{ab}^{(2)},\theta_{ab}^{(12)})
:=
\|
\mathbf Z_F^{ab}
\|_2^2
=
\frac{1}{n_{ab}^{2(|V(F)|-1)}} \Big[
\big( &X_F(\mathcal G_{ab}^{(1)})-c_{ab}(\theta_{ab}^{(1)})^{|E(F)|}\big)^2
+
\big(X_F(\mathcal G_{ab}^{(2)})-c_{ab}(\theta_{ab}^{(2)})^{|E(F)|}\big)^2\\
&+
\big(X_F(\mathcal G_{ab}^{(1)},\mathcal G_{ab}^{(2)})
- c_{ab}(\theta_{ab}^{(12)})^{|E(F)|}\big)^2\Big].
\end{aligned}
\end{equation}

Under the constraint
\(
\theta_{ab}^{(12)}=\theta_{ab}^{(1)}\theta_{ab}^{(2)},
\)
this becomes
\begin{equation}
\begin{aligned}
f(\theta_{ab}^{(1)},\theta_{ab}^{(2)})
=
\frac{1}{n_{ab}^{2(|V(F)|-1)}} \Big[
\big(&X_F(\mathcal G_{ab}^{(1)})-c_{ab}(\theta_{ab}^{(1)})^{|E(F)|}\big)^2
+
\big(X_F(\mathcal G_{ab}^{(2)})-c_{ab}(\theta_{ab}^{(2)})^{|E(F)|}\big)^2
\\& +
\big(X_F(\mathcal G_{ab}^{(1)},\mathcal G_{ab}^{(2)})
- c_{ab}(\theta_{ab}^{(1)}\theta_{ab}^{(2)})^{|E(F)|}\big)^2\Big].
\end{aligned}
\end{equation}

This function is minimized for $\theta_{ab}^{(1)*}$ and $\theta_{ab}^{(2)*}$ solving the system
\begin{equation}
\begin{cases}
\dfrac{\partial f}{\partial \theta_{ab}^{(1)}}(\theta_{ab}^{(1)*},\theta_{ab}^{(2)*}) = 0,\\[10pt]
\dfrac{\partial f}{\partial \theta_{ab}^{(2)}}(\theta_{ab}^{(1)*},\theta_{ab}^{(2)*}) = 0.
\end{cases}
\end{equation}

Computing the derivatives, we obtain
\begin{equation}
\label{eq:system}
\begin{cases}
-\dfrac{2c_{ab}|E(F)|}{n_{ab}^{2(|V(F)|-1)}}(\theta_{ab}^{(1)*})^{|E(F)|-1}
\Big(X_F(\mathcal G_{ab}^{(1)})-c_{ab}(\theta_{ab}^{(1)*})^{|E(F)|}\Big)
\\ \qquad
-2c_{ab}|E(F)|(\theta_{ab}^{(1)*}\theta_{ab}^{(2)*})^{|E(F)|-1}\theta_{ab}^{(2)*}
\Big(X_F(\mathcal G_{ab}^{(1)},\mathcal G_{ab}^{(2)})
- c_{ab}(\theta_{ab}^{(1)*}\theta_{ab}^{(2)*})^{|E(F)|}\Big)
=0,
\\[10pt]
-2c_{ab}|E(F)|(\theta_{ab}^{(2)*})^{|E(F)|-1}
\Big(X_F(\mathcal G_{ab}^{(2)})-c_{ab}(\theta_{ab}^{(2)*})^{|E(F)|}\Big)
\\ \qquad
-2c_{ab}|E(F)|(\theta_{ab}^{(1)*}\theta_{ab}^{(2)*})^{|E(F)|-1}\theta_{ab}^{(1)*}
\Big(X_F(\mathcal G_{ab}^{(1)},\mathcal G_{ab}^{(2)})
- c_{ab}(\theta_{ab}^{(1)*}\theta_{ab}^{(2)*})^{|E(F)|}\Big)
=0.
\end{cases}
\end{equation}

The system \eqref{eq:system} does not admit an explicit solution, but it can be solved numerically.
Substituting the solutions of the system into the function $f$ completes the proof.
\end{proof}

\section{Auxiliary Results}

\subsection{Multivariate Bernoulli}
\label{append:multibern}
We recall some basic elements of the multivariate Bernoulli distribution, introduced in \cite{Teugels1990}. Assume that $\{X_i\}_{i\in [n]}$ is a sequence of coupled Bernoulli random variables. For $i=1,2,\ldots,n$, we have  
\begin{equation}\label{eq:bernoulli-probs}
    \mathbb{P}(X_i=0) = q_i, 
    \quad  
    \mathbb{P}(X_i =1)= p_i,
\end{equation}
where we denote $0 < p_i = 1-q_i < 1$.  

\cite{Teugels1990} developed an algebraically convenient representation for the \emph{multivariate Bernoulli distribution}, defined as  
\begin{equation}\label{eq:multivar-bernoulli}
    p_{k_1, k_2, \ldots, k_n} := \mathbb{P}\{X_1=k_1, X_2= k_2, \ldots, X_n=k_n\},
\end{equation}
where $k_i \in \{0,1\}$ for $i=1,2,\ldots,n$. Such a representation should be parameterized by the $n$ mean values $\{p_i\}_{i\in[n]}$ and by additional parameters that capture the cross-dependencies among the random variables $\{X_i\}_{i \in [n]}$, such as covariance, skewness, and kurtosis. It is known that $2^n-1$ parameters are required to fully specify this distribution.  
\medskip

\noindent 
We begin with the case $n=2$. We then have  
\begin{align}
&p_{00} = \mathbb{P}\{X_1=0, X_2=0\}, \quad 
p_{10} = \mathbb{P}\{X_1=1, X_2=0\}, \\
&p_{01} = \mathbb{P}\{X_1=0, X_2=1\}, \quad 
p_{11} = \mathbb{P}\{X_1=1, X_2=1\}.
\end{align}
There are three parameters specifiying this distribution, namely  
\begin{align}
   p_1 &= \mathbb{E}(X_1) = \mathbb{P}(X_1=1), \label{eq:p1}\\
   p_2 &= \mathbb{E}(X_2) = \mathbb{P}(X_2=1), \label{eq:p2}\\
   \sigma_{12} &= \mathbb{E}\big[(X_1-p_1)(X_2-p_2)\big]. \label{eq:cov}
\end{align}
Alternatively to $\sigma_{12}$, one can use the cross-moment
\begin{equation}\label{eq:mu12}
    \mu_{12} := \mathbb{E}(X_1X_2) = \sigma_{12} + p_1 p_2 = p_{11}.
\end{equation}
Solving for the probabilities $p_{00}, p_{10}, p_{01},$ and $p_{11}$, we obtain the representations  
\begin{align}
    p_{00} &= q_1 q_2 + \sigma_{12} = 1 - p_1 - p_2 + \mu_{12}, \label{eq:p00}\\
    p_{10} &= p_1 q_2 - \sigma_{12} = p_1 - \mu_{12}, \label{eq:p10}\\
    p_{01} &= q_1 p_2 - \sigma_{12} = p_2 - \mu_{12}, \label{eq:p01}\\
    p_{11} &= p_1 p_2 + \sigma_{12} = \mu_{12}. \label{eq:p11}
\end{align}
It follows that $X_1$ and $X_2$ are independent if and only if $\sigma_{12}=0$. As described in \cite{Teugels1990}, these probabilities admit a compact matrix representation. Define the vector  
\[
    \mathbf{p}^{(2)} = \begin{bmatrix} p_{00} \\ p_{10} \\ p_{01} \\ p_{11} \end{bmatrix}.
\]
Then, one has the relation  
\begin{equation}\label{eq:matrix-repr}
    \mathbf{p}^{(2)} =
    \begin{bmatrix} 1 & -1 & -1 & 1 \\
    0 & 1 & 0 & -1 \\
    0 & 0 & 1 & -1 \\
    0 & 0 & 0 & 1 \end{bmatrix} 
    \begin{bmatrix} 1 \\ p_1 \\ p_2 \\ \mu_{12} \end{bmatrix}.
\end{equation}
This can further be expressed using Kronecker products. In particular, we obtain  
\begin{equation}\label{eq:kronecker2}
    \mathbf{p}^{(2)} = 
    \begin{bmatrix} 1 & -1 \\ 0 & 1 \end{bmatrix} 
    \otimes 
    \begin{bmatrix} 1 & -1 \\ 0 & 1 \end{bmatrix} 
    \begin{pmatrix} 1 & p_1 & p_2 & \mu_{12} \end{pmatrix}^{T}.
\end{equation}

\medskip
\noindent
We now consider the general case. Let $n$ be arbitrary and let $\mathbf{p}^{(n)}$ denote the vector of all $2^n$ probabilities, with components  
\begin{equation}\label{eq:pn-def}
    p_k^{(n)} = p_{k_1,k_2,\ldots,k_n}, 
    \qquad 1\leq k \leq 2^n,
\end{equation}
as defined in \eqref{eq:multivar-bernoulli} and where 
\begin{equation}\label{eq:k-index}
    k = 1 + \sum_{i=1}^n k_i 2^{i-1}, 
    \qquad k_i \in \{0,1\}.
\end{equation}
Introduce $\overline{X}_i = 1-X_i$ for $i=1,2,\ldots,n$. Then  
\begin{equation}\label{eq:pk-repr}
    p_k^{(n)} = \mathbb{P}\left(\bigcap_{i=1}^n \{X_i=k_i\}\right) 
    = \mathbb{E}\left(\prod_{i=1}^n X_i^{k_i} \overline{X}_i^{1-k_i}\right).
\end{equation}
Since for arbitrary $a_i, b_i$ we have  
\[
\left[\begin{bmatrix} a_n \\ b_n \end{bmatrix} \otimes \begin{bmatrix} a_{n-1} \\ b_{n-1} \end{bmatrix} \otimes \cdots \otimes \begin{bmatrix} a_1 \\ b_1 \end{bmatrix}\right]_k 
= \prod_{i=1}^n a_i^{1-k_i} b_i^{k_i}, \quad 1 \leq k \leq 2^n,
\]
we obtain the compact formula  
\begin{equation}\label{eq:pn-kronecker}
\mathbf{p}^{(n)} 
= \mathbb{E}\left[\begin{bmatrix} \overline{X}_n \\ X_n \end{bmatrix} 
\otimes \begin{bmatrix} \overline{X}_{n-1} \\ X_{n-1} \end{bmatrix}
\otimes \cdots 
\otimes \begin{bmatrix} \overline{X}_1 \\ X_1 \end{bmatrix} \right].
\end{equation}
Following \cite{Teugels1990}, we introduce the vector of ordinary moments  
\begin{equation}\label{eq:mu-n}
\mathbf{\mu}^{(n)} = \begin{pmatrix} \mu_1^{(n)} & \mu_2^{(n)} & \cdots & \mu_{2^n}^{(n)} \end{pmatrix}^{T},
\end{equation}
where  
\begin{equation}\label{eq:mu-k}
\mu_k^{(n)} = \mathbb{E}\left(\prod_{i=1}^n X_i^{k_i}\right) 
= \mathbb{E}\left[\begin{bmatrix} 1 \\ X_n \end{bmatrix} \otimes \begin{bmatrix} 1 \\ X_{n-1} \end{bmatrix} \otimes \cdots \otimes \begin{bmatrix} 1 \\ X_1 \end{bmatrix}\right]_k,
\end{equation}
for $k \in \{1, \ldots 2^n\}.$ 

\noindent 
We can now state the following general representation theorem.

\begin{theorem}[Multivariate Bernoulli Representation \cite{Teugels1990}]\label{thm:teugels}
Let $\mathbf{p}^{(n)}$ and $\mathbf{\mu}^{(n)}$, and be as defined in \eqref{eq:pn-kronecker} and \eqref{eq:mu-n} respectively. Then the following representation hold:
\begin{align}
    \mathbf{p}^{(n)} &= \begin{bmatrix} 1 & -1 \\ 0 & 1 \end{bmatrix}^{\otimes n} \mathbf{\mu}^{(n)}. \label{eq:pn-mu}
\end{align}
\end{theorem}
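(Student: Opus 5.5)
The identity will follow by applying the elementary relation $\overline{X}_i = 1 - X_i$ factor by factor inside the Kronecker product in \eqref{eq:pn-kronecker}, and then pulling the resulting constant matrix out of the expectation.

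First, for each $i \in [n]$, write the two-vector appearing in \eqref{eq:pn-kronecker} as a linear image of the corresponding two-vector in \eqref{eq:mu-k}:
\[
\begin{bmatrix} \overline{X}_i \\ X_i \end{bmatrix}
= \begin{bmatrix} 1 & -1 \\ 0 & 1 \end{bmatrix}
\begin{bmatrix} 1 \\ X_i \end{bmatrix}.
\]
This holds identically, pointwise in $\omega$. Denote $M := \begin{bmatrix} 1 & -1 \\ 0 & 1 \end{bmatrix}$.

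Second, apply the mixed-product property of the Kronecker product, $(A_1 v_1)\otimes\cdots\otimes(A_n v_n) = (A_1\otimes\cdots\otimes A_n)(v_1\otimes\cdots\otimes v_n)$. This is proved by a straightforward induction on $n$ from the two-factor case $(A\otimes B)(v\otimes w) = Av\otimes Bw$. It gives
\[
\begin{bmatrix} \overline{X}_n \\ X_n \end{bmatrix}\otimes\cdots\otimes\begin{bmatrix} \overline{X}_1 \\ X_1 \end{bmatrix}
= M^{\otimes n}\left(\begin{bmatrix} 1 \\ X_n \end{bmatrix}\otimes\cdots\otimes\begin{bmatrix} 1 \\ X_1 \end{bmatrix}\right),
\]
again pointwise. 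The ordering of the factors is the same on both sides, so the indexing convention \eqref{eq:k-index} is respected automatically. In particular, no permutation matrix appears, which is the one point deserving care.

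Third, take expectations. The matrix $M^{\otimes n}$ is deterministic, and all entries are bounded (products of indicators), so linearity of expectation applies entrywise. The left side becomes $\mathbf{p}^{(n)}$ by \eqref{eq:pn-kronecker}, and the right side becomes $M^{\otimes n}\mathbf{\mu}^{(n)}$ by \eqref{eq:mu-k}. This yields \eqref{eq:pn-mu}.

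As a sanity check, the case $n=2$ recovers \eqref{eq:matrix-repr} and \eqref{eq:kronecker2}. One caveat: the paper orders the moment vector in \eqref{eq:kronecker2} as $(1,p_1,p_2,\mu_{12})$, which matches the indexing in \eqref{eq:k-index}, since $k=2$ corresponds to $k_1=1$. I expect no real obstacle. The only place requiring attention is verifying that the component formula for iterated Kronecker products, stated just before \eqref{eq:pn-kronecker}, is consistent with the same ordering used in \eqref{eq:mu-k}, so that both vectors are indexed identically.
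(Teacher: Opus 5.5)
Your proof is correct, and it is essentially the standard argument. The paper gives no proof of its own and defers to \cite{Teugels1990}, having set up \eqref{eq:pn-kronecker} and \eqref{eq:mu-k} so that your steps (the factorwise identity $(\overline{X}_i, X_i)^{T} = M(1, X_i)^{T}$, the mixed-product property, and linearity of expectation) give the result directly, with the common factor ordering handled correctly as your $n=2$ check against \eqref{eq:matrix-repr} confirms.
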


\subsection{Some Elements of Graph Limit Theory}
\label{append:GLT}

In this section, we provide some important elements of graph limit theory, developed by \cite{Lovasz2012}.
The following definition formalizes the notion of subgraph counts in a finite graph.

\begin{definition}[Homomorphism densities \cite{Lovasz2012}]
Let \(F\) and \(G\) be two simple graphs.
The set of homomorphisms of \(F\) into \(G\) is denoted by \(\mathrm{Hom}(F,G)\), and we write \(\mathrm{hom}(F,G) := |\mathrm{Hom}(F,G)|\) for its cardinality.

The \emph{homomorphism density} of \(F\) in \(G\) is defined by
\begin{equation}
t(F,G) := \frac{\mathrm{hom}(F,G)}{|V(G)|^{|V(F)|}},
\end{equation}
which is the probability that a uniformly random map \(\phi: V(F) \to V(G)\) is a homomorphism. 
\end{definition}

\begin{remark}
The homomorphism numbers are closely related to the moments of the adjacency matrix with respect to the trace. If \(C_k\) denotes the cycle on \(k\) vertices, then
\[
\mathrm{hom}(C_k, G) = \mathrm{Tr}(A^k) = \sum_{i=1}^{n} \lambda_i^k,
\]
where \(A\) is the adjacency matrix of \(G\) and \(\Lambda'_d, \ldots, \lambda_n\) are its eigenvalues.
\end{remark}

\noindent
We now define homomorphism densities in graphons, which represent the first-order moments of homomorphism densities.

\begin{definition}[Homomorphism densities in graphons \cite{Lovasz2012}]
Let \(\mathcal{W}\) denote the space of all bounded symmetric measurable functions \(W : [0,1]^2 \to [0,1]\). The elements of \(\mathcal{W}\) are called \emph{graphons}. For a simple graph \(F\) and a graphon \(W \in \mathcal{W}\), the homomorphism density of \(F\) in \(W\) is defined by 
\begin{equation}
\label{eq:homodensdef}
t(F, W) := \int_{[0,1]^{|V(F)|}} \prod_{(i,j)\in E(F)} W(x_i,x_j) \, \prod_{i \in V(F)} dx_i
= \mathbb{E}\!\left(\prod_{(i,j)\in E(F)} W(U_i, U_j)\right),
\end{equation}
where \(U_1, U_2, \ldots\) are i.i.d.\ random variables uniformly distributed on \([0,1]\).
\end{definition}

\noindent
Any finite graph can be represented as a piecewise constant function on the unit square. This allows us to introduce the following definition. 

\begin{definition}[Empirical graphon \cite{Lovasz2012}]
\label{def:empiricalgraphon}
Let \(G\) be a finite simple graph with vertex set \(V(G)\). The \emph{empirical graphon} associated with \(G\), denoted by \(W^G\), is defined for $n =|V(G)|$ by
\begin{equation}
W^G(x,y) := \mathbf{1}\!\left\{ \big(\lceil n \cdot x \rceil, \lceil n \cdot y \rceil\big) \in E(G) \right\}, \quad (x,y) \in [0,1]^2.
\end{equation}
Equivalently, \(W^G\) is obtained by partitioning the unit square \([0,1]^2\) into \(n^2\) squares of side length \(1/n\), and setting \(W^G(x,y) = 1\) in the \((i,j)\)-th square if \((i,j) \in E(G)\), and \(W^G(x,y) = 0\) otherwise.
\end{definition}

\noindent
To compare graphons and define convergence of graph sequences, we introduce the cut-distance and cut-metric.

\begin{definition}[Cut-distance \cite{Lovasz2012}]
\label{def:cut-distance}
Let $\mathcal{W}$ be the space of all measurable functions $[0,1]^2 \to [0,1]$ which are symmetric. 
The cut-distance between two graphons $W_1, W_2 \in \mathcal{W}$ is defined as
\[
\|W_1 - W_2\|_{\square} := \sup_{A,B \subseteq [0,1]} \left| \int_{A \times B} \big(W_1(x,y) - W_2(x,y)\big) \, dx\, dy \right|,
\tag{1.7}
\]
for $W_1, W_2 \in \mathcal{W}$.

The cut-metric between two graphons $W_1, W_2 \in \mathcal{W}$ is defined as
\[
\delta_{\square}(W_1, W_2) := \inf_{\varphi} \| W_1 - W_2^{\varphi} \|_{\square},
\]
where the infimum is taken over all measure-preserving bijections $\varphi : [0,1] \to [0,1]$ and $W_2^{\varphi}(x,y) := W_2(\varphi(x),\varphi(y))$.

We say that a sequence of dense graphs $(G_n)$ converges to a graphon $W$ if
\[
\delta_{\square}\big(W^{G_n}, W\big) \xrightarrow[n \to \infty]{a.s} 0,
\]
where $W^{G_n}$ denotes the empirical graphon associated with $G_n$.

\end{definition}

\subsection{The Stochastic Block Model}
\label{append:SBM}

A commonly adopted approach in graphon estimation is to approximate the graphon function $W(x,y)$, for $0 < x,y < 1$, by a Stochastic Block Model (SBM) with $K$ communities \cite{OlhedeWolfe2014}. In an SBM, the probability of an edge between any two nodes depends solely on the communities to which those nodes belong. To capture this structure, one estimates a \emph{connectivity matrix} $\theta$ of size $K \times K$, which contains the probabilities of edge formation both within communities and between different communities.

Given a single adjacency matrix $A$ of size $n \times n$, the graphon $W(x,y)$ can be estimated using an SBM with community size $h$, yielding a network histogram \cite{OlhedeWolfe2014}. We write $n = hK + r$, where $K = \lfloor n/h \rfloor$ is the total number of communities, $h$ is the corresponding community size, and $r$ is a remainder term satisfying $0 \le r < h$. Nodes are grouped into communities according to a membership vector $z$ of length $n$, with each component taking values in $\{1, \dots, K\}$. Let $\mathcal{Z}_K \subseteq \{1, \dots, K\}^n$ denote the set of all possible community membership vectors that assign $h$ nodes to each of the first $K-1$ communities and $h+r$ nodes to the $K$-th community. 

The main challenge in the SBM framework is estimating the community membership vector $z$. Several methods are available, including spectral clustering, likelihood maximization, and modularity optimization. The classical method is the maximum likelihood estimator (MLE) as in \cite{OlhedeWolfe2014}, and the estimator is given by
\begin{equation}
\label{eq:MLE}
\widehat{z} = \arg\max_{z \in \mathcal{Z}_K} \sum_{i<j} \Big[ A_{ij} \log \widehat{\theta}_{z_i z_j} + (1 - A_{ij}) \log (1 - \widehat{\theta}_{z_i z_j}) \Big],
\end{equation}
where the connectivity matrix entries are
\begin{equation}
\label{eq:connect-matrix}
\widehat{\theta}_{ab} = \frac{\sum_{i<j} A_{ij} \, \mathbf{1}(\widehat{z}_i = a) \, \mathbf{1}(\widehat{z}_j = b)}{\sum_{i<j} \mathbf{1}(\widehat{z}_i = a) \, \mathbf{1}(\widehat{z}_j = b)}, \quad 1 \le a,b \le K.
\end{equation}
Each entry $\widehat{\theta}_{ab}$ corresponds to the proportion of edges present among pairs of nodes assigned to communities $a$ and $b$. The network histogram approximation of the graphon is then defined as
\begin{equation}
\widehat{W}(x,y;h) = \widehat{\theta}_{\min\{\lceil nx/h \rceil, K\}, \min\{\lceil ny/h \rceil, K\}}, \quad 0 < x,y < 1.
\end{equation}

Now considering a $2$-layer multiplex network, there are $3$ parameters to estimate : the graphons $W^{(1)}(x,y)$, $W^{(2)}(x,y)$, and the joint graphon $W^{(12)}(x,y)$. The estimation relies on a correlated two-layer SBM, introduced by  \cite{ChandnaOlhedeWolfe2022}, that shares a single community membership vector $z$ and community size $h$. The vector $z$ groups nodes that belong to the same community across both layers, and we let $\mathcal{Z}_K$ denote all possible vectors respecting $n = hK + r$. Each component of $z$ takes values in $[K]$.

The estimation requires three connectivity matrices, each of size $K \times K$: $\theta^{(1)}$ for the first layer, $\theta^{(2)}$ for the second layer, and $\theta^{(12)}$ for the joint probabilities of edge formation across both layers. Following the single-layer case, we the shared community vector $\widehat{z}$ is estimated using the profile MLE \cite{ChandnaOlhedeWolfe2022}:
\begin{align}
\label{eq:profileMLE}
\widehat{z} = \arg\max_{z \in \mathcal{Z}_K} \sum_{i<j} \Big[& 
A_{ij}^{(1)} A_{ij}^{(2)} \log \widehat{\theta}_{z_i z_j}^{(12)} 
+ A_{ij}^{(1)} (1 - A_{ij}^{(2)}) \log (\widehat{\theta}_{z_i z_j}^{(1)} - \widehat{\theta}_{z_i z_j}^{(12)}) \notag \\
&+ (1 - A_{ij}^{(1)}) A_{ij}^{(2)} \log (\widehat{\theta}_{z_i z_j}^{(2)} - \widehat{\theta}_{z_i z_j}^{(12)}) \notag \\ 
&+ (1 - A_{ij}^{(1)}) (1 - A_{ij}^{(2)}) \log (1 - \widehat{\theta}_{z_i z_j}^{(1)} - \widehat{\theta}_{z_i z_j}^{(2)} + \widehat{\theta}_{z_i z_j}^{(12)}) \Big],
\end{align}
where the connectivity matrix entries are defined for $1 \le a,b \le K$ and $d \in \{1,2,(12)\}$ by
\begin{equation}
\label{eq:connect-matrix-2}
\widehat{\theta}_{ab}^{(d)} = \frac{\sum_{i<j} A_{ij}^{(d)} \, \mathbf{1}(\widehat{z}_i = a) \, \mathbf{1}(\widehat{z}_j = b)}{\sum_{i<j} \mathbf{1}(\widehat{z}_i = a) \, \mathbf{1}(\widehat{z}_j = b)}.
\end{equation}
Each entry $\widehat{\theta}^{(d)}_{ab}$ represents the proportion of edges between nodes in communities $a$ and $b$ across the layers of the multiplex (with $d=1$ for the first layer, $d=2$ for the second, and $d=(12)$ for both). The network histogram approximation of the multivariate graphon $W^{(d)}$ is then given by 
\begin{equation}
\label{eq:networkhistogram}
\widehat{W}^{(d)}(x,y;h) = \widehat{\theta}^{(d)}_{\min\{\lceil nx/h \rceil, K\}, \min\{\lceil ny/h \rceil, K\}}, \quad 0 < x,y < 1,  \quad d \in \{1,2,(12)\}.
\end{equation}

\subsection{Orthogonal Decomposition of Generalized $U-$statistics.}
\label{append:Ortho}
Suppose that \(\{U_i : 1 \leq i \leq n\}\) and \(\{Y_{ij} : 1 \leq i < j \leq n\}\) are independent sequences of i.i.d. random variables uniformly distributed on \([0,1]\). Let \(K_n\) denote the complete graph on the vertex set \(\{1, 2, \ldots, n\}\), and let \(G = (V(G), E(G))\) be a subgraph of \(K_n\).  

For any subgraph \(G\), we define \(\mathcal{F}_G\) to be the \(\sigma\)-algebra generated by the collection of variables \(\{U_i\}_{i \in V(G)}\) and \(\{Y_{ij}\}_{(i,j) \in E(G)}\). Let \(L^2(G) := L^2(\mathcal{F}_G)\) denote the space of all square-integrable random variables that are measurable with respect to \(\mathcal{F}_G\).  

Within this space, we consider the following subspace:
\[
M_G := \left\{ Z \in L^2(G) : \mathbb{E}[ZV] = 0 \text{ for every } V \in L^2(F) \text{ with } F \subset G \right\}.
\]
In other words, \(M_G\) consists of all elements in \(L^2(G)\) that are orthogonal to functions measurable with respect to proper subgraphs of \(G\). For the empty graph, \(M_\emptyset\) is simply the space of constant random variables.

The following orthogonal decomposition holds (see \cite{JansonNowicki1991}, Lemma 1):
\begin{equation}
L^2(G) = \bigoplus_{F \subseteq G} M_F,
\label{eq:orthogonal-decomp}
\end{equation}
which expresses \(L^2(G)\) as the orthogonal direct sum of the subspaces \(M_F\) over all subgraphs \(F \subseteq G\). This decomposition allows any function in \(L^2(G)\) to be uniquely expressed as a sum of orthogonal projections onto the \(M_F\) components.

Let \(M\) be a subspace of \(L^2(K_n)\), and denote by \(P_M\) the orthogonal projection operator onto \(M\). The kernel function \(f^{(k)}\) defined in Equation~\eqref{eq:kernel} can be viewed as an element of \(L^2(K_{|V(F)|})\). Applying the decomposition in Equation~\eqref{eq:orthogonal-decomp}, we write:
\begin{equation}
f^{(k)} = \sum_{H \subseteq F} f_H^{(k)},
\label{eq:f-decomp}
\end{equation}
where \(f_H^{(k)} := P_{M_H} f^{(k)}\) is the orthogonal projection of \(f^{(k)}\) onto \(M_H\).

As a result, the U-statistic defined in Equation~\eqref{eq:u-stat} can be decomposed as
\begin{equation}
S_{n,|V(F)|}(f^{(k)}) = \sum_{H \subseteq K_{|V(F)|}} S_{n,|V(F)|}(f_H^{(k)}).
\label{eq:u-decomp}
\end{equation}

Furthermore, for each \(1 \leq s \leq |V(F)|\), define
\begin{equation}
f^{(k)}_{(s)} := \sum_{\substack{H \subseteq F\\ |V(H)| = s}} f_H^{(k)},
\label{eq:principal-degree}
\end{equation}
which is the component of \(f^{(k)}\) supported on subgraphs \(H\) with exactly \(s\) vertices. The smallest integer \(l\) such that \(f^{(k)}_{(l)} \neq 0\) is called the principal degree of \(f^{(k)}\). The asymptotic distribution of \(X_F(G_n^{(k)})\) is governed by this principal degree.

Finally, for any subgraph \(G \subseteq K_n\), the orthogonal projection onto \(L^2(G)\) is given by the conditional expectation:
\begin{equation}
P_{L^2(G)} = \mathbb{E}[\cdot \mid \mathcal{F}_G].
\label{eq:cond-expectation}
\end{equation}
Combining this with the orthogonal decomposition in Equation~\eqref{eq:orthogonal-decomp}, we obtain the identity:
\begin{equation}
P_{L^2(G)} = \sum_{F \subseteq G} P_{M_F}.
\label{eq:proj-sum}
\end{equation}

\end{document}